\def \le {\leqslant}
\def \ge {\geqslant}
\DeclareRobustCommand{\cev}[1]{%
  \mathpalette\do@cev{#1}%
}
\newcommand{\do@cev}[2]{%
  \fix@cev{#1}{+}%
  \reflectbox{$\m@th#1\vec{\reflectbox{$\fix@cev{#1}{-}\m@th#1#2\fix@cev{#1}{+}$}}$}%
  \fix@cev{#1}{-}%
}
\newcommand{\fix@cev}[2]{%
  \ifx#1\displaystyle
    \mkern#23mu
  \else
    \ifx#1\textstyle
      \mkern#23mu
    \else
      \ifx#1\scriptstyle
        \mkern#22mu
      \else
        \mkern#22mu
      \fi
    \fi
  \fi
}
\newcommand{\cc}{\bm{c}}
\newtheorem{theor}{Theorem}
\newtheorem{lem}[theor]{Lemma}
\newtheorem{prop}[theor]{Proposition} 
\newtheorem{defprop}[theor]{Definition/Proposition} 
\theoremstyle{remark}
\newtheorem*{rmk*}{Remark}
\DeclareMathOperator{\N}{\mathbb{N}}
\DeclareMathOperator{\Q}{\mathbb{Q}}
\newcommand{\dk}{10^{-100}}
\author{Dmitry Gayfulin}
        \address{\parbox{\linewidth} {Institute of Analysis and Number Theory, TU Graz, Steyrergasse 30, 8010 Graz, Austria\\
        and\\
        Institute for Information Transmission Problems,
        19 Bolshoy Karetnyi side-str., Moscow 127994, Russia.}
        }
	\email{gamak.57.msk@gmail.com}
\author{Manuel Hauke}
	\address{\parbox{\linewidth} {Department of Mathematics, University of York, 
 York YO10 5DD,
United Kingdom}}
	\email{manuel.hauke@york.ac.uk}
\title{Hausdorff dimension estimates for Sudler products with positive lower bound}
\begin{document}

\maketitle

\begin{abstract}
Given an irrational number $\alpha$, we study the asymptotic behaviour of
the Sudler product denoted by $P_N(\alpha) =
\prod_{r=1}^N  2\lvert \sin \pi r \alpha \rvert$. We show that $\liminf_{N \to \infty} P_N(\alpha) >0$
and $\limsup_{N \to \infty} P_N(\alpha)/N < \infty$ whenever the sequence of partial quotients in the continued
fraction expansion of $\alpha$ exceeds 3 only finitely often, which confirms a conjecture of the second-named author and partially answers a question of J. Shallit. Furthermore, we show that the Hausdorff dimension of the set of those $\alpha$ that satisfy $\limsup_{N \to \infty} P_N(\alpha)/N < \infty,\liminf_{N \to \infty} P_N(\alpha) >0$ lies between $0.7056$
and $0.8677$, which makes significant progress on a question raised by Aistleitner, Technau, and Zafeiropoulos.
\end{abstract}

\section{Introduction and statement of results}

For $\alpha \in \mathbb{R}$ and $N \in \N$, the Sudler product at stage $N$ is defined as
\begin{equation*}
P_N(\alpha) := \prod_{r=1}^{N} 2 \left\lvert \sin \pi r\alpha \right\rvert.
\end{equation*}
Sudler products appear in many different areas of mathematics that include, among others, KAM theory \cite{kam}, restricted partition functions \cite{sudler}, Padé approximants \cite{lubinsky_pade}, almost Mathieu operators \cite{aj, koch}, and invariants of hyperbolic knots 
\cite{quantum_invariants,zag_conj,bd1,bd2,bd3}. \par

Writing $\| P_N \|_{\infty} = \sup\limits_{\alpha \in \mathbb{R}} P_N(\alpha)$, Erd\H os and Szekeres \cite{erdos_szekeres} claimed that the limit
$\lim_{N \to \infty} \| P_N \|_{\infty}^{1/N}$ exists and equals a value between $1$ and $2$, without formally proving it. This was done by Sudler \cite{sudler} and Wright \cite{wright} who showed that
$\lim_{N \to \infty} \| P_N \|_{\infty}^{1/N} = C \approx 1.22$.
Inspired by this, the order of growth of Sudler products was extensively examined from a metric point of view. For more results in this area, we refer
the reader to \cite{atkinson,bbr,bell,bglly,bc,fh,kol,kol2}.\\

The object of interest in this article is the pointwise behaviour of the Sudler product, which has also gained a lot of interest in recent years. Here, $\alpha \in \mathbb{R}$ is fixed and the behaviour of 
$P_N(\alpha)$ when $N \to \infty$ is examined. Observe that by $1$-periodicity and the fact that for $\alpha = \frac{p}{q} \in \Q$ we have $P_N(\alpha) = 0$ for $N \ge q$, the asymptotic analysis can be restricted to irrational numbers $\alpha \in [0,1)$.
A central object in this setting is given by the question of characterizing the set
\[S := \left\{\alpha \in [0,1): \liminf_{N \to \infty} P_N(\alpha) >0\right\}.\]
First studied by Erd\H os and Szekeres \cite{erdos_szekeres}, 
it was proven that $S$ forms a set of Lebesgue measure $0$. This was improved by Lubinksy \cite{lubinsky} who was the first to develop a connection between the asymptotic behaviour of the Sudler product and the Diophantine properties of $\alpha$. Thereby, he showed a certain rate of convergence of $\liminf_{N \to \infty} P_N(\alpha)$ to $0$ for almost every $\alpha$; for further results for almost all irrationals see e.g. \cite{borda,hauke_density}. Furthermore, Lubinsky proved that $S$ is contained in the set of badly approximable numbers.
By a careful analysis of Lubinsky's proof, it was recovered  in \cite{grepstad_survey} that there is actually a finite cutoff value $K \approx e^{800}$ such that (when writing $\alpha = [0;a_1,a_2,\ldots]$ in its continued fraction expansion),
 \[S \subseteq \tilde{E}_K := \{\alpha \in [0,1): \limsup_{i \to \infty} a_i(\alpha) \le K\}.\]
Lubinsky conjectured that $S$ is actually empty, which was disproven by Verschueren \cite{versch} and independently Grepstad, Kaltenböck and Neumüller \cite{grepstad_neum}, as they 
proved that the Golden Ratio $\phi = [0;1,1,1,\ldots] \in S$.\par
This result was generalized by Aistleitner, Technau, and Zafeiropoulos \cite{tech_zaf} who showed that for $\beta(b) := [0;b,b,b,\ldots]$, we have
$\beta(b) \in S \Leftrightarrow b \le 5.$
At the end of the article, they raised the question of characterizing those irrationals whose Sudler product grows at most linearly. This is known to be equivalent to characterizing $S$ since it can be deduced from the work of Aistleitner and Borda \cite{quantum_invariants} that for any irrational $\alpha$,

\begin{equation}\label{liminf_limsup_equiv}\liminf_{N \to \infty} P_N(\alpha) = 0 \; \Longleftrightarrow \; \limsup_{N \to \infty} \frac{P_N(\alpha)}{N} = \infty.\end{equation}
\par 

In recent articles, significant progress was made in providing sufficient conditions under which an irrational does \textit{not} lie in $S$. Grepstad, Neumüller and Zafeiropoulus \cite{grepstadII} showed that $\alpha \notin S$ if $\alpha$ is a quadratic irrational with $\limsup_{n \to \infty} a_n \ge 23$, conjecturing that already 
$\limsup_{n \to \infty} a_n \ge 6$ suffices. This turned out to be false in a recent paper of the second author \cite{hauke_badly}: generalizing the method of proof to arbitrary badly approximable numbers, the value $23$ was decreased to the optimal bound, showing that
\begin{equation}\label{hauke_estim}
S \subseteq \tilde{E}_6, \quad S \not\subseteq \tilde{E}_5,
\end{equation}
where
\[\tilde{E}_k:= \{\alpha \in [0,1): \limsup_{i \to \infty} a_i(\alpha) \le k\}.\]

The current article aims to enlarge the set of known irrationals $\alpha$ where 
$\liminf_{N \to \infty} P_N(\alpha) > 0$ holds. Note that up to now, there is no single non-quadratic irrational known to lie in $S$. This is due to the fact that all proofs given so far heavily use the existence of periodicity in the continued fraction expansion of quadratic irrationals. However, based on numerical evidence, it was conjectured in \cite{hauke_badly} that 
$\tilde{E}_3 \subseteq S$. As a main result of this paper, we confirm this conjecture:

\begin{theor}
\label{thm_main}
Let $\alpha$ be an irrational number with its continued fraction expansion 
$\alpha=[0;a_1,a_2,\ldots]
$ satisfying $\limsup_{i \to \infty} a_i \le 3$.
Then we have
$$
\liminf\limits_{N\to\infty}P_N(\alpha)>0, \quad
\limsup_{N \to \infty} \frac{P_N(\alpha)}{N} < \infty.
$$
\end{theor}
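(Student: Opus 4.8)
The plan is to reduce the statement to a quantitative, self-improving estimate along the continued fraction expansion. Write $\alpha = [0;a_1,a_2,\ldots]$ with $a_i \le 3$ for all $i \ge i_0$, and let $q_k$ denote the denominators of the convergents. Since $P_N(\alpha)$ for general $N$ can be controlled by the values $P_{q_k}(\alpha)$ together with the ``local'' behaviour between consecutive convergents (this is the standard mechanism from \cite{grepstad_neum}, \cite{tech_zaf}, \cite{hauke_badly}), it suffices to prove that the subsequence $P_{q_k}(\alpha)$ stays bounded away from $0$ and from $\infty$, with explicit uniform constants, and that the intermediate products $P_N(\alpha)/P_{q_{k}}(\alpha)$ for $q_k \le N < q_{k+1}$ are controlled by the same constants. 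The core object is therefore the renormalization: one expresses $\log P_{q_{k+1}}(\alpha)$ in terms of $\log P_{q_k}(\alpha)$, $\log P_{q_{k-1}}(\alpha)$, the partial quotient $a_{k+1}$, and the tail $\alpha_{k+1} = [0;a_{k+2},a_{k+3},\ldots]$, via a decomposition of the index range $\{1,\ldots,q_{k+1}\}$ into residue classes modulo $q_k$ and a comparison of $2\lvert \sin \pi r\alpha\rvert$ with its value at the nearest multiple of $\|q_k\alpha\|$.

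The key steps, in order, are as follows. First, establish a \emph{three-distance-type decomposition}: for $q_k \le N < q_{k+1}$, factor $P_N(\alpha)$ as $P_{q_k}(\alpha)$ times a product over at most $a_{k+1}$ blocks, each of which is (up to a uniformly bounded multiplicative error) a Sudler-type product of length $\le q_k$ evaluated at the shifted point. This gives a recursive inequality of the form $c^{-1} \le P_{q_{k+1}}(\alpha)/\big(P_{q_k}(\alpha)\, F(a_{k+1},\alpha_{k+1})\big) \le c$ for an explicit continuous function $F$ and explicit $c$. Second, analyze the \emph{transfer operator / cocycle} generated by $(a_i)$: since eventually every $a_i \in \{1,2,3\}$, there are only finitely many ``letters'', and one must show that every finite word in $\{1,2,3\}^*$ — or at least every sufficiently long one — contributes a net multiplicative factor that keeps the running product inside a fixed compact interval $[m,M] \subset (0,\infty)$. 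Concretely, one identifies a quantity like $(\log P_{q_k}(\alpha), \log P_{q_{k-1}}(\alpha))$ lying in an invariant region $\mathcal{R} \subset \R^2$ and checks that each admissible letter maps $\mathcal{R}$ into itself; this is a finite (if delicate) verification because the alphabet is finite. Third, combine the invariance of $\mathcal{R}$ with the first step to conclude boundedness of $P_N(\alpha)$ for \emph{all} $N$, not just along convergents, and then read off $\liminf P_N(\alpha) > 0$ and $\limsup P_N(\alpha)/N < \infty$ (the latter using \eqref{liminf_limsup_equiv}, or directly from the block decomposition, since each block contributes a length factor at most linear in $q_k$).

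The main obstacle is the second step: proving the existence of a bounded invariant region for the renormalization cocycle over the alphabet $\{1,2,3\}$. Unlike the purely periodic case $[0;b,b,b,\ldots]$ treated in \cite{tech_zaf}, here the word is \emph{arbitrary} over three letters, so one cannot simply study a single fixed point of one renormalization map; instead one needs the analogue of a \emph{joint spectral radius / common invariant cone} argument, showing that no admissible concatenation of the letter-maps can drive $\log P_{q_k}(\alpha)$ to $-\infty$ (which would kill the $\liminf$) or to $+\infty$ faster than linearly. I expect this to require: (i) sharp two-sided estimates on the function $F(a,\cdot)$ and its derivative for $a \in \{1,2,3\}$ and argument in the relevant range of tails (themselves continued fractions with partial quotients $\le 3$, hence $\alpha_{k} \in [\,[0;\overline{3}],\,[0;\overline{1}]\,] = [\,(\sqrt{13}-3)/2,\;(\sqrt5-1)/2\,]$ eventually), and (ii) a Lyapunov-function or explicit-region argument showing contraction toward $\mathcal{R}$. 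The finitely-many-initial-large-partial-quotients caveat is harmless: those only change $P_N$ by a fixed finite factor, so the asymptotic statements are unaffected once the tail is handled.
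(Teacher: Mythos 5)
Your overall strategy is recognizably the same family as the paper's (decompose $P_N$ along the continued fraction/Ostrowski structure, reduce to a finite-alphabet problem, verify finitely many inequalities), but the proposal has a genuine gap: the entire second step — the existence of a bounded invariant region $\mathcal{R}$ for your renormalization cocycle — is only asserted as something you ``expect'' to be provable by a finite verification, and this is precisely the mathematical content of the theorem. Two concrete problems arise. First, your state variable $(\log P_{q_k},\log P_{q_{k-1}})$ is not adequate: in the exact decomposition (Proposition \ref{prop_shifted}) the factors are \emph{perturbed} products $P_{q_i}(\alpha,\varepsilon_{i,t}(N))$, and the perturbations $\varepsilon_{i,t}(N)$ depend on the \emph{higher} Ostrowski digits $b_{i+1},b_{i+2},\ldots$ of the particular $N$ under consideration, not only on data up to index $k$. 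Controlling $\liminf_N P_N(\alpha)$ over \emph{all} $N$ therefore cannot be reduced to a two-dimensional cocycle driven by the letters $a_{k+1}\in\{1,2,3\}$ alone; the paper instead classifies factors by nine consecutive partial quotients and by the pattern of nearby Ostrowski digits, and the decisive phenomenon is that factors with negative perturbation genuinely drop below $1$ (as low as roughly $0.35$, cf. \eqref{mainineq_negpert_}) and are only compensated by the product of up to three subsequent non-zero-digit factors (\eqref{mainineq_no_large_negative_}); an invariant-region argument that looks at one letter at a time would not see this compensation. Second, your proposed recursive inequality $c^{-1}\le P_{q_{k+1}}/\bigl(P_{q_k}F(a_{k+1},\alpha_{k+1})\bigr)\le c$ with a fixed multiplicative error $c$ is too lossy: such errors accumulate geometrically in $k$ and destroy any boundedness conclusion. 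The paper avoids this by using an \emph{exact} identity (Proposition \ref{prop_shifted}) and a limit-function approximation $P_{q_k}(\alpha,\varepsilon)\approx H_k(\alpha,\varepsilon)$ with error $o(1)$ (Proposition \ref{limit_H}), so that the surviving factors are bounded below by quantities strictly exceeding $1$ (e.g.\ $1.00001$) and the error terms are absorbed.

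What the actual proof supplies, and your sketch does not, is the construction of the $3^9$ explicit pseudo-concave lower-bound functions $F_{\cc}$ (Lemma \ref{new_idea_lem}), including the Ostrowski-type estimate \eqref{Ostr_est} and the interval-subdivision algorithm needed to make the tail sums computable, together with the computer-assisted case study of Section \ref{case_study} verifying \eqref{mainineq_negpert_}--\eqref{mainineq_no_large_negative_} over all admissible digit patterns, and finally the induction on the number of non-zero Ostrowski digits that converts these local inequalities into $\liminf_N P_N(\alpha)>0$ (the $\limsup$ statement then follows from \eqref{liminf_limsup_equiv}, as you note). Without an actual proof of your step 2 — or an argument replacing the paper's compensation mechanism for negative perturbations — the proposal remains a plausible roadmap rather than a proof.
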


As an immediate corollary of Theorem \ref{thm_main}, we obtain non-trivial (upper and lower) Hausdorff dimension estimates for the set $S$:

\begin{theor}
\label{thm_hausdorff}
Let $S :=
\left\{\alpha \in [0,1): \liminf\limits_{N\to\infty}P_N(\alpha)>0, \limsup_{N \to \infty} \frac{P_N(\alpha)}{N} < \infty\right\}.
$
Then we have
\[
0.7056 \le \dim_H(S) \le 0.8677.
\]
\end{theor}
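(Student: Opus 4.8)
The plan is to deduce Theorem~\ref{thm_hausdorff} entirely from Theorem~\ref{thm_main} and the inclusion $S\subseteq\tilde E_6$ of \eqref{hauke_estim}, by reducing the computation of $\dim_H(S)$ to the well-studied Hausdorff dimension of bounded-type continued fraction sets
\[
E_k:=\{[0;b_1,b_2,\ldots]\in[0,1):\ b_j\le k\text{ for all }j\ge 1\}.
\]
First I would record the sandwich $\tilde E_3\subseteq S\subseteq\tilde E_6$: the left inclusion holds because every $\alpha\in\tilde E_3$ satisfies $\limsup_i a_i\le 3$, so Theorem~\ref{thm_main} applies, and the right inclusion is \eqref{hauke_estim}. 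Monotonicity of Hausdorff dimension then gives $\dim_H\tilde E_3\le\dim_H(S)\le\dim_H\tilde E_6$.

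Next I would show $\dim_H\tilde E_k=\dim_H E_k$ for every $k$. Since the $a_i$ are positive integers, $\limsup_i a_i\le k$ is equivalent to $a_i\le k$ for all sufficiently large $i$, whence $\tilde E_k=\bigcup_{n\ge 0}\ \bigcup_{(a_1,\ldots,a_n)\in\N^n}\varphi_{a_1,\ldots,a_n}(E_k)$, where $\varphi_{a_1,\ldots,a_n}(\beta)=[0;a_1,\ldots,a_n,\beta]$. Each $\varphi_{a_1,\ldots,a_n}$ is a Möbius transformation with non-vanishing derivative on $[0,1]$, hence bi-Lipschitz there, so it preserves Hausdorff dimension; countable stability of $\dim_H$ then yields $\dim_H\tilde E_k=\dim_H E_k$. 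Together with the previous step this reduces Theorem~\ref{thm_hausdorff} to the two rigorous numerical estimates
\[
\dim_H E_3\ge 0.7056,\qquad \dim_H E_6\le 0.8677.
\]

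To prove these I would use the thermodynamic formalism for the conformal iterated function system $\{T_j(x)=1/(j+x):1\le j\le k\}$ whose attractor is $E_k$. By Bowen's formula (Mauldin--Urbański), $\dim_H E_k$ is the unique $s$ for which the topological pressure $P(s)=\lim_{n\to\infty}\frac1n\log\sum_{\omega\in\{1,\ldots,k\}^n}\sup_x|T_\omega'(x)|^{s}$ vanishes, equivalently for which the leading eigenvalue of the transfer operator $(\mathcal L_s f)(x)=\sum_{j=1}^k(j+x)^{-2s}f(1/(j+x))$ equals $1$. For the upper bound it suffices, using that $\Phi_n(s):=\sum_{\omega\in\{1,\ldots,6\}^n}\operatorname{diam}(I_\omega)^s$ is submultiplicative up to a bounded-distortion constant, to exhibit a single level $n$ for which $\Phi_n(0.8677)$ is small enough that $\mathcal H^{0.8677}(E_6)=0$. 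For the lower bound I would instead estimate from below the finite-level quantities $\widetilde s_n$ solving $\sum_{\omega\in\{1,2,3\}^n}\inf_x|T_\omega'(x)|^{\,\widetilde s_n}=1$, which satisfy $\widetilde s_n\le\dim_H E_3$ and increase to it (equivalently, one can build a Bernoulli--Frostman measure on $E_3$ with $\mu(I_\omega)\le C\,\operatorname{diam}(I_\omega)^{0.7056}$). The main obstacle is making these transfer-operator computations genuinely rigorous: the relevant sums and eigenvalues converge only geometrically, so direct enumeration of words is infeasible, and one must instead use the transfer-operator recursion together with an error-controlled spectral (collocation) approximation of $\mathcal L_s$ and interval arithmetic, in the spirit of the rigorous Hausdorff-dimension computations for continued-fraction Cantor sets due to Jenkinson--Pollicott and Falk--Nussbaum; alternatively, one can simply cite the rigorous bounds on $\dim_H E_3$ and $\dim_H E_6$ available in that literature.
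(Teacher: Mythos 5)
Your proposal is correct and follows essentially the same route as the paper: the sandwich $\tilde E_3\subseteq S\subseteq\tilde E_6$ from Theorem~\ref{thm_main} and \eqref{hauke_estim}, the reduction $\dim_H\tilde E_k=\dim_H E_k$ by countable stability under bi-Lipschitz (finite prefix) maps, and then the known rigorous numerical bounds on $\dim_H E_3$ and $\dim_H E_6$ (the paper simply cites Jenkinson and Pollicott--Vytnova, which is the ``alternatively'' branch of your last step). The transfer-operator sketch you add is not needed beyond those citations.
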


\begin{proof}[Proof of Theorem \ref{thm_hausdorff}, assuming Theorem \ref{thm_main}]
By Theorem \ref{thm_main} and \eqref{hauke_estim}, we have 
$\tilde{E}_3 \subseteq S \subseteq \tilde{E}_6$.
By standard arguments (taking a countable union of sets of the same Hausdorff dimension), we obtain
$\dim_H(\tilde{E}_k) =  \dim_H(E_k)$ 
where 
\[E_k := \{\alpha \in [0,1): \max_{i \in \N} a_i(\alpha) \le k\}.\]
Using estimates on $\dim_H(E_k)$ (see e.g. \cite[Table 1]{jenk} and \cite[Theorem 4.22]{povy}), we obtain
\[0.7056 \le \dim_H(E_3) \le \dim_H(S) \le  \dim_H(\tilde{E}_6) \le 0.8677.\]
\end{proof}

Here and in preceding results on the characterization of $S$, it seems that the question of $\alpha \in S$ does not depend on its first finitely many partial quotients, or in other words,
that $S$ is invariant under the Gauss map $T(x) :=\left\{\frac{1}{x}\right\}$.
However, to the best knowledge of the authors, this was never proven explicitly, thus we provide a proof of this fact.

\begin{theor}
\label{thm_Gauss}
Let $\alpha\in \mathbb{R}$ and $n \in \mathbb{N}$ arbitrary. Then 
\[\alpha \in S \Leftrightarrow T^n(\alpha)\in S.\]
\end{theor}

\subsection*{Remarks on the Theorems and directions for further research}

\begin{itemize}
\item Since all the elements in $S$ known so far were quadratic irrationals, $S$ was not even known to be uncountable. 
Actually, by considering the (in view of Theorem \ref{thm_Gauss}) natural equivalence relation given by $\alpha \sim \beta \iff 
T^{m}(\alpha) = T^n(\beta), m,n \in \mathbb{N}$,
there were only seven examples (see \cite{tech_zaf,hauke_badly}) known to lie in $S$.
Hence, Theorem \ref{thm_hausdorff} is a significant improvement to the question raised in \cite{tech_zaf} by not only showing uncountability but giving nontrivial estimates on the Hausdorff dimension.
\item In terms of the value of $K(\alpha):= \limsup_{i \to \infty} a_i$, Theorem \ref{thm_main} provides a sharp threshold, thus completing the best possible description of $\liminf_{N \to \infty} P_N(\alpha)$ in terms of $K(\alpha)$: If $K(\alpha) \le 3$, we have $\liminf_{N \to \infty} P_N(\alpha) > 0$, if $K(\alpha) \ge 7$, we have $\liminf_{N \to \infty} P_N(\alpha) = 0$. If
$K(\alpha) \in \{4,5,6\}$, the value of $K(\alpha)$ does not determine whether $\alpha \in S$: for each value $4,5,6$, there exist irrationals $\alpha,\beta$ with $K(\alpha) = K(\beta)$ with $\liminf_{N \to \infty} P_N(\alpha) > 0$ and $\liminf_{N \to \infty} P_N(\beta) = 0$ (for examples, see \cite{tech_zaf} and \cite{hauke_badly}) .
\item For integers $u \ge 2$, the numbers $\alpha_u := \sum_{k = 0}^{\infty} \frac{1}{u^{2^k-1}}$
are known to be transcendental, with their partial quotients bounded by $u$.
These numbers are known as examples of badly approximable numbers that are not quadratic irrationals,
but their corresponding continued fraction expansions can still be described by a relatively simple pattern, see \cite{shallit}.
The second-named author was asked by J. Shallit in private communication whether 
something can be said about the question for which $u$, $\alpha_u \in S$. By Theorem \ref{thm_main} we know that $\alpha_2,\alpha_3 \in S$ and by \eqref{hauke_estim}, $\alpha_u \notin S$ for $u \ge 7$. Again, the question of whether $\alpha_u \in S$ for $u \in \{4,5,6\}$ remains open, but the question might be easier to approach than proving statements for arbitrary non-quadratic badly approximable numbers since the continued fraction expansion follows a particular pattern.
\item Most probably, both the upper and lower bounds for the Hausdorff dimension can be improved by considering different criteria when $K(\alpha) \in \{4,5,6\}$. While some patterns like $(1,6,1)$ appearing infinitely often in the continued fraction expansion probably lead to $\liminf_{N \to \infty} P_N(\alpha) = 0$, the approach considered in this paper has its limitations when it comes to determining the \textit{precise} Hausdorff dimension, let alone complete characterization for the set $S$. To determine this, we expect a genuinely new approach to be needed. 
\item In this paper we prove a \textit{pointwise} lower bound for $P_N(\alpha)$ when $\alpha \in \tilde{E}_3$. Having particularly large numbers in the first finitely many partial quotients seems to decrease the value of $\liminf_{N \to \infty} P_N(\alpha)$ more and more, hence most probably,
\[
\inf_{\alpha \in \tilde{E}_3}\liminf_{N \to \infty} P_N(\alpha) = 0.
\]
However, 
we actually show a \textit{uniform} lower bound for $E_3$, i.e.
\begin{equation}
    \label{uniform_bound}
\inf_{\alpha \in E_3}\liminf_{N \to \infty} P_N(\alpha) = c > 0.
\end{equation}

It would be interesting to determine the value of $c$ or to characterize a minimizing
subsequence along which the $\liminf$ is obtained for fixed $\alpha$. In \cite{hauke_extreme}, the actual value of $\liminf_{N \to \infty} P_N([0;1,1,1,\ldots])$ was determined, 
with a minimizing subsequence being the Fibonacci numbers, that is, in the corresponding Ostrowski expansion (see Section \ref{prereq}), we only have one non-zero Ostrowski digit. The method of proof reveals that for any $\alpha \in E_3$, every minimizing subsequence can only have boundedly many non-zero Ostrowski digits, but the particular structure of a minimizing subsequence remains unclear in the more complicated setting of arbitrary badly approximable numbers. 

\item From a technical perspective, 
there are two obstacles to circumvent in order to prove Theorem \ref{thm_main}. Firstly, showing $\liminf_{N \to \infty} P_N(\alpha) = 0$ is much easier than proving $\liminf_{N \to \infty} P_N(\alpha) > 0$ since it suffices to consider a suitable subsequence, instead of proving a uniform positive lower bound for \textit{all} indices. For this, our method follows a refined version of the ideas established in \cite{tech_zaf} for quadratic irrationals.
Secondly, analyzing arbitrary badly approximable numbers instead of quadratic irrationals is a tougher challenge since there is no periodicity to exploit. For this, we borrow ideas from \cite{hauke_badly}. The proof of Theorem \ref{thm_main} combines these two methods, together with new ideas and an extensive case distinction.
\end{itemize}

\subsection*{Structure of the paper}
The rest of the article is structured as follows. In Section \ref{prereq}, we recall elementary facts 
as well as results from various preceding articles on Sudler products that will be applied later. In Section \ref{main_ideas}, we present the main idea, and prove how Theorem \ref{thm_main} can be deduced from the two main lemmas in this article (Lemmas \ref{new_idea_lem} and \ref{main_techn_lem_variant}). In Section \ref{Hk_section}, we construct a family of functions that plays a key role in this paper and prove Lemma \ref{new_idea_lem}. Finally, in Section \ref{case_study}, we compute various estimates for the functions constructed in Section \ref{Hk_section} and prove Lemma \ref{main_techn_lem_variant}. The proof of Theorem \ref{thm_Gauss} is quite independent from our main argument, and therefore we put it into the Appendix.

\section{Some notation and prerequisites}\label{prereq}
\subsection*{Notation}
Throughout the text, we use the standard O- and o- notations and write
$f(n) \ll g(n), f(n) \gg g(n)$ if $f(n) = O(g(n))$ and $g(n) = O(f(n))$ respectively.
If $f(n) \ll g(n) \ll f(n)$, we write
$f(n) \asymp g(n)$.
Given a real number $x\in \mathbb{R},$ we write $\{x\} := x - \lfloor x \rfloor$ for the fractional part of $x$, and $\|x\| :=\min\{|x-k|: k\in\mathbb{N}\}$ for the distance of $x$ to the nearest integer.

\subsection*{Concavity properties}
In the subsequent proofs, we will make use of several elementary properties.
Recall that a function $f$ is called \textit{log-concave} when $\log(f)$ is concave. Further, a function is called \textit{pseudo-concave} on $[a,b]$ when for every interval
$[c,d] \subseteq [a,b]$ and any $c \le x \le d$ we have
\begin{equation*}
    f(x) \ge \min\{f(c),f(d)\}.
\end{equation*}
We recall the following well-known facts:

\begin{itemize}
    \item If $f$ is concave and strictly positive, then $f$ is log-concave.
    \item If $f$ is log-concave, then $f$ is pseudo-concave.
    \item Products of log-concave functions are log-concave.
\end{itemize}

\subsection*{Continued fractions and Ostrowski representation}
Here we recall some basic
facts about continued fractions that we use in this article. For a more detailed background, we refer to classical literature e.g. \cite{all_shall,khinchin, rock_sz,schmidt}.
Any real number $\alpha$ can be represented as a continued fraction expansion
\begin{equation}
\label{xexp}
\alpha=[a_0;a_1,a_2,\ldots]=a_0+ \cfrac{1}{a_1+\cfrac{1}{a_2+\cdots}},
\end{equation}
where $a_0\in\mathbb{Z}$ and $a_i\in\mathbb{Z}_{+}$ for $i\ge 1$. This representation is finite for rational $\alpha$, and infinite and unique for irrational $\alpha$.
We call an irrational number $\alpha$ \textit{badly approximable} if $\limsup_{n \to \infty} a_n(\alpha) < \infty$. 
The convergents of a continued fraction are the rational numbers 
$$
\frac{p_k}{q_k}:=[a_0;a_1,a_2,\ldots,a_k]
$$
whose denominators and numerators satisfy 
$p_0 = a_0,\; p_1 = a_1a_0 +1,\; q_0 = 1,\; q_1 = a_1$ and for $k \ge 1$ the recurrence law
$$
p_{k+1}(\alpha)=a_{k+1}(\alpha)p_k(\alpha)+p_{k-1}(\alpha),\ \ q_{k+1}(\alpha)=a_{k+1}(\alpha)q_k(\alpha)+q_{k-1}(\alpha).
$$
Throughout this article, we will omit the dependency of $a_k$, $p_k$, and $q_k$ on $\alpha$ when it does not create ambiguity. Defining 
\begin{equation}
\label{alpha_delta_def}
    \delta_k := \| q_k \alpha\|,\quad 
\alpha_{k} := [a_{k};a_{k+1},a_{k+2},\ldots], \quad
\cev{\alpha}_k := [0;a_k,a_{k-1},a_{k-2},\ldots,a_1],\quad \lambda_k:=q_k\delta_k, \quad k \ge 1,
\end{equation}
 we straightforwardly obtain 

\begin{align}
\label{alternating_approx}
    q_k\alpha 
&\equiv (-1)^k\delta_k \pmod 1,
\\\lambda_k &= \frac{1}{\alpha_{k+1}+ \cev{\alpha}_{k}},\\
\label{deltaratio} 
\frac{\delta_{k+2}}{\delta_k}&<\frac{1}{2},\\
\label{deltarec}
\delta_{k+1}&=\delta_{k-1}-a_{k+1}\delta_k\le \delta_{k-1}-\delta_k,\\
\label{deltasum} 
\delta_k&=\sum\limits_{t=1}^{\infty}a_{k+2t}\delta_{k+2t-1}.
\end{align}
For each $j\ge 1$ denote
\begin{equation}
\label{lambda_kj_def}
\lambda_{k,j}=q_k\delta_{k+j}=\frac{q_k}{q_{k+j}}\lambda_{k+j}.
\end{equation}
In this article, we will frequently estimate $\lambda_{k,j}$ when the partial quotients $(a_k,\ldots,a_{k+j}$) are fixed, allowing us to get reasonably sharp estimates on the ratio $q_k/q_{k+j}$ by using \textit{continuants}. For any finite (possibly empty) sequence of positive integers $C=(c_1,\ldots,c_t)$, we define the continuant $\langle C\rangle$ as follows:
\begin{enumerate}
\item{If $C$ is an empty sequence, $\langle C\rangle=1$.}
\item{If $C=(c_1)$, then $\langle c_1\rangle=c_1$.}
\item{If the length of $C$ is greater than $1$, then 
\begin{equation*}
\langle c_1,c_2,\ldots,c_t\rangle=c_t\langle c_1,c_2,\ldots,c_{t-1}\rangle+\langle c_1,c_2,\ldots,c_{t-2}\rangle.
\end{equation*}
}
\end{enumerate}
One can easily deduce from the definitions that
\begin{align*}
[0;a_1,a_2,\ldots,a_t]&=\frac{\langle a_2,\ldots,a_t\rangle}{\langle a_1,a_2,\ldots,a_t\rangle},\\
\langle a_1,\ldots,a_k,a_{k+1},\ldots,a_s\rangle&=\langle a_1,\ldots,a_{k}\rangle\langle a_{k+1},\ldots,a_{s}\rangle+
\langle a_1,\ldots,a_{k-1}\rangle\langle a_{k+2},\ldots,a_{s}\rangle,\\  
\frac{q_{k+j}}{q_k}&=\langle a_{k+1},a_{k+2},\ldots,a_{k+j}\rangle+\cev{\alpha}_k\langle a_{k+2},a_{k+3},\ldots,a_{k+j}\rangle,\quad j \ge 1.
\end{align*}
Fixing an irrational $\alpha = [a_0;a_1,a_2,\ldots]$, the Ostrowski expansion of a non-negative integer $N$ is the unique representation

\begin{equation*}
N = \sum_{\ell = 0}^k b_{\ell}q_{\ell}, \quad \text{ where }
b_{k} \neq 0,\;\; 0 \le b_0 < a_1, \;\;  0 \le b_{\ell} \le a_{\ell+1} \text { for } \ell \ge 1,
\end{equation*}
with the additional rule that
$b_{\ell-1} = 0$ whenever $b_{\ell} = a_{\ell+1}$.
If $\alpha = [a_0;a_1,\ldots,a_{k}] = p_k/q_k$ is a rational number, then we define for
$N < q_k$ the Ostrowski expansion in the same way as in the irrational setting.\\

We conclude this subsection with the following estimate of Ostrowski \cite{ostrowski}:
Let $\ell > 10$ an integer, $\alpha = [a_0;a_1,a_2,\ldots]$ irrational or $\alpha = \frac{p_k}{q_k}$
with $\ell < q_k$. Then 

\begin{equation}
\label{Ostr_est}
\left|\sum\limits_{n=1}^{\ell}1/2-\{n\alpha\}\right|\le\frac{3}{2}\log{\ell}\max_{i \le k}a_i(\alpha).
\end{equation}

\subsection*{Perturbed Sudler products and convergence of the Sudler product along subsequences}
The proof of Theorem \ref{thm_main} relies on a decomposition approach that was implicitly used in \cite{grepstad_neum} to prove $\liminf\limits_{N \to \infty}P_N(\phi) > 0$, and more explicitly in even more recent articles such as \cite{quantum_invariants,zag_conj,other_aist_borda,tech_zaf,grepstadII,hauke_density,hauke_extreme,hauke_badly}. The Sudler product $P_N(\alpha)$ is
decomposed into a finite product with factors of the form

 \begin{equation*}
P_{q_n}(\alpha,\varepsilon) := \prod_{r=1}^{q_n} 2 \left\lvert \sin\Big(\pi\Big(r\alpha + (-1)^{n}\frac{\varepsilon}{q_n}\Big)\Big)\right\rvert,\quad \varepsilon \in \mathbb{R}
\end{equation*}
We state the fact in the formulation of \cite[Proposition 4]{hauke_badly}:
\begin{prop}\label{prop_shifted}
Let $\alpha$ be a fixed irrational and let $N = \sum_{i=0}^{n} b_{i}q_i(\alpha)$ be the Ostrowski expansion of an arbitrary integer $q_n \le N < q_{n+1}$.
For $0 \le i \le n$ and $t \in \mathbb{N}$, we define
\begin{equation*}
\varepsilon_{i,t}(N) = \varepsilon_{i,t}(\alpha,N) := q_i\left(t\delta_i + \sum_{j=1}^{n-i} (-1)^jb_{i+j} \delta_{i+j}\right).
\end{equation*}
Then we have

\begin{equation*}
    P_N(\alpha) = \prod_{i=0}^{n}\prod_{t= 0}^{b_i-1} P_{q_i}\bigl(\alpha,\varepsilon_{i,t}(N)\bigr).
\end{equation*}
\end{prop}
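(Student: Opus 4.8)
The plan is to group the $N$ factors of $P_N(\alpha)=\prod_{r=1}^{N}2\lvert\sin\pi r\alpha\rvert$ into consecutive blocks whose lengths are prescribed by the Ostrowski digits $b_0,\ldots,b_n$ of $N$, and then to combine the $1$-periodicity of $x\mapsto 2\lvert\sin\pi x\rvert$ with \eqref{alternating_approx} in order to identify the product over each block with one perturbed factor $P_{q_i}(\alpha,\cdot)$. All the substance of the statement lies in computing, modulo $1$, the amount by which the argument of $\sin$ gets shifted on each block.

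First I would fix the ordering of the blocks: list the $b_n$ blocks of length $q_n$, then the $b_{n-1}$ blocks of length $q_{n-1}$, and so on, ending with the $b_0$ blocks of length $q_0=1$. Since $N=\sum_{i=0}^{n}b_iq_i$, these blocks partition $\{1,\ldots,N\}$ into consecutive runs of integers; the block labelled $(i,t)$ with $0\le t\le b_i-1$ consists of the integers $M_{i,t}+1,\ldots,M_{i,t}+q_i$, where
\begin{equation*}
M_{i,t}:=\sum_{i'=i+1}^{n}b_{i'}q_{i'}+t\,q_i .
\end{equation*}
By $1$-periodicity of $2\lvert\sin\pi x\rvert$, the product of the factors over this block equals $\prod_{s=1}^{q_i}2\lvert\sin\pi(s\alpha+M_{i,t}\alpha)\rvert$, so the task reduces to evaluating $M_{i,t}\alpha$ modulo $1$.

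The key step is then a direct computation with $q_k\alpha\equiv(-1)^k\delta_k\pmod 1$ from \eqref{alternating_approx}: applying this to every summand of $M_{i,t}$ and substituting $i'=i+j$ gives
\begin{equation*}
M_{i,t}\alpha\equiv t(-1)^i\delta_i+\sum_{j=1}^{n-i}b_{i+j}(-1)^{i+j}\delta_{i+j}=(-1)^i\Bigl(t\delta_i+\sum_{j=1}^{n-i}(-1)^j b_{i+j}\delta_{i+j}\Bigr)=(-1)^i\,\frac{\varepsilon_{i,t}(N)}{q_i}\pmod 1,
\end{equation*}
which is precisely the shift occurring in the definition of $P_{q_i}(\alpha,\varepsilon_{i,t}(N))$. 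Hence the product over block $(i,t)$ equals $P_{q_i}(\alpha,\varepsilon_{i,t}(N))$, and multiplying over all pairs $(i,t)$ produces the asserted factorisation.

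This is essentially a bookkeeping argument, so I do not expect a genuine obstacle; the points that need attention are the following. One must commit to the high-to-low ordering of the blocks from the start, since any other ordering tiles $\{1,\ldots,N\}$ just as well but changes the offsets and hence the exact form of $\varepsilon_{i,t}$. One should also check the boundary conventions: the empty-sum case $i=n$ yields $\varepsilon_{n,t}(N)=t\lambda_n$, matching $M_{n,t}\alpha\equiv(-1)^n t\delta_n$; and the case $i=0$ uses the $k=0$ instance $q_0\alpha\equiv\delta_0\pmod 1$, which is harmless because a nonzero digit $b_0$ forces $a_1\ge 2$ and hence $\delta_0=\{\alpha\}$, while for $a_1=1$ the Ostrowski constraint $b_0<a_1$ already excludes any block at level $0$.
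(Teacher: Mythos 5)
Your argument is correct and is essentially the standard proof: the paper itself states this as \cite[Proposition 4]{hauke_badly} without reproving it, and the proof there (going back to the decompositions in \cite{grepstad_neum,tech_zaf,quantum_invariants}) is exactly your block partition ordered from the highest Ostrowski digit downwards, combined with $1$-periodicity and $q_k\alpha\equiv(-1)^k\delta_k \pmod 1$ to identify each block with $P_{q_i}(\alpha,\varepsilon_{i,t}(N))$. Your boundary remarks (the empty sum at $i=n$ and the fact that $b_0\ge 1$ forces $a_1\ge 2$, so $\delta_0=\{\alpha\}$) are also the right points to check.
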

Since the set $\{n\alpha\}_{n=1}^{q_k}$ is (for badly approximable $\alpha$) very regularly distributed in the unit interval, the Sudler product at stage $q_k$ and its perturbed variant are much easier to control than the Sudler product at an arbitrary stage $N$. The first result in this direction was found by Mestel and Verschueren \cite{mestel} who proved that along the Fibonacci sequence $(F_n)_{n \in \N}$ (which are the convergents' denominators of the Golden Ratio), the limit $\lim_{N \to \infty} P_{F_n}(\phi)$ exists and equals $C \approx 2.407$. This convergence phenomenon was extended in \cite{tech_zaf}, showing that for $\beta(b) = [0;\overline{b}]$, we have
\begin{equation}\label{limit_fct}\lim_{N \to \infty} P_{q_n(\beta)}(\beta,\varepsilon) = G_{\beta}(\varepsilon),\end{equation}
where $G_{\beta}$ is a continuous function that can be described in a closed expression. Furthermore, on intervals that are bounded away from the zeros of $G_{\beta}$, the function is even $C^{\infty}$ and strictly log-concave. The convergence in \eqref{limit_fct} is locally uniform and even a convergence rate was established, see \cite{quantum_invariants,hauke_extreme}.

The statement \eqref{limit_fct} was generalized in \cite{grepstadII} to arbitrary quadratic irrationals: Given a period length $p$, the convergence of $P_{q_{np + m}}(\alpha,\varepsilon) \xrightarrow[n \to \infty]{} G_{m,\alpha}(\varepsilon),\ m = 0,\ldots,p-1$ was proven, with $G_{m,\alpha}$ having the same properties as $G_{\beta}$ mentioned above. Due to the lack of periodicity, a similar convergence result does not make sense for general badly approximable numbers; however, the following generalization to badly approximable numbers was established in \cite{hauke_badly}:

\begin{prop}\cite[Proposition 7]{hauke_badly}.\label{limit_H}
For $h \in \mathbb{N}$, we define 
\begin{equation*}
H_k(\alpha,\varepsilon) :=
2\pi \lvert \varepsilon + \lambda_k \rvert \prod_{n=1}^{\lfloor q_k/2\rfloor } h_{n,k}(\varepsilon),
\end{equation*}
where
\begin{equation}
\label{hkdef}
h_{n,k}(\varepsilon) = h_{n,k}(\alpha,\varepsilon) := \Bigg\lvert\Bigg(1 - \lambda_k\frac{\left\{n\cev{\alpha}_k\right\} - \frac{1}{2}}{n}\Bigg)^2 - \frac{\left(\varepsilon + \frac{\lambda_k}{2}\right)^2}{n^2}\Bigg\rvert.
\end{equation}
For any badly approximable $\alpha$ and any compact interval $I$, we have 
\[P_{q_k}(\alpha,\varepsilon) = H_k(\alpha,\varepsilon)\left(1 + \mathcal{O}\left(q_k^{-2/3}\log^{2/3}q_k\right)\right) + \mathcal{O}(q_k^{-2}),\quad \varepsilon \in I,\]
with the implied constant only depending on $K(\alpha) = \max_{i \in \mathbb{N}}a_i(\alpha)$ and $I$.\footnote{Although stated in \cite{hauke_badly} with dependence on $\alpha$, the method of proof actually reveals that the implied constant only depends on $K(\alpha)$.} In particular, we have

\[\lim_{k \to \infty} \inf_{\alpha \in E_3}\lvert P_{q_k}(\alpha,\varepsilon) - H_k(\alpha,\varepsilon) \rvert = 0,\]
with the convergence being locally uniform on $\mathbb{R}$.
\end{prop}

\section{The main ideas -- proof of the theorems assuming technical lemmas}\label{main_ideas}

We observe the following two important facts:
\begin{itemize}
    \item The functions $H_k(\alpha,\varepsilon)$ only depend (up to small, controllable factors) on partial quotients $a_i(\alpha)$ where $\lvert i-k\rvert$ is small.
    \item In this paper, we are only interested in finding \textit{lower} bounds for $H_k(\alpha,\varepsilon)$.
\end{itemize}

Therefore, we do the following: We classify the pairs $(\alpha,k)$ by the nine partial quotients 
$$
\cc(\alpha,k):=(a_{k-3},a_{k-2},\ldots, a_{k+4},a_{k+5})
$$
and construct $3^9=19683$ functions $F_{\cc}$ such that
$\liminf_{\ell \to \infty}H_{k_{\ell}}(\alpha,\varepsilon) \ge F_{\cc}(\varepsilon)$ on a certain interval $(\varepsilon_{\cc}^{\min},\varepsilon_{\cc}^{\max})$ where $k_{\ell}(\alpha)$ is the subsequence of $k$ satisfying $(a_{k-3}(\alpha),a_{k-2}(\alpha),\ldots,a_{k+5}(\alpha)) = \cc$ (see Lemma \ref{new_idea_lem}). In that way, we are only left with finitely many functions instead of the infinite sequence $(H_k)_{k \in \N}$.\\
The following proposition, in which we also introduce some notation, follows from basic properties of continued fractions. We recall that the quantities $\alpha_k$, $\cev{\alpha}_k$ and $\lambda_k,\lambda_{k,j}$ are defined in (\ref{alpha_delta_def}) and (\ref{lambda_kj_def}).
\begin{defprop}
\label{prop_lambda_def}
For each $\cc = (c_1,\ldots,c_9)\in \{1,2,3\}^9$ and $k \ge 3$, we define 
$$
\mathfrak{A}(\cc,k):=\left\{\alpha\in E_3 | (a_{k-3}(\alpha),a_{k-2}(\alpha),\ldots, a_{k+4}(\alpha),a_{k+5}(\alpha))=\cc \right\}.
$$
Furthermore, we denote
\begin{align*}
\vec{\cc}_{\min} &:= \lim_{k \to \infty}\inf\limits_{\alpha\in\mathfrak{A}(\cc,k)}\alpha_{k+1}(\alpha) =[c_5;c_6,c_7,c_8,c_9,\overline{3,1}],\\
\vec{\cc}_{\max}&:=\lim_{k \to \infty}\sup\limits_{\alpha\in\mathfrak{A}(\cc,k)}\alpha_{k+1}(\alpha) =[c_5;c_6,c_7,c_8,c_9,\overline{1,3}] ,\\
\cev{\cc}_{\min}&:=\lim_{k \to \infty}\inf\limits_{\alpha\in\mathfrak{A}(\cc,k)}\cev{\alpha}_{k}(\alpha)  = [0;c_4,c_3,c_2,c_1,\overline{3,1}],\\
\cev{\cc}_{\max}&:=\lim_{k \to \infty}\sup\limits_{\alpha\in\mathfrak{A}(\cc,k)}\cev{\alpha}_{k}(\alpha) =[0;c_4,c_3,c_2,c_1,\overline{1,3}] ,\\
\lambda_{\cc}^{\min}&:=\lim_{k \to \infty}\inf\limits_{\alpha\in\mathfrak{A}(\cc,k)}\lambda_{k}(\alpha)  = \frac{1}{\vec{\cc}_{\max}+\cev{\cc}_{\max}},\\
\lambda_{\cc}^{\max}&:=\lim_{k \to \infty}\sup\limits_{\alpha\in\mathfrak{A}(\cc,k)}\lambda_{k}(\alpha)=\frac{1}{\vec{\cc}_{\min}+\cev{\cc}_{\min}}.
\end{align*}
For any $j \in \{1,2,3,4\}$ we denote for odd $j$
\begin{align*}
\lambda_{\cc, j}^{\min}&:=\lim_{k \to \infty}\inf\limits_{\alpha\in\mathfrak{A}(\cc,k)}\lambda_{k,j}(\alpha)= \frac{1}{\langle c_5,\ldots,c_{5+j-1}\rangle+\cev{\cc}_{\max}\langle c_6,\ldots,c_{5+j-1}\rangle}\frac{1}{[c_{5+j};c_{5+j+1},\ldots,c_9,\overline{3,1}]+[0;c_{5+j-1},\ldots,c_1,\overline{3,1}]},\\
\lambda_{\cc, j}^{\max}&:=\lim_{k \to \infty}\sup\limits_{\alpha\in\mathfrak{A}(\cc,k)}\lambda_{k,j}(\alpha)= \frac{1}{\langle c_5,\ldots,c_{5+j-1}\rangle+\cev{\cc}_{\min}\langle c_6,\ldots,c_{5+j-1}\rangle}\frac{1}{[c_{5+j};c_{5+j+1},\ldots,c_9,\overline{1,3}]+[0;c_{5+j-1},\ldots,c_1,\overline{1,3}]}.
\end{align*}
If $j$ is even, we replace the period $\overline{3,1}$ by $\overline{1,3}$ and vice-versa.
Similarly, we define
\begin{align*}
\lambda_{\cc, 5}^{\min}&:=\lim_{k \to \infty}\inf\limits_{\alpha\in\mathfrak{A}(\cc,k)}\lambda_{k,j}(\alpha)= \frac{1}{\langle c_5,\ldots,c_{9}\rangle+\cev{\cc}_{\max}\langle c_6,\ldots,c_{9}\rangle}\frac{1}{[3;\overline{1,3}]+[0;c_{9},\ldots,c_1,\overline{3,1}]},\\
\lambda_{\cc, 5}^{\max}&:=\lim_{k \to \infty}\sup\limits_{\alpha\in\mathfrak{A}(\cc,k)}\lambda_{k,j}(\alpha)= \frac{1}{\langle c_5,\ldots,c_{9}\rangle+\cev{\cc}_{\min}\langle c_6,\ldots,c_{9}\rangle}\frac{1}{[1;\overline{3,1}]+[0;c_{9},\ldots,c_1,\overline{1,3}]}.
\end{align*}
\end{defprop}

In view of Proposition \ref{prop_shifted}, every perturbation variable $\varepsilon$ that needs to be considered arises
from some $\varepsilon_{i,c_i}(N)$. The following statement provides both upper and lower bounds for such perturbations:

\begin{prop}\label{max_perturb}
Let $\alpha \in \tilde{E}_3$ fixed. For all $i\in\mathbb{N}$ and every $N \in \N$ we have 
\begin{equation}
\label{eps_min_max}
 -\lambda_i+\lambda_{i,1}\le \varepsilon_{i,c_i}(N)\le (a_{i+1}-1)\lambda_i+\lambda_{i,1}.
\end{equation}
In particular, if $i$ is sufficiently large and $(a_{i-3}(\alpha),a_{i-2}(\alpha),\ldots, a_{i+4}(\alpha),a_{i+5}(\alpha))=\cc$, we have for every $N \in \N$
\begin{equation}
\label{eps_min_max_new}
-1 < \varepsilon_{\cc}^{\min} := -\lambda_{\cc}^{\max}+\lambda_{\cc,1}^{\min} - 10^{-100}\le \varepsilon_{i,c_i}(N) \le (c_5-1)\lambda_{\cc}^{\max}+\lambda_{\cc,1}^{\max} + 10^{-100} =: \varepsilon_{\cc}^{\max} < 1.
\end{equation}
\end{prop}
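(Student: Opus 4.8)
The plan is to derive first the explicit two-sided inequality \eqref{eps_min_max}, which carries all the content, and then to read off \eqref{eps_min_max_new} by inserting the asymptotic quantities of Definition/Proposition \ref{prop_lambda_def}. To set up \eqref{eps_min_max}, I would recall from Proposition \ref{prop_shifted} that the perturbation labelled $\varepsilon_{i,c_i}(N)$ is one of the numbers $\varepsilon_{i,t}(N)$ with $0\le t\le b_i-1$, where $N=\sum_\ell b_\ell q_\ell$ is the Ostrowski expansion of $N$; in particular $b_i\ge 1$. Multiplying out the definition of $\varepsilon_{i,t}$ and using $q_i\delta_i=\lambda_i$ together with $q_i\delta_{i+j}=\lambda_{i,j}$ from \eqref{lambda_kj_def}, this becomes
\[
\varepsilon_{i,t}(N)=t\lambda_i+S,\qquad S:=\sum_{j\ge 1}(-1)^jb_{i+j}\lambda_{i,j}.
\]
Since $0\le t\le b_i-1\le a_{i+1}-1$, everything then reduces to bounding the alternating tail $S$ from above and below.

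This step is the crux, and the tool is the telescoping identity \eqref{deltasum}. For the upper bound I would discard the non-positive odd-indexed terms of $S$, estimate $b_{i+j}\le a_{i+j+1}$ in the even-indexed terms, and observe (applying \eqref{deltasum} at $k=i+1$) that $\sum_{s\ge 1}a_{i+2s+1}\lambda_{i,2s}=q_i\delta_{i+1}=\lambda_{i,1}$, so $S\le\lambda_{i,1}$. For the lower bound I would discard the non-negative even-indexed terms and estimate $b_{i+j}\le a_{i+j+1}$ for odd $j\ge 3$; the one genuinely delicate point is the $j=1$ term, where I would invoke the Ostrowski admissibility rule: because $b_i\ge1$, the rule ``$b_i=0$ whenever $b_{i+1}=a_{i+2}$'' forces $b_{i+1}\le a_{i+2}-1$. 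Then \eqref{deltasum} at $k=i$ gives $-S\le (a_{i+2}-1)\lambda_{i,1}+\sum_{s\ge 2}a_{i+2s}\lambda_{i,2s-1}=q_i\delta_i-\lambda_{i,1}=\lambda_i-\lambda_{i,1}$. Combining the two bounds, $\lambda_{i,1}-\lambda_i\le S\le\lambda_{i,1}$, and hence $-\lambda_i+\lambda_{i,1}\le\varepsilon_{i,t}(N)\le (a_{i+1}-1)\lambda_i+\lambda_{i,1}$, which is \eqref{eps_min_max}. I expect the main obstacle (beyond careful parity bookkeeping) to be exactly this use of the admissibility rule: without it the lower bound degrades to $-\lambda_i$, which is not strong enough to force the perturbation into the open interval $(-1,1)$.

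For \eqref{eps_min_max_new} I would take $i$ large with $\cc(\alpha,i)=\cc$, so that $a_{i+1}=c_5$ and $\alpha\in\mathfrak{A}(\cc,i)$. By Definition/Proposition \ref{prop_lambda_def} the quantities $\inf_{\mathfrak{A}(\cc,i)}\lambda_i$, $\sup_{\mathfrak{A}(\cc,i)}\lambda_i$, $\inf_{\mathfrak{A}(\cc,i)}\lambda_{i,1}$, $\sup_{\mathfrak{A}(\cc,i)}\lambda_{i,1}$ converge as $i\to\infty$ to $\lambda_{\cc}^{\min},\lambda_{\cc}^{\max},\lambda_{\cc,1}^{\min},\lambda_{\cc,1}^{\max}$ respectively, so for $i$ large we have $\lambda_i<\lambda_{\cc}^{\max}+10^{-101}$, $\lambda_{i,1}<\lambda_{\cc,1}^{\max}+10^{-101}$ and $\lambda_{i,1}>\lambda_{\cc,1}^{\min}-10^{-101}$. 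Feeding these into \eqref{eps_min_max} and absorbing the accumulated error into $10^{-100}$ (using $c_5\le 3$) yields $\varepsilon_{\cc}^{\min}\le\varepsilon_{i,c_i}(N)\le\varepsilon_{\cc}^{\max}$.

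It remains to check $\varepsilon_{\cc}^{\min}>-1$ and $\varepsilon_{\cc}^{\max}<1$. Here I would use that for $\alpha\in E_3$ every continued-fraction tail satisfies $[0;a_j,a_{j+1},\ldots]>\tfrac14$ (since $a_j\le 3$ and $[0;a_{j+1},\ldots]<1$). This gives $\lambda_i=(\alpha_{i+1}+\cev{\alpha}_i)^{-1}<(c_5+\tfrac14)^{-1}$, and, writing $\lambda_{i,1}=\cev{\alpha}_{i+1}\lambda_{i+1}$ with $\cev{\alpha}_{i+1}<\tfrac1{c_5}$ and $\alpha_{i+2}+\cev{\alpha}_{i+1}>\tfrac54$, also $\lambda_{i,1}<\tfrac{4}{5c_5}$. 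Passing to the limit, $\lambda_{\cc}^{\max}\le (c_5+\tfrac14)^{-1}$ and $\lambda_{\cc,1}^{\max}\le\tfrac{4}{5c_5}$, and a one-line case check over $c_5\in\{1,2,3\}$ shows $(c_5-1)\lambda_{\cc}^{\max}+\lambda_{\cc,1}^{\max}<1-10^{-100}$ while $\lambda_{\cc}^{\max}\le\tfrac45$; hence $\varepsilon_{\cc}^{\max}<1$ and $\varepsilon_{\cc}^{\min}\ge-\lambda_{\cc}^{\max}-10^{-100}>-1$. The only slightly subtle bookkeeping point in this last part is that $\lambda_{\cc}^{\max}$ and $\lambda_{\cc,1}^{\max}$ are separate suprema, so one must bound each factor individually (rather than bounding $(a_{i+1}-1)\lambda_i+\lambda_{i,1}=q_i(\delta_{i-1}-\delta_i)$ directly) to be sure the closed-form expressions $\varepsilon_{\cc}^{\min},\varepsilon_{\cc}^{\max}$ themselves stay inside $(-1,1)$.
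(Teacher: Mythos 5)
Your proof is correct and takes essentially the same route as the paper: both arguments bound $\varepsilon_{i,t}(N)$ through its extreme values in $t$, control the alternating tail with \eqref{deltasum}, and use the Ostrowski admissibility constraint $b_{i+1}\le a_{i+2}-1$ (forced by $b_i\ge 1$) to get the lower bound $-\lambda_i+\lambda_{i,1}$. The only difference is that you spell out the passage to $\lambda_{\cc}^{\min},\lambda_{\cc}^{\max},\lambda_{\cc,1}^{\min},\lambda_{\cc,1}^{\max}$ and the numerical check that $\varepsilon_{\cc}^{\min}>-1$ and $\varepsilon_{\cc}^{\max}<1$, which the paper compresses into ``follows immediately from \eqref{eps_min_max} and Definition/Proposition \ref{prop_lambda_def}''.
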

\begin{proof}
Using \eqref{deltasum} and the fact that $b_{i+1}\le a_{i+2}-1$, we obtain that
$$
\varepsilon_{i,c_i}(N)\ge\varepsilon_{i,0}(N)\ge -q_i\left((a_{i+2}-1)\delta_{i+1}-a_{i+4}\delta_{i+3}-a_{i+6}\delta_{i+5}-\ldots\right)\ge -q_i(\delta_i-\delta_{i+1})=-\lambda_i+\lambda_{i,1}.
$$
On the other hand, using also the fact that $b_i\le a_{i+1}$, we have
$$
\varepsilon_{i,c_i}(N)\le\varepsilon_{i,b_i-1}(N)=q_i\left((b_i-1)\delta_i+a_{i+3}\delta_{i+2}+a_{i+5}\delta_{i+4}+\ldots\right)\le q_i((a_{i+1}-1)\delta_i+q_i\delta_{i+1})=(a_{i+1}-1)\lambda_i+\lambda_{i,1},
$$
which shows \eqref{eps_min_max}. The statement in \eqref{eps_min_max_new} now follows immediately from \eqref{eps_min_max} and Definition/Proposition \ref{prop_lambda_def}.
\end{proof}

Having this established, we present Lemma \ref{new_idea_lem}, which is, from a technical perspective, the main ingredient of this article. It can be seen as an analog of the convergence along subsequences established in \cite{tech_zaf,grepstadII}.

\begin{lem}\label{new_idea_lem}
    Let $n_0 \ge 10, T \ge n_0+2$ and $m \in \N$ be fixed parameters. For any $\cc \in \{1,2,3\}^9$, let
\begin{equation}
\label{Hk_step_fin_}
 F_{\cc}(\varepsilon)= F_{\cc}(\varepsilon,n_0,T,m) :=
2\pi(\varepsilon+\lambda_{\cc}^{\min})f_{\cc, \infty}(\varepsilon) \prod_{n=1}^{n_0 } f_{\cc,n}(\varepsilon), \quad \varepsilon\in(\varepsilon_{\cc}^{\min},\varepsilon_{\cc}^{\max}),
\end{equation}
where the positive and log-concave functions 
$f_{\cc,n}(\varepsilon)$ and $f_{\cc,\infty}(\varepsilon)=f_{\cc,\infty}(\varepsilon,n_0,T,m)$ will be defined later.
Then we have the following:
\begin{itemize}
    \item $F_{\cc}(\varepsilon)$ is pseudo-concave on $(\varepsilon_{\cc}^{\min},\varepsilon_{\cc}^{\max})$.
    \item For any $\alpha \in E_3$, let $k_{\ell}(\cc,\alpha):= \{k \in \N: (a_{k-3}(\alpha),a_{k-2}(\alpha),\ldots, a_{k+4}(\alpha),a_{k+5}(\alpha))=\cc\}$.
     Then we have 
\[
\liminf_{\ell \to \infty}H_{k_{\ell}}(\alpha,\varepsilon) \ge F_{\cc}(\varepsilon)
\]
uniformly in $(\varepsilon_{\cc}^{\min},\varepsilon_{\cc}^{\max})$
and $E_3$.
\end{itemize}
\end{lem}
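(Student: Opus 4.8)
The plan is to decompose
$H_{k}(\alpha,\varepsilon)=2\pi|\varepsilon+\lambda_k|\prod_{n=1}^{\lfloor q_k/2\rfloor}h_{n,k}(\varepsilon)$ (Proposition \ref{limit_H}) into the linear prefactor, a \emph{head} $\prod_{n=1}^{n_0}h_{n,k}$ (together with the finitely many intermediate terms $n_0<n\le T$, treated with cruder estimates), and a \emph{far tail} $\prod_{n>T}h_{n,k}$, and then to bound each of these pieces from below, uniformly in $\varepsilon$, along the subsequence $k=k_\ell(\cc,\alpha)$. The basic observation is the factorisation
\[
h_{n,k}(\varepsilon)=\Bigl|1-\tfrac{\varepsilon+\lambda_k\{n\cev{\alpha}_k\}}{n}\Bigr|\cdot\Bigl|1+\tfrac{\varepsilon+\lambda_k(1-\{n\cev{\alpha}_k\})}{n}\Bigr|
\]
into a product of two affine functions of $\varepsilon$. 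For large $n$ both factors are trivially positive, and one checks from the continued-fraction formulas in Definition/Proposition \ref{prop_lambda_def} together with Proposition \ref{max_perturb} that the interval $(\varepsilon_{\cc}^{\min},\varepsilon_{\cc}^{\max})$ is contained in the region on which each of these factors keeps a constant sign for \emph{every} admissible pair $(\lambda_k,\cev{\alpha}_k)$ --- this is the analogue of the interval in \cite{tech_zaf,grepstadII} staying away from the zeros of the limiting function. On that interval $h_{n,k}$ is therefore a product of two positive affine functions, hence log-concave.

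For each $n\le T$, I would define $f_{\cc,n}(\varepsilon)$ by replacing, in the two affine factors, $\lambda_k$ and $\{n\cev{\alpha}_k\}$ by their worst admissible values; by Definition/Proposition \ref{prop_lambda_def} these lie in fixed ranges up to $o(1)$ for $\alpha\in\mathfrak{A}(\cc,k_\ell)$ with $\ell$ large, so $f_{\cc,n}$ is again a product of two positive affine functions --- hence positive and log-concave --- and a Lipschitz estimate uniform in $\varepsilon$ gives $h_{n,k_\ell}(\alpha,\varepsilon)\ge f_{\cc,n}(\varepsilon)-o_\ell(1)$ uniformly. For the far tail I would Taylor-expand
\[
\log h_{n,k}(\varepsilon)=\frac{2\lambda_k\bigl(\tfrac12-\{n\cev{\alpha}_k\}\bigr)}{n}-\frac{(\varepsilon+\lambda_k\{n\cev{\alpha}_k\})^2+(\varepsilon+\lambda_k(1-\{n\cev{\alpha}_k\}))^2}{2n^2}+O(n^{-3})
\]
and sum over $T<n\le\lfloor q_k/2\rfloor$. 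The first sum, which is $\varepsilon$-independent, equals $2\lambda_k\sum\frac{1/2-\{n\cev{\alpha}_k\}}{n}$; by Ostrowski's estimate \eqref{Ostr_est} (applicable since $\max_i a_i(\cev{\alpha}_k)\le3$) and partial summation it is bounded below by an explicit constant depending only on $T$ --- crucially it need \emph{not} converge as $k\to\infty$, since it depends on the uncontrolled tail of $\alpha$, but a uniform lower bound is all that is needed. The second sum, after estimating $\{n\cev{\alpha}_k\}\in[0,1]$ trivially and $\lambda_k$ by its range, is at most a \emph{convex} quadratic in $\varepsilon$ times $\sum_{n>T}n^{-2}$; the $O(n^{-3})$ remainder (and the analogous higher-order terms when a longer, order-$m$, expansion is used to sharpen the constant) is $\varepsilon$-dependent but summably small. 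Hence $\log\bigl(\prod_{n>T}h_{n,k_\ell}(\alpha,\varepsilon)\bigr)\ge-(\text{convex function of }\varepsilon)-o_\ell(1)$, and I would set $f_{\cc,\infty}(\varepsilon)$ to be the exponential of the right-hand side, multiplied by the retained log-concave factors for $n_0<n\le T$; being $e^{\,\text{concave}}$ times positive log-concave functions, $f_{\cc,\infty}$ is positive and log-concave, and $\prod_{n>n_0}h_{n,k_\ell}(\alpha,\varepsilon)\ge f_{\cc,\infty}(\varepsilon)-o_\ell(1)$ uniformly.

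Finally I would combine the three bounds. Since $\lambda_{k_\ell}(\alpha)\ge\lambda_\cc^{\min}-o_\ell(1)$, one has $2\pi|\varepsilon+\lambda_{k_\ell}|\ge2\pi(\varepsilon+\lambda_\cc^{\min})-o_\ell(1)$; as all factors are non-negative and the $f$'s are bounded above and below by positive constants on the interval, multiplying yields $H_{k_\ell}(\alpha,\varepsilon)\ge F_{\cc}(\varepsilon)-o_\ell(1)$ uniformly in $\varepsilon\in(\varepsilon_\cc^{\min},\varepsilon_\cc^{\max})$, which is the second assertion. For the first assertion, one verifies from Definition/Proposition \ref{prop_lambda_def} that $\varepsilon_\cc^{\min}+\lambda_\cc^{\min}>0$ (the term $\lambda_{\cc,1}^{\min}$ dominates $\lambda_\cc^{\max}-\lambda_\cc^{\min}$), so that $2\pi(\varepsilon+\lambda_\cc^{\min})$ is a positive affine, hence log-concave, function on the interval; $F_\cc$ is then a finite product of positive log-concave functions, hence log-concave, hence pseudo-concave. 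I expect the main obstacle to be the tail estimate: one must make the lower bound for $\prod_{n>T}h_{n,k_\ell}$ uniform in $\varepsilon$ \emph{and} in $\ell$ even though the oscillating sum $\sum_{n>T}\tfrac{1/2-\{n\cev{\alpha}_{k_\ell}\}}{n}$ genuinely depends on the tail of the continued fraction of $\alpha$ --- this is exactly where \eqref{Ostr_est} is indispensable --- together with the verification that $(\varepsilon_\cc^{\min},\varepsilon_\cc^{\max})$ really lies inside the sign-constancy region of all the affine factors so that the constructed $f_{\cc,n}$ and $f_{\cc,\infty}$ are positive there. The genuinely hard quantitative step of turning these finitely many $F_\cc$ into a positive lower bound for $P_N(\alpha)$ is deferred to Lemma \ref{main_techn_lem_variant}.
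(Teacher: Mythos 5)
Your overall architecture matches the paper's: factor out $2\pi(\varepsilon+\lambda_{\cc}^{\min})$ (positivity via $\varepsilon_{\cc}^{\min}\ge-\lambda_{\cc}^{\min}$, which is the paper's Proposition \ref{useful_estimates}); minorize each small-$n$ factor $h_{n,k}$ by a pattern-dependent, positive, (log-)concave function using the ranges from Definition/Proposition \ref{prop_lambda_def}; control the far tail by passing to logarithms plus Ostrowski's bound \eqref{Ostr_est}; and conclude log-concavity, hence pseudo-concavity. Those parts are sound, modulo the sign-constancy checks for your affine factorisation at $n=1$, which you assert rather than verify across all $\cc$; note the paper sidesteps this entirely by using $h_{n,k}\ge(\cdot)^2-(\cdot)^2$ and building a concave minorant $g_{\cc,n}-e_{\cc,n}$ directly (Lemma \ref{fcn_lem}).

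The genuine gap is your treatment of the middle range $n_0<n\le T$. You minorize each factor there separately by worst-casing $\lambda_k$ and $\{n\cev{\alpha}_k\}$ over their admissible ranges. But for $n$ beyond the denominator of the fourth convergent of $\cev{\alpha}_k$ (at most $\langle 3,3,3,3\rangle=109$, usually far less), the pattern $\cc$ does not constrain $\{n\cev{\alpha}_k\}$ at all, so your per-$n$ minorant degenerates to roughly $1-\lambda_{\cc}^{\max}/n$, and the product over $n_0<n\le T$ loses a factor of order $(n_0/T)^{\lambda_{\cc}^{\max}}$; with $n_0=20$, $T=10^4$ this is a loss by a constant factor far below what is tolerable. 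This does not formally contradict the two bullet points (any positive log-concave minorant satisfies them), but it destroys the quantitative content the construction must carry into Lemma \ref{main_techn_lem_variant}: the inequalities $F_{\cc}(0)>1.0001$ etc.\ would fail for your $F_{\cc}$, so your construction cannot play the role the lemma has in the proof of Theorem \ref{thm_main}. The missing idea is exactly how the paper builds $f_{\cc,\infty}$: apply the product-to-sum estimate (Proposition \ref{Hauke2021lem}) to the \emph{entire} range $n>n_0$, rewrite $\sum_{n_0<n\le T}\frac{1/2-\{n\cev{\alpha}_k\}}{n}$ by partial summation in terms of $S_{\ell}(\cev{\alpha}_k)$, and bound $S_{\ell}$ on $n_0<\ell\le T$ by the sandwich $w_{\min}(\ell,x,y)\le S_{\ell}(\cev{\alpha}_k)\le w_{\max}(\ell,x,y)$ over an adaptively refined covering of $(\cev{\cc}_{\min},\cev{\cc}_{\max})$ by small intervals; the parameter $m$ is the recursion depth of that covering algorithm (producing the constant $W_{\cc}(n_0,T,m)$), not the order of a Taylor expansion as in your sketch. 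Only the range $\ell>T$ is handled by Ostrowski's estimate, as you do.
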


The construction of the family $\mathcal{F} := \{F_{\cc}: \cc \in \{1,2,3\}^9\}$ and the proof of Lemma \ref{new_idea_lem} will be given in Section \ref{Hk_section}.
It turns out that after an extensive case distinction and computational assistance that will be proven in Section \ref{case_study}, the following holds true:

\begin{lem}\label{main_techn_lem_variant}
Let $F_{\cc} = F_{\cc}(m,n_0,T)$ be defined as in \eqref{Hk_step_fin_}. With parameters $m = 40,n_0 = 20$ and $T = 10000$, the following inequalities hold:
\begin{equation}
\label{mainineq_negpert_}
 F_{\cc}(\varepsilon_{\cc}^{\min})>0.35,
\end{equation}

\begin{equation}
\label{mainineq_zeropert_}
 F_{\cc}(0)>1.0001,
\end{equation}
\begin{equation}
\label{mainineq_no_large_positive_}
F_{\cc}\bigl(\varepsilon_{\cc}^{\max}\bigr)>1.0001.
\end{equation}

Furthermore, for any $\alpha \in \tilde{E}_3$ we have the following: For $N \in \N$, let $i_k=i_k(N)$ denote the sequence of indices such that the Ostrowski digit $b_{i_k}=b_{i_k}(N)$ is non-zero.
Then for all $N$ large enough and writing $\cc_k = \cc(\alpha,k)$, every $F_{\cc_k} \in \mathcal{F}$ satisfies

\begin{equation}
\begin{split}
\label{mainineq_no_large_negative_}
\max\Bigl(&F_{\cc_{i_{k}}}(\varepsilon_{{i_{k}},0}(N)), 
F_{\cc_{i_{k}}}(\varepsilon_{{i_{k}},0}(N))\prod_{t=1}^{b_{i_k+1}-1} F_{\cc_{i_{k+1}}}(\varepsilon_{i_{k+1},t}(N)), F_{\cc_{i_{k}}}(\varepsilon_{{i_{k}},0}(N))F_{\cc_{i_{k+1}}}(\varepsilon_{{i_{k+1}},0}(N)),\\&
F_{\cc_{i_{k}}}(\varepsilon_{{i_{k}},0}(N))F_{\cc_{i_{k+1}}}(\varepsilon_{{i_{k+1}},0}(N))F_{\cc_{i_{k+2}}}(\varepsilon_{{i_{k+2}},0}(N))\Bigr)>1.0001.
\end{split}
\end{equation}
\end{lem}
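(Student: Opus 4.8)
The argument splits naturally into the three pointwise bounds \eqref{mainineq_negpert_}--\eqref{mainineq_no_large_positive_} and the product bound \eqref{mainineq_no_large_negative_}, and in both cases the proof reduces to a finite, rigorously certified computation.

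\emph{The pointwise bounds.} Once the parameters $m=40$, $n_0=20$, $T=10000$ are fixed, each $F_{\cc}$ is a fully explicit function: the product of the $n_0$ log-concave factors $f_{\cc,n}$, the linear factor $2\pi(\varepsilon+\lambda_{\cc}^{\min})$, and the tail factor $f_{\cc,\infty}$, whose parameters ($\lambda_{\cc}^{\min}$, the various $\lambda_{\cc,j}^{\min},\lambda_{\cc,j}^{\max}$, and $\vec{\cc}_{\min},\vec{\cc}_{\max},\cev{\cc}_{\min},\cev{\cc}_{\max}$) are the explicit quadratic irrationals of Definition/Proposition \ref{prop_lambda_def}, while the evaluation points $\varepsilon_{\cc}^{\min},0,\varepsilon_{\cc}^{\max}$ are explicit by Proposition \ref{max_perturb}. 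I would therefore evaluate $F_{\cc}$ at each of these three points for each of the $3^9=19683$ words $\cc\in\{1,2,3\}^9$ by interval arithmetic, controlling the rounding of the algebraic parameters and the truncation hidden in $f_{\cc,\infty}$ by the estimates established in Section \ref{Hk_section}, and checking the thresholds $0.35$ and $1.0001$. The computation is organised by observing that every $f_{\cc,n}$ depends only on a bounded window of coordinates of $\cc$, so that most sub-products are shared among many words; the only difficulty is the scale and the need for certified rather than floating-point arithmetic.

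\emph{The product bound.} Here I would first combine the pseudo-concavity of $F_{\cc}$ from Lemma \ref{new_idea_lem} with \eqref{mainineq_zeropert_} and \eqref{mainineq_no_large_positive_} to obtain $F_{\cc}(\varepsilon)>1.0001$ for every $\varepsilon\in[0,\varepsilon_{\cc}^{\max}]$, and with \eqref{mainineq_negpert_} to obtain $F_{\cc}(\varepsilon)>0.35$ on all of $(\varepsilon_{\cc}^{\min},\varepsilon_{\cc}^{\max})$. Since $\varepsilon_{i_k,0}(N)\in(\varepsilon_{\cc_{i_k}}^{\min},\varepsilon_{\cc_{i_k}}^{\max})$ by Proposition \ref{max_perturb}, if the first term $F_{\cc_{i_k}}(\varepsilon_{i_k,0}(N))$ of the maximum fails to exceed $1.0001$ then necessarily $\varepsilon_{i_k,0}(N)<0$. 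The structural input is that, expanding $\varepsilon_{i,0}(N)$ via Proposition \ref{prop_shifted}, one has the one-step recursion $\varepsilon_{i,0}(N)=-b_{i+1}\lambda_{i,1}-(q_i/q_{i+1})\varepsilon_{i+1,0}(N)$, which, iterated across the vanishing Ostrowski digits between $i_k$ and $i_{k+1}$, expresses $\varepsilon_{i_k,0}(N)$ in terms of $\varepsilon_{i_{k+1},0}(N)$ up to an explicit shift and a sign; combined with the a priori window \eqref{eps_min_max}, the condition $\varepsilon_{i_k,0}(N)<0$ then confines $\varepsilon_{i_{k+1},0}(N)$ (and the block perturbations $\varepsilon_{i_{k+1},t}(N)$, and if necessary $\varepsilon_{i_{k+2},0}(N)$) to an explicit sub-interval of $[0,\varepsilon_{\cc}^{\max}]$ on which $F$ is bounded strictly above $1$.

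\emph{Assembling the recovery.} It then remains to check, by a further finite interval-arithmetic verification ranging over the admissible pairs $(\cc_{i_k},\cc_{i_{k+1}})$ and triples $(\cc_{i_k},\cc_{i_{k+1}},\cc_{i_{k+2}})$ and restricted to the sub-intervals carved out above, that in every configuration one of the following holds: either $F_{\cc_{i_{k+1}}}$ on the forced region already exceeds $1.0001/0.35$, so the third term of the maximum beats $1.0001$ against the bare bound $0.35$ for $F_{\cc_{i_k}}(\varepsilon_{i_k,0}(N))$ from \eqref{mainineq_negpert_}; or the digit $b_{i_k+1}$ is forced large enough that the block product in the second term compensates; or, in the remaining borderline cases, passing to the triple product with $\varepsilon_{i_{k+2},0}(N)$ in the fourth term does. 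The main obstacle is precisely the middle step: describing the transition $\varepsilon_{i_k,0}(N)\mapsto(\varepsilon_{i_{k+1},t}(N))_t$ sharply enough — including the bookkeeping of the vanishing intermediate digits and of the parity of $i_{k+1}-i_k$ — that the resulting sub-interval is narrow enough for the compensating numerical bounds to hold; together with, as throughout, the size of the accompanying certified computation over configurations in $\mathcal{F}$.
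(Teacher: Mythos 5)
Your handling of the three pointwise inequalities \eqref{mainineq_negpert_}, \eqref{mainineq_zeropert_}, \eqref{mainineq_no_large_positive_} is essentially the paper's: a certified finite computation over all $3^9$ words combined with pseudo-concavity (the paper evaluates on the grid $x_i=-1+0.001i$ and uses the monotonicity statement of Proposition \ref{monot_prop} rather than evaluating exactly at $\varepsilon_{\cc}^{\min},\varepsilon_{\cc}^{\max}$, but that is an implementation detail). The genuine gap is in \eqref{mainineq_no_large_negative_}. Your structural claim that $\varepsilon_{i_k,0}(N)<0$ confines $\varepsilon_{i_{k+1},0}(N)$ (and the block perturbations) to a sub-interval of $[0,\varepsilon_{\cc}^{\max}]$ is false. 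Your own recursion $\varepsilon_{i,0}=-b_{i+1}\lambda_{i,1}-(q_i/q_{i+1})\varepsilon_{i+1,0}$ shows that $\varepsilon_{i_k,0}<0$ only forces $\varepsilon_{i_{k+1},0}$ to be not too negative, not nonnegative: the sign of each $\varepsilon_{i,0}$ is governed by the parity of the gap to the next nonzero Ostrowski digit (see \eqref{greater0}), so whenever consecutive gaps are odd (e.g.\ $b_{i_k},b_{i_k+1},b_{i_k+2}$ all nonzero, or nonzero digits at $i_k$ and $i_k+3$ followed by another odd gap) both $\varepsilon_{i_k,0}$ and $\varepsilon_{i_{k+1},0}$ are negative. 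These simultaneous-negative configurations are exactly the delicate ones that occupy most of the paper's Section \ref{case_study}.

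This matters quantitatively because your recovery enumeration only uses the bare bound $F_{\cc_{i_k}}(\varepsilon_{i_k,0})>0.35$ for the first factor, so it needs a compensating factor (or product of factors on the forced region) exceeding $1.0001/0.35\approx 2.86$, or a forced large digit, or the triple product. The functions $F_{\cc}$ are lossy lower bounds and do not deliver such large values in the critical configurations: the paper's verified pair products are as small as $1.0032$ (Case 1.1.3.4) and $1.0046$ (Case 1.1.2.1), which is only attainable because in each case a refined lower bound on $\varepsilon_{i_k,0}$ \emph{itself} is extracted from the subsequent digit pattern (for instance $\varepsilon_{k,0}\ge -\lambda_{\cc_k,1}^{\max}+\lambda_{\cc_k,3}^{\min}$ when $b_{k+1}=b_{k+2}=1$, $b_{k+3}\ge 1$), making $F_{\cc_{i_k}}(\varepsilon_{i_k,0})$ far larger than $0.35$, while simultaneously the same digit information bounds $\varepsilon_{i_{k+1},0}$ (or $\varepsilon_{i_{k+2},0}$, or the $t\ge 1$ block factors) from below. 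Without this simultaneous, case-by-case two-sided control of both perturbations — i.e.\ the paper's case distinction on $b_{i_k+1},\dots,b_{i_k+6}$ expressed through the quantities $\lambda_{\cc,j}^{\min},\lambda_{\cc,j}^{\max}$ — the maximum in \eqref{mainineq_no_large_negative_} cannot be pushed above $1.0001$, so the plan as written does not close.
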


 \begin{proof}[Proof of Theorem \ref{thm_main} assuming Lemmas \ref{new_idea_lem} and \ref{main_techn_lem_variant}]

Let $k_0$ be a large integer such that for every $k \ge k_0$ and any $N \in \N$, we have the following:

\begin{itemize}
    \item 
    For any $0 \le t \le b_{k-1}$, we have by  \eqref{eps_min_max_new}
    \begin{equation}\label{perturb_range}\varepsilon_{k,t}(N) \in (\varepsilon_{\cc_k}^{\min},\varepsilon_{\cc_k}^{\max}).\end{equation}
\item By Proposition \ref{limit_H} and \eqref{perturb_range},\begin{equation}\label{PH_close}P_{q_{k}}(\alpha,\varepsilon_{k,c_k}(N)) \ge 
H_{k}(\alpha,\varepsilon_{k,c_k}(N)) - 10^{-10}.\end{equation}
\item By  Lemma \ref{new_idea_lem} and \eqref{perturb_range}, \begin{equation}\label{HF_close}H_{k}(\alpha,\varepsilon_{k,c_k}(N)) \ge F_{\cc_k}(\varepsilon_{k,c_k}(N)) - 10^{-10}.\end{equation}
\end{itemize}

Now let $N$ be a fixed integer. Denote by $m=m(N)$ the number of non-zero Ostrowski digits of $N$.
By Proposition \ref{prop_shifted}, we can write
\[
    P_N(\alpha) = \prod_{k=1}^{m}\prod_{t= 0}^{b_{i_k}-1} P_{q_{i_k}}(\alpha,\varepsilon_{i,t}(N)).
\] 
Writing $m_0$ for the smallest index such that $i_{m_0}>k_0$, we use \eqref{perturb_range}, \eqref{PH_close} and \eqref{HF_close} to obtain
\begin{equation}\label{liminf_}
\prod_{k=1}^{m}\prod_{t= 0}^{b_{i_k}-1} P_{q_{i_k}}(\alpha,\varepsilon_{i,t}(N))
\ge \left(\prod_{k=1}^{m_0-1}\prod_{t=0}^{b_{i_k}-1} P_{q_{i_k}}(\varepsilon_{i_k,t}(N))\right)\cdot
\left(\prod_{k=m_0}^{m}\prod_{t= 0}^{b_{i_k}-1} \left(F_{\cc_{i_k}}(\varepsilon_{i_k,t}(N))- 10^{-9}\right)\right).
\end{equation}
Since by \cite[Lemma 3] {quantum_invariants} $P_{q_k}(\alpha, \varepsilon_{i,c_i}(N))$ is bounded away from $0$, it suffices to consider the product of the remaining factors of (\ref{liminf_}).
Note that if $\varepsilon_{i_k,t}(N) \ge 0$, we have $F_{\cc_{i_k}}(\varepsilon_{i_k,t}(N))- 10^{-9} \ge 1.00001$. This follows from  (\ref{mainineq_zeropert_}), (\ref{mainineq_no_large_positive_}) and the pseudo-concavity of $F_{\cc}$.
If $t\ge 1$, then
$$
\varepsilon_{\cc}^{\min}>\varepsilon_{i_k,t}(N)=
q_i\left(t\delta_i + \sum_{j=1}^{n-i} (-1)^jb_{i+j} \delta_{i+j}\right)\ge
q_i\left(\delta_i-\sum_{j=0}^{\infty} a_{i+j+2} \delta_{i+j+1}\right)\ge 0,
$$
thus for $t\ge 1$,
\begin{equation}\label{use_logconcave}F_{\cc_{i_k}}(\varepsilon_{i_k,t}(N))- 10^{-9} \ge 1.00001.\end{equation}
This implies that

\begin{equation*}
\prod_{k=m_0}^{m}\prod_{t=0}^{b_{i_k}-1} F_{\cc_{i_k}}(\varepsilon_{i_k,t}(N))\ge
F_{\cc_{i_{m_0}}}(\varepsilon_{i_{m_0},0}(N))\prod_{k=m_0+1}^{m}\prod_{t=0}^{b_{i_k}-1} F_{\cc_{i_k}}(\varepsilon_{i_k,t}(N)).
\end{equation*}
Thus in order to complete the proof, it remains to show that
\begin{equation}
\label{product_power_m}
F_{\cc_{i_{m_0}}}(\varepsilon_{i_{m_0},0}(N))\prod_{k=m_0+1}^{m}\prod_{t=0}^{b_{i_k}-1} F_{\cc_{i_k}}(\varepsilon_{i_k,t}(N))\ge 0.01\cdot 1.00001^{(m-m_0)/3-1}.
\end{equation}
We do this by using induction on $p=m-m_0$.
If $m-m_0 \le 3$, the statement follows from the estimates (\ref{mainineq_negpert_}), (\ref{mainineq_no_large_positive_}) and (\ref{use_logconcave}).
Suppose that (\ref{product_power_m}) holds for all $m-m_0\le p$ where $p \ge 3$. We prove this inequality for $m-m_0=p+1 \ge 4$. 
We distinguish cases depending on which term in \eqref{mainineq_no_large_negative_} is maximal for $i_k = i_{m_0}$.

If the first term is maximal, then
$F_{\cc_{i_{m_0}}}(\varepsilon_{{i_{m_0}},0}(N))>1.0001.$
In this case, we apply \eqref{use_logconcave} for $\varepsilon_{{i_{m_0+1}},t}(N)$ for any $1 \le t \le b_{i_{m_0+1}}-1$ and use the induction hypothesis for $m-m_0 -1$ on the remaining factors to show \eqref{product_power_m}.

If the maximum is attained at the second term of (\ref{mainineq_no_large_negative_}), we have 

\[
F_{\cc_{i_{m_0}}}(\varepsilon_{i_{m_0},0}(N))\prod_{k=m_0+1}^{m}\prod_{t=0}^{b_{i_k}-1} F_{\cc_{i_k}}(\varepsilon_{i_k,t}(N)) > F_{\cc_{i_{m_0+1}}}(\varepsilon_{i_{m_0+1},0}(N))\prod_{k=m_0+2}^{m}\prod_{t=0}^{b_{i_k}-1} F_{\cc_{i_k}}(\varepsilon_{i_k,t}(N))
\]
and we apply again the induction hypothesis for $m - m_0 -1$.

If the maximum is attained at the third or fourth term, we remove by \eqref{use_logconcave} again contributions of $t \geq 1$ and apply the induction hypothesis for $m-m_0-2$ respectively $m-m_0-3$. 
 This proves \eqref{product_power_m} which finishes the proof.
\end{proof}

\begin{rmk*}
    We note that the proof above actually shows \eqref{uniform_bound}:
    One can verify that $k_0$ in the above proof can be chosen uniformly in $\alpha \in E_3$ and since for every $n \in \N$, $\inf_{\alpha \in E_3} \lVert n\alpha \rVert >0$, all factors $P_{q_k}$ with $k < k_0$ are also uniformly bounded away from $0$.
\end{rmk*}

\section{Construction of $\mathcal{F}$}
\label{Hk_section}

In this section, we prove Lemma \ref{new_idea_lem}, that is, we construct pseudo-concave functions $F_{\cc(\alpha,k)}(\varepsilon)$ that serve as lower bounds of $H_k(\alpha,\varepsilon)$. We start with an elementary proposition that contains estimates that will be useful throughout the subsequent calculations.

\begin{prop}\label{useful_estimates}
    For any $\cc \in \{1,2,3\}^9$, we have the following estimates:

\begin{align}
\label{325}
\cev{\cc}_{\max}-\cev{\cc}_{\min}&<\frac{3}{25}.\\
\label{useful_2}\varepsilon_{\cc}^{\min} &\ge -\lambda_{\cc}^{\min}.
\end{align}
\end{prop}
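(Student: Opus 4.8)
The plan is to prove the two estimates in Proposition \ref{useful_estimates} separately, both by reducing everything to explicit continued fraction manipulations using the formulas from Definition/Proposition \ref{prop_lambda_def}.

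For \eqref{325}, I would start from the closed forms $\cev{\cc}_{\max} = [0;c_4,c_3,c_2,c_1,\overline{1,3}]$ and $\cev{\cc}_{\min} = [0;c_4,c_3,c_2,c_1,\overline{3,1}]$. Both are continued fractions with the same first four partial quotients $c_4,c_3,c_2,c_1 \in \{1,2,3\}$, differing only in the tail $[0;\overline{1,3}]$ versus $[0;\overline{3,1}]$. Writing $x = [0;\overline{1,3}] = \frac{-3+\sqrt{21}}{2}\cdot\tfrac{1}{3}$ (or whatever the exact quadratic surd is) and $y = [0;\overline{3,1}]$, these two tail values differ by an absolute constant. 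Then I would use the standard Lipschitz estimate for continued fractions: if two numbers agree in their first $j$ partial quotients, the difference of the full values is at most $\frac{|x-y|}{(q_j)^2}$ where $q_j = \langle c_4,c_3,c_2,c_1\rangle$ is the continuant of the common prefix. Since all $c_i \ge 1$, the smallest possible continuant $\langle c_4,c_3,c_2,c_1\rangle$ over $\{1,2,3\}^4$ is $\langle 1,1,1,1\rangle = 5$ (the Fibonacci value), so $(q_j)^2 \ge 25$, and $|x - y| < 3$ (both tails lie in $(0,1)$, and one computes their difference is well below $3$). Combining gives $\cev{\cc}_{\max} - \cev{\cc}_{\min} < 3/25$. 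The slightly delicate part is getting the numerator bound $|x-y| < 3$ sharp enough — but since $[0;\overline{1,3}], [0;\overline{3,1}] \in (0,1)$ trivially, their difference is less than $1 < 3$, so in fact there is room to spare.

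For \eqref{useful_2}, recall $\varepsilon_{\cc}^{\min} = -\lambda_{\cc}^{\max} + \lambda_{\cc,1}^{\min} - 10^{-100}$ from \eqref{eps_min_max_new}. So the claim $\varepsilon_{\cc}^{\min} \ge -\lambda_{\cc}^{\min}$ is equivalent to $\lambda_{\cc}^{\max} - \lambda_{\cc}^{\min} \ge \lambda_{\cc,1}^{\min} - 10^{-100}$, i.e. it suffices to show $\lambda_{\cc}^{\max} - \lambda_{\cc}^{\min} \ge \lambda_{\cc,1}^{\min}$ (the $10^{-100}$ buffer only helps). Now $\lambda_{\cc}^{\max} - \lambda_{\cc}^{\min} = \frac{1}{\vec{\cc}_{\min}+\cev{\cc}_{\min}} - \frac{1}{\vec{\cc}_{\max}+\cev{\cc}_{\max}}$, and by \eqref{lambda_kj_def} and \eqref{deltaratio}-type relations one has $\lambda_{k,1} = q_k\delta_{k+1} = \frac{q_k}{q_{k+1}}\lambda_{k+1} \le \frac{1}{a_{k+1}}\lambda_{k+1} \le \lambda_{k+1}$, and more to the point $\lambda_{k,1}$ is comparable to $\lambda_k \delta_{k+1}/\delta_k$ which is small. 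The cleanest route: use the identity $\lambda_{k,1} = q_k \delta_{k+1}$ together with $\delta_{k+1} = \delta_{k-1} - a_{k+1}\delta_k$, and the fact that $\lambda_k$ ranges over an interval $[\lambda_{\cc}^{\min}, \lambda_{\cc}^{\max}]$ of length governed by the spread in $\vec{\cc}, \cev{\cc}$. I would bound $\lambda_{\cc,1}^{\min}$ from above by its explicit formula and bound $\lambda_{\cc}^{\max}-\lambda_{\cc}^{\min}$ from below using \eqref{325} (which controls the $\cev{\cc}$ spread) plus the analogous spread bound for $\vec{\cc}_{\max} - \vec{\cc}_{\min}$ — the latter is a continued fraction difference with common prefix $c_5,\dots,c_9$ of length $5$, so it is even smaller. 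Actually the $\vec{\cc}$ spread works against us here (it makes the interval longer, which is good), so I would lower-bound $\lambda_{\cc}^{\max}-\lambda_{\cc}^{\min}$ by keeping the $\cev{\cc}$ spread term.

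The main obstacle I anticipate is \eqref{useful_2}: unlike \eqref{325}, which is a clean two-line continued-fraction Lipschitz estimate, \eqref{useful_2} requires comparing two genuinely different quantities — the \emph{length} of the interval $[\lambda_{\cc}^{\min},\lambda_{\cc}^{\max}]$ versus the \emph{size} of $\lambda_{\cc,1}^{\min}$ — and the inequality could conceivably be tight for some bad choice of $\cc$ (e.g. when all $c_i$ are large, making $\lambda_k$ small but the interval also narrow). I would handle this by splitting into cases on $c_5$ (which most directly controls $\lambda_{\cc,1}$ via $\lambda_{k,1} \approx \lambda_k/c_{5}$-ish) or, if a uniform argument is available, by using the crude bounds $\lambda_{\cc,1}^{\min} \le \lambda_{\cc}^{\max}\cdot [0;c_5,\dots]$-type factor $< \lambda_{\cc}^{\max}/2$ together with a matching lower bound $\lambda_{\cc}^{\max} - \lambda_{\cc}^{\min} \ge \lambda_{\cc}^{\min}\lambda_{\cc}^{\max}(\cev{\cc}_{\max}-\cev{\cc}_{\min}) \ge \lambda_{\cc}^{\min}\lambda_{\cc}^{\max}\cdot(\text{something})$ — though making the constants line up will require care, which is presumably why the authors bundle it into this proposition rather than inlining it.
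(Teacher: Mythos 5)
Your treatment of \eqref{325} is essentially the paper's argument: both proofs apply the exact difference formula for two continued fractions sharing the four partial quotients $c_4,c_3,c_2,c_1$, bound the denominator below by $\langle c_4,c_3,c_2,c_1\rangle^2\ge F_5^2=25$, and bound the difference of the two periodic tails crudely (whether you treat the tails as elements of $(0,1)$ or as complete quotients $\ge 1$, the resulting bound is below $3/25$), so this part is fine.

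The second estimate, however, contains a sign error that invalidates your entire strategy. From $\varepsilon_{\cc}^{\min}=-\lambda_{\cc}^{\max}+\lambda_{\cc,1}^{\min}-\dk$, the claim $\varepsilon_{\cc}^{\min}\ge-\lambda_{\cc}^{\min}$ is equivalent to
\[
\lambda_{\cc}^{\max}-\lambda_{\cc}^{\min}\;\le\;\lambda_{\cc,1}^{\min}-\dk,
\]
not to $\lambda_{\cc}^{\max}-\lambda_{\cc}^{\min}\ge\lambda_{\cc,1}^{\min}-\dk$ as you write. You must show that the \emph{spread} of $\lambda_k$ over the pattern class $\mathfrak{A}(\cc,k)$ is \emph{small} compared with $\lambda_{\cc,1}^{\min}$, whereas your plan (lower-bounding $\lambda_{\cc}^{\max}-\lambda_{\cc}^{\min}$ via \eqref{325} and upper-bounding $\lambda_{\cc,1}^{\min}$) is aimed at the reversed inequality. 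That reversed inequality is plainly false: for $\cc=(3,3,\ldots,3)$ the nine fixed partial quotients pin down $\cev{\cc}$ and $\vec{\cc}$ to within $O(1/q_4^2)$, so the spread $\lambda_{\cc}^{\max}-\lambda_{\cc}^{\min}$ is of order $10^{-5}$, while $\lambda_{\cc,1}^{\min}\approx 0.08$; your own closing remark that large $c_i$ make the interval narrow describes exactly the situation that kills your version of the inequality, and no case split on $c_5$ can rescue it.

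The correct comparison goes the other way, and this is what the paper does: using \eqref{325} and the analogous bound for $\vec{\cc}_{\max}-\vec{\cc}_{\min}$ (same formula, common prefix $c_6,\ldots,c_9$), one gets
\[
\lambda_{\cc}^{\max}-\lambda_{\cc}^{\min}
=\frac{(\vec{\cc}_{\max}-\vec{\cc}_{\min})+(\cev{\cc}_{\max}-\cev{\cc}_{\min})}{(\vec{\cc}_{\min}+\cev{\cc}_{\min})(\vec{\cc}_{\max}+\cev{\cc}_{\max})}
\;\le\;\frac{6}{25\,(c_5+0.5)^2},
\]
while $\lambda_{k,1}=\frac{q_k}{q_{k+1}}\lambda_{k+1}$ together with $q_{k+1}/q_k<c_5+1$ and $\lambda_{k+1}=\frac{1}{\alpha_{k+2}+\cev{\alpha}_{k+1}}\ge\frac{1}{4.6}$ gives $\lambda_{\cc,1}^{\min}\ge\frac{1}{4.6(c_5+1)}$. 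Comparing the two bounds for $c_5\in\{1,2,3\}$ (the case $c_5=1$ is the tight one: $0.1067\ldots<0.1086\ldots$) yields \eqref{useful_2}. So the missing ingredient is not a sharper estimate but the correct orientation of the inequality you reduced to.
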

\begin{proof}
    We estimate $\cev{\cc}_{\max}-\cev{\cc}_{\min}$ using the well-known formula
\begin{equation}
\label{alpha_beta_diff}
[a_0;a_1,\ldots,a_n,\beta_{n+1}]-[a_0;a_1,\ldots,a_n,\alpha_{n+1}]=(-1)^{n+1}\frac{\beta_{n+1}-\alpha_{n+1}}{q^2_n(\alpha_{n+1}+\cev{\alpha_{n}})(\beta_{n+1}+\cev{\alpha_{n}})}.
\end{equation}
  As the first four partial quotients of $\cev{\cc}_{\max}$, $\cev{\cc}_{\min}$ coincide, 
  we apply (\ref{alpha_beta_diff}) for $n = 4$. Note that $q_4\ge F_5=5$ in (\ref{alpha_beta_diff}), where $F_n$ is $n$-th Fibonacci number. Using trivial bounds on the right-hand side of (\ref{alpha_beta_diff}), we obtain \eqref{325}.\\
  Note that \eqref{useful_2}
  is equivalent to showing $\lambda_{\cc}^{\max} - \lambda_{\cc,1}^{\min} \ge \lambda_{\cc}^{\min} + \dk$. Using \eqref{325}, we observe that
  \begin{equation*}
\lambda_{\cc}^{\max}-\lambda_{\cc}^{\min}=\frac{(\vec{\cc}_{\max}-\vec{\cc}_{\min})+(\cev{\cc}_{\max}-\cev{\cc}_{\min})}{(\vec{\cc}_{\min}+\cev{\cc}_{\min})(\vec{\cc}_{\max}+\cev{\cc}_{\max})}\le
\frac{6}{25(c_5+0.5)^2}.
\end{equation*}
Since $q_{k}/q_{k+1}<1/{(c_5+1)}$, we see that
$
\lambda_{\cc,1}^{\min}\ge\frac{1}{4.6(c_5+1)}.
$
and by $c_5 \ge 1$, the statement follows.
\end{proof}

We recall that
$$
H_k(\alpha,\varepsilon) =
2\pi \lvert \varepsilon + \lambda_k \rvert \prod_{n=1}^{\lfloor q_k/2\rfloor } h_{n,k}(\alpha,\varepsilon),
$$
where $h_{n,k}$ is as in (\ref{hkdef}). 
By Definition/Proposition \ref{prop_lambda_def}, for any $\alpha \in \mathfrak{A}(\cc,k)$ we have
\footnote{Here and in what follows, the statements should be interpreted as follows:
There exists a function $\delta(k)$ such that $\lim_{k \to \infty}\delta(k) = 0$
and with $k_{\ell}(\cc,\alpha):= \{k \in \N: (a_{k-3}(\alpha),a_{k-2}(\alpha),\ldots, a_{k+4}(\alpha),a_{k+5}(\alpha))=\cc\}$, we have e.g. 
\[
\alpha_{k_{\ell}(\cc)+1}(\alpha)+ \delta(k_{\ell}(\cc)) \ge \vec{\cc}_{\min}.
\] Since $\{1,2,3\}^9$ is a finite set, the convergence is uniform in $\cc$ and by definition of $\vec{\cc}_{\min}$, also uniform in $E_3$.}
\begin{equation}
\label{prop8est}
\vec{\cc}_{\min}-o(1) \le \alpha_{k+1}(\alpha)\le \vec{\cc}_{\max}+o(1),\quad
\cev{\cc}_{\min}-o(1)\le \cev{\alpha}_{k}(\alpha) \le \cev{\cc}_{\max}+o(1), \quad
\lambda_{\cc}^{\min}-o(1)\le \lambda_{k}(\alpha)\le \lambda_{\cc}^{\max}+o(1).
\end{equation}

By \eqref{useful_2}, for any $\varepsilon \in (\varepsilon_{\cc}^{\min},\varepsilon_{\cc}^{\max})$ we have
$\varepsilon+\lambda_{\cc}^{\min} >0$ and hence by \eqref{prop8est},
\begin{equation}
\label{Hk_step1}
H_k(\varepsilon)\ge
2\pi(\varepsilon+\lambda_{\cc}^{\min}-o(1)) \prod_{n=1}^{\lfloor q_k/2\rfloor } h_{n,k}(\varepsilon), \quad  \varepsilon \in (\varepsilon_{\cc}^{\min},\varepsilon_{\cc}^{\max}).
\end{equation}
The factors $h_{n,k}(\varepsilon)$ will be estimated as follows. For a fixed small parameter $n_0 \in \N$ and $1\le n\le n_0$ we will construct functions $f_{\cc, n}(\varepsilon)$ such that
for any $\alpha \in \mathfrak{A}(\cc,k)$
\begin{equation}
\label{fn_cond}
h_{n,k}(\alpha,\varepsilon) + o(1) \ge f_{\cc, n}(\varepsilon),\quad \varepsilon\in(\varepsilon_{\cc}^{\min},\varepsilon_{\cc}^{\max}).
\end{equation}
Further we construct a function $f_{\cc,\infty}(\varepsilon)$ such that
\begin{equation}
\label{fn_cond_inf}
\prod_{n=n_0+1}^{\lfloor q_k/2\rfloor }h_{n,k}(\alpha,\varepsilon)\ge f_{\cc, \infty}(\varepsilon) - o(1),\quad \varepsilon\in(\varepsilon_{\cc}^{\min},\varepsilon_{\cc}^{\max}) .
\end{equation}
Combining (\ref{Hk_step1}), (\ref{fn_cond}) and (\ref{fn_cond_inf}) we obtain
\begin{equation*}
H_k(\alpha,\varepsilon) + o(1)\ge
2\pi(\varepsilon+\lambda_{\cc}^{\min}) f_{\cc, \infty}(\varepsilon) \prod_{n=1}^{n_0 } f_{\cc,n}(\varepsilon)=:F_{\cc}(\varepsilon).
\end{equation*}

\subsubsection*{Construction of $f_{\cc,n}$.}
\begin{lem}\label{fcn_lem}
\label{f_n_def}
For any $\cc \in \{1,2,3\}^9$ and $1 \le n \le n_0$, let
\begin{equation*}
g_{\cc,n}:=
\begin{cases}
\Bigl(1-\lambda_{\cc}^{\max}\frac{\{n \cev{\cc}_{\max}\}-0.5}{n}\Bigr)^2\quad \text{if}\ \{n\cev{\cc}_{\max}\}\ge 0.5\  \text{and}\  \lfloor n \cev{\cc}_{\max}\rfloor = \lfloor n \cev{\cc}_{\min}\rfloor , \\
\Bigl(1-\lambda_{\cc}^{\min}\frac{\{n\cev{\cc}_{\max}\}-0.5}{n}\Bigr)^2 \quad \text{if}\ \{n\cev{\cc}_{\max}\}<0.5\  \text{and}\  \lfloor n \cev{\cc}_{\max}\rfloor = \lfloor n \cev{\cc}_{\min}\rfloor,\\
\Bigl(1-\frac{\lambda_{\cc}^{\max}}{2n}\Bigr)^2 \quad \text{if} \ \
\lfloor n\cev{\cc}_{\max}\rfloor\ne\lfloor n \cev{\cc}_{\min}\rfloor,
\end{cases}
\end{equation*}
and
\begin{equation*}
e_{\cc,n}(\varepsilon):=\max\biggl(\frac{(\varepsilon+\lambda_{\cc}^{\max}/2)^{2}}{n^2}, \frac{(\varepsilon+\lambda_{\cc}^{\min}/2)^{2}}{n^2}\biggr).
\end{equation*}

Then $f_{\cc,n}(\varepsilon):=g_{\cc,n}-e_{\cc,n}(\varepsilon)$ is concave. Furthermore, we have
\begin{equation}\label{fn_cond_}h_{n,k}(\alpha,\varepsilon)\ge f_{\cc(\alpha,k), n}(\varepsilon) - o(1),\quad \varepsilon\in (\varepsilon_{\cc}^{\min},\varepsilon_{\cc}^{\max}).\end{equation}
\end{lem}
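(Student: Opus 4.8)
The plan is to verify the two assertions separately: first that $f_{\cc,n}(\varepsilon) = g_{\cc,n} - e_{\cc,n}(\varepsilon)$ is concave, and second that it is a valid lower bound for $h_{n,k}(\alpha,\varepsilon)$ up to an $o(1)$ error. For concavity, note that $g_{\cc,n}$ is a constant (it does not depend on $\varepsilon$), so the only thing to check is that $-e_{\cc,n}(\varepsilon)$ is concave, i.e. that $e_{\cc,n}(\varepsilon)$ is convex. But $e_{\cc,n}(\varepsilon)$ is the pointwise maximum of the two functions $\varepsilon \mapsto (\varepsilon + \lambda_{\cc}^{\max}/2)^2/n^2$ and $\varepsilon \mapsto (\varepsilon + \lambda_{\cc}^{\min}/2)^2/n^2$, each of which is a (convex) parabola in $\varepsilon$; a maximum of finitely many convex functions is convex, so $e_{\cc,n}$ is convex and $f_{\cc,n}$ is concave.

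For the lower bound \eqref{fn_cond_}, recall from \eqref{hkdef} that
\[
h_{n,k}(\alpha,\varepsilon) = \Bigl\lvert A_{n,k}(\alpha)^2 - B_{k}(\alpha,\varepsilon)^2 \Bigr\rvert, \quad
A_{n,k}(\alpha) := 1 - \lambda_k \frac{\{n\cev{\alpha}_k\} - \tfrac12}{n}, \quad
B_{k}(\alpha,\varepsilon) := \frac{\varepsilon + \tfrac{\lambda_k}{2}}{n}.
\]
I would first lower-bound $A_{n,k}(\alpha)^2$ by $g_{\cc,n}$ up to $o(1)$: using \eqref{prop8est}, as $k \to \infty$ along $\mathfrak{A}(\cc,k)$ we have $\cev{\alpha}_k(\alpha) \to $ some value in $[\cev{\cc}_{\min}, \cev{\cc}_{\max}]$ and $\lambda_k(\alpha) \in [\lambda_{\cc}^{\min}, \lambda_{\cc}^{\max}] + o(1)$. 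Since $A_{n,k} > 0$ for all large $k$ (as $\lambda_k < 1$ and $n \ge 1$), minimizing $A_{n,k}^2$ amounts to minimizing $A_{n,k}$. When $\lfloor n\cev{\cc}_{\max}\rfloor = \lfloor n\cev{\cc}_{\min}\rfloor$, the fractional part $\{n\cev{\alpha}_k\}$ ranges continuously (up to $o(1)$) over $[\{n\cev{\cc}_{\min}\},\{n\cev{\cc}_{\max}\}]$; the quantity $1 - \lambda \frac{t - 1/2}{n}$ is minimized in $t$ at $t = \{n\cev{\cc}_{\max}\}$ (which is where $\frac{t-1/2}{n}$ is largest, \emph{provided} $\{n\cev{\cc}_{\max}\} \ge 1/2$ so the sign forces us to take $\lambda = \lambda_{\cc}^{\max}$; otherwise $\frac{t-1/2}{n} < 0$ and we take $\lambda = \lambda_{\cc}^{\min}$), giving exactly the two cases in the definition of $g_{\cc,n}$. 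When $\lfloor n\cev{\cc}_{\max}\rfloor \ne \lfloor n\cev{\cc}_{\min}\rfloor$ the fractional part can be anything in $[0,1)$, so $\bigl\lvert\frac{t-1/2}{n}\bigr\rvert \le \frac{1}{2n}$ and hence $A_{n,k} \ge 1 - \frac{\lambda_{\cc}^{\max}}{2n}$, which is the third case. In all cases $A_{n,k}(\alpha)^2 \ge g_{\cc,n} - o(1)$.

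Next I would upper-bound $B_k(\alpha,\varepsilon)^2$ by $e_{\cc,n}(\varepsilon)$: since $\lambda_k(\alpha) \in [\lambda_{\cc}^{\min},\lambda_{\cc}^{\max}] + o(1)$ and $\varepsilon + \lambda_k/2$ may be of either sign, the square $(\varepsilon + \lambda_k/2)^2/n^2$ is at most the maximum of its values at the two endpoints $\lambda_k = \lambda_{\cc}^{\min}$ and $\lambda_k = \lambda_{\cc}^{\max}$ (a parabola in $\lambda_k$ attains its max on an interval at an endpoint), plus $o(1)$ — this is exactly $e_{\cc,n}(\varepsilon) + o(1)$. Finally, one must pass from the inequality on $A^2 - B^2$ to one on $\lvert A^2 - B^2\rvert$: if $A_{n,k}^2 - B_k^2 \ge 0$ we are done since $h_{n,k} = A_{n,k}^2 - B_k^2 \ge g_{\cc,n} - e_{\cc,n}(\varepsilon) - o(1)$; if $A_{n,k}^2 - B_k^2 < 0$ then $h_{n,k} = B_k^2 - A_{n,k}^2 > 0 \ge g_{\cc,n} - e_{\cc,n}(\varepsilon)$ would require $g_{\cc,n} \le e_{\cc,n}(\varepsilon)$, and I would argue that on the range $\varepsilon \in (\varepsilon_{\cc}^{\min},\varepsilon_{\cc}^{\max})$ with $\varepsilon_{\cc}^{\min} \ge -\lambda_{\cc}^{\min}$ from \eqref{useful_2} this does not cause a problem — more precisely, $h_{n,k} \ge \min(A_{n,k}^2 - B_k^2, 0) \ge g_{\cc,n} - e_{\cc,n}(\varepsilon) - o(1)$ is what we need, and this holds trivially whenever $g_{\cc,n} \le e_{\cc,n}(\varepsilon)$ (then the right side is $\le 0 \le h_{n,k}$) and by the two-sided estimate otherwise. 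The main obstacle is the case analysis in bounding $A_{n,k}$: one must be careful that the monotonicity of $t \mapsto 1 - \lambda \frac{t-1/2}{n}$ interacts correctly with the sign of $\lambda$-dependence, which is precisely why the definition of $g_{\cc,n}$ splits according to whether $\{n\cev{\cc}_{\max}\} \ge 1/2$, and one must also justify that the $o(1)$ terms are uniform in $\cc$ (which follows since $\{1,2,3\}^9$ is finite) and in $\varepsilon$ on the bounded range $(\varepsilon_{\cc}^{\min},\varepsilon_{\cc}^{\max}) \subseteq (-1,1)$.
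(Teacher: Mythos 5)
Your proposal is correct and follows essentially the same route as the paper: concavity via $e_{\cc,n}$ being a maximum of convex parabolas, dropping the absolute value in $h_{n,k}$, and minimizing $1-\lambda_k\frac{t-1/2}{n}$ over the admissible ranges of $\lambda_k$ and $t=\{n\cev{\alpha}_k\}$ according to the sign of $\{n\cev{\cc}_{\max}\}-\tfrac12$ (your case split with the absolute value is an unnecessary detour, since $\lvert A^2-B^2\rvert \ge A^2-B^2$ settles it directly, but it is not wrong). The only step you assert rather than prove is that $\{n\cev{\alpha}_k\}$ stays within $o(1)$ of $[\{n\cev{\cc}_{\min}\},\{n\cev{\cc}_{\max}\}]$ when the floors agree; the paper justifies this by noting that $\cev{\cc}_{\max}$ is irrational, so $\|n\cev{\cc}_{\max}\|>\delta$ for $1\le n\le n_0$, which prevents $n\cev{\alpha}_k$ (which may lie slightly outside $[n\cev{\cc}_{\min},n\cev{\cc}_{\max}]$) from crossing an integer.
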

\begin{proof}
Since $g_{\cc,n}$ does not depend on $\varepsilon$, concavity follows from the fact that
$e_{\cc,n}(\varepsilon)$ is, as the maximum of two convex functions, convex.
Thus we are left to prove that (\ref{fn_cond_}) is satisfied.
Trivally,
\[ h_{n,k}(\alpha,\varepsilon) := \Bigg\lvert\Bigg(1 - \lambda_k\frac{\left\{n\cev{\alpha}_k\right\} - \frac{1}{2}}{n}\Bigg)^2 - \frac{\left(\varepsilon + \frac{\lambda_k}{2}\right)^2}{n^2}\Bigg\rvert
\ge \Bigg(1 - \lambda_k\frac{\left\{n\cev{\alpha}_k\right\} - \frac{1}{2}}{n}\Bigg)^2 - \frac{\left(\varepsilon + \frac{\lambda_k}{2}\right)^2}{n^2}.\]
Since 
\begin{equation}\label{close_to_lambda}
\lambda_{\cc}^{\min}-o(1) \le \lambda_k \le  \lambda_{\cc}^{\max} + o(1),
\end{equation}
we have $\frac{\left(\varepsilon+\frac{\lambda_k}{2}\right)^2}{n^2}\le  e_{\cc,n}(\varepsilon) + o(1)$, thus we are left to prove that
\begin{equation}
    \label{close_to_g}
\Bigg(1 - \lambda_k\frac{\left\{n\cev{\alpha}_k\right\} - \frac{1}{2}}{n}\Bigg)^2\ge g_{\cc,n}- o(1), \quad k \to \infty.
\end{equation}
If 
$\lfloor n\cev{\cc}_{\max}\rfloor\ne\lfloor n \cev{\cc}_{\min}\rfloor$,
then \eqref{close_to_g} follows immediately from \eqref{close_to_lambda}, so we can assume $\lfloor n\cev{\cc}_{\max}\rfloor =\lfloor n \cev{\cc}_{\min}\rfloor$.
Since $\cev{\cc}_{\max}$ is irrational, there exists $\delta > 0$ such that
\[\min_{1 \le n \le n_0} \lVert n \cev{\cc}_{\max}\rVert > \delta.\]
Since $\cev{\alpha}_k \le \cev{\cc}_{\max} + o(1)$, for $k$ sufficiently large, $\cev{\alpha}_k \le \cev{\cc}_{\max} + \frac{\delta}{n_0}$,
which implies
$
\lfloor n\cev{\alpha}_k \rfloor \le \lfloor n\cev{\cc}_{\max} \rfloor, 1 \le n \le n_0
$ and by the same argumentation, we get $\lfloor n\cev{\cc}_{\min} \rfloor \le \lfloor n\cev{\alpha}_k \rfloor$.
Hence \eqref{close_to_lambda} implies
$
\{n \cev{\cc}_{\min}\}-o(1) \le \{n\cev{\alpha}_k\} \le \{n \cev{\cc}_{\max}\} + o(1)$, so \eqref{close_to_g} also follows if $\lfloor n\cev{\cc}_{\max}\rfloor=\lfloor n \cev{\cc}_{\min}\rfloor$.\\
\end{proof}

\subsubsection*{Construction of $f_{\cc,\infty}$.}

Next, we construct the function $f_{\cc,\infty}$ satisfying 
\begin{equation}
\label{prod_from_n0}
\prod_{n=n_0+1}^{\lfloor q_k/2\rfloor }h_{n,k}(\alpha,\varepsilon)\ge f_{\cc, \infty}(\varepsilon)  - o(1),\quad \varepsilon\in(\varepsilon_{\cc}^{\min},\varepsilon_{\cc}^{\max}).
\end{equation}
The function is defined by
\begin{equation*} 
f_{\cc,\infty}(\varepsilon)=1-\Biggl(2\lambda_{\cc}^{\max}\biggl(W_{\cc}(n_0, T,m)
 + \frac{9}{2}\frac{1+\log{T}}{T}\biggr)+\frac{1}{n_0}\biggl(\frac{(\lambda_{\cc}^{\max})^2}{4}+(\varepsilon+0.5\lambda_{\cc}^{\max})^2+E^2_{\cc}(n_0,\varepsilon)\biggr)\Biggr).
\end{equation*}
The value $W_{\cc}(n_0, T,m)$ and the function $E^2_{\cc}(n_0,\varepsilon)$ will be defined in this subsection.\\

Note that for fixed $k,\alpha,\varepsilon$, the sequence $h_{n,k}(\alpha,\varepsilon)$ tends to $1$ as $n$ grows. That is why it is convenient to analyze the logarithm of the left-hand side of (\ref{prod_from_n0}) which replaces the product with a sum. We use the following lemma \cite[Lemma 9]{hauke_extreme}, which follows from Taylor approximation of the logarithm.
\begin{prop}
\label{Hauke2021lem}
Let $x_n$ be a finite sequence of real numbers that satisfy $|x_n|<\frac{1}{2}$ and $|x_n|<\frac{c}{n}$ for some $c > 0$. Then
\begin{equation*}
\prod\limits_{n=N}^{M}(1-x_n)\ge 1-\Biggl(\Bigl|\sum\limits_{n=N}^M x_n\Bigr|+\frac{c^2}{N-1}\Biggr).
\end{equation*}
\end{prop}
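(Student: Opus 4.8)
The plan is to take logarithms, so that the product becomes a sum, and then control the error from a second-order Taylor expansion. (I read the term $\tfrac{c^2}{n-1}$ in the statement as $\tfrac{c^2}{N-1}$, the free index $n$ evidently being a typo for the lower summation bound $N$.) The first step is the elementary one-variable inequality
\[
\log(1-x) \ge -x - x^2 \qquad \text{for } |x| < \tfrac12 .
\]
This follows by writing $\log(1-x) = -x - \sum_{k \ge 2} x^k/k$ and bounding the tail: $\bigl|\sum_{k \ge 2} x^k/k\bigr| \le \tfrac12 \sum_{k \ge 2} |x|^k = \dfrac{|x|^2}{2(1-|x|)} \le |x|^2$, where the last inequality uses $|x| < \tfrac12$.

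Next I would apply this to each factor $1-x_n$ (which is legitimate, and in particular each factor is positive, since $|x_n| < \tfrac12$) and sum over $n$, giving
\[
\log \prod_{n=N}^{M}(1-x_n) = \sum_{n=N}^{M} \log(1-x_n) \ge -\sum_{n=N}^{M} x_n - \sum_{n=N}^{M} x_n^2 .
\]
For the quadratic remainder, the hypothesis $|x_n| < c/n$ yields $\sum_{n=N}^{M} x_n^2 \le c^2 \sum_{n=N}^{M} n^{-2} \le c^2 \sum_{n \ge N} n^{-2} \le c^2 \int_{N-1}^{\infty} t^{-2}\,dt = \dfrac{c^2}{N-1}$, using that $t \mapsto t^{-2}$ is decreasing.

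Finally, exponentiating both sides and using $e^{-y} \ge 1-y$ for all real $y$, together with the trivial bound $\sum_{n=N}^M x_n \le \bigl|\sum_{n=N}^M x_n\bigr|$, gives
\[
\prod_{n=N}^{M}(1-x_n) \ge \exp\!\Bigl(-\sum_{n=N}^{M} x_n - \frac{c^2}{N-1}\Bigr) \ge 1 - \Bigl(\Bigl|\sum_{n=N}^{M} x_n\Bigr| + \frac{c^2}{N-1}\Bigr),
\]
which is the claimed inequality. There is no genuine obstacle here: the argument is a routine Taylor-expansion estimate, and the only point requiring slight care is pinning down the constant in the tail bound, which is precisely why the hypothesis $|x_n| < \tfrac12$ (rather than merely $|x_n|$ small) is imposed.
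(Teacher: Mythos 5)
Your proof is correct, and your reading of the error term as $c^2/(N-1)$ matches how the lemma is actually applied in the paper (with $N=n_0+1$ giving $c^2/n_0$). The paper does not reproduce a proof — it cites \cite[Lemma 9]{hauke_extreme} and notes it "follows from Taylor approximation of the logarithm" — which is exactly the argument you give, so your approach is essentially the same.
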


\begin{lem}\label{taylor_lem}
Let $\alpha \in E_3$ and $T \ge n_0 +1\ge 11$. Defining
$
S_{\ell}(\alpha):=\sum\limits_{n=1}^{\ell}1/2-\{n\alpha\}, \ell \in \N,$ we have
 \begin{equation}\label{step_between}
 \prod_{n=n_0+1}^{\lfloor q_k/2\rfloor } h_{n,k}(\alpha,\varepsilon)\ge 
 1-\Biggl(2\lambda_{\cc}^{\max}\left(\left|\sum_{\ell=n_0+1}^{T}\frac{S_{\ell}(\cev{\alpha}_k)}{\ell(\ell +1)}\right| 
 + \frac{9}{2}\frac{1+\log{T}}{T}\right)+\frac{1}{n_0}\biggl(\frac{(\lambda_{\cc}^{\max})^2}{4}+(\varepsilon+0.5\lambda_{\cc}^{\max})^2+E^2_{\cc}(n_0,\varepsilon)\biggr)\Biggr) - o(1)
 \end{equation}
 where 
 \[
 E_{\cc}(n_0,\varepsilon) := \lambda_{\cc}^{\max}+\frac{\varepsilon\lambda_{\cc}^{\max}+\varepsilon^{2}}{n_0+1}.
 \]
\end{lem}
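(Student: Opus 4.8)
\emph{Plan.} I would prove \eqref{step_between} in three moves: pass from $\prod h_{n,k}$ to a product $\prod(1-\tilde x_n)$ with small $\tilde x_n$; apply the Taylor-type inequality of Proposition \ref{Hauke2021lem} to replace this product by $1$ minus a sum; and bound the surviving linear sum by Abel summation together with the Ostrowski estimate \eqref{Ostr_est}. For the first move, fix $n>n_0\ge 10$; by \eqref{close_to_lambda} we have $0<\lambda_k\le\lambda_{\cc}^{\max}+o(1)<1$, and $\varepsilon\in(\varepsilon_{\cc}^{\min},\varepsilon_{\cc}^{\max})\subseteq(-1,1)$. Putting $A_n:=\lambda_k\frac{\{n\cev{\alpha}_k\}-1/2}{n}$ and $B_n:=\frac{\varepsilon+\lambda_k/2}{n}$, one has $|A_n|<\frac1{2n}$ and $|B_n|<\frac3{2n}$, so $(1-A_n)^2-B_n^2>0$; thus the absolute value in $h_{n,k}$ disappears, and discarding the nonnegative $A_n^2$,
\[ h_{n,k}(\alpha,\varepsilon)=(1-A_n)^2-B_n^2\ge 1-\tilde x_n,\qquad \tilde x_n:=2A_n+B_n^2, \]
where $|\tilde x_n|<\frac12$ and $1-\tilde x_n>0$, whence $\prod_{n=n_0+1}^{\lfloor q_k/2\rfloor}h_{n,k}\ge\prod_{n=n_0+1}^{\lfloor q_k/2\rfloor}(1-\tilde x_n)$.

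\emph{Taylor estimate.} Since $n|\tilde x_n|\le\lambda_k+\frac{(\varepsilon+\lambda_k/2)^2}{n_0+1}$, Proposition \ref{Hauke2021lem} with $N=n_0+1$ applies with a constant $c$ that, by \eqref{close_to_lambda}, exceeds $E_{\cc}(n_0,\varepsilon)$ only by $O\bigl((\lambda_{\cc}^{\max})^2/(n_0+1)\bigr)+o(1)$, giving
\[ \prod_{n=n_0+1}^{\lfloor q_k/2\rfloor}(1-\tilde x_n)\ge 1-\Bigl|\sum_{n=n_0+1}^{\lfloor q_k/2\rfloor}\tilde x_n\Bigr|-\frac{c^2}{n_0}. \]
Writing $\sum\tilde x_n=\sum 2A_n+\sum B_n^2$, using $\sum_{n>n_0}n^{-2}\le n_0^{-1}$ to get $\bigl|\sum B_n^2\bigr|\le\frac{(\varepsilon+0.5\lambda_{\cc}^{\max})^2}{n_0}+o(1)$, and $\frac{c^2}{n_0}\le\frac{E_{\cc}^2(n_0,\varepsilon)+(\lambda_{\cc}^{\max})^2/4}{n_0}+o(1)$ (the slack $(\lambda_{\cc}^{\max})^2/(4n_0)$ covering the gap between $c$ and $E_{\cc}$), one recovers exactly the bracket $\frac1{n_0}\bigl(\frac{(\lambda_{\cc}^{\max})^2}{4}+(\varepsilon+0.5\lambda_{\cc}^{\max})^2+E_{\cc}^2(n_0,\varepsilon)\bigr)$ of \eqref{step_between}; it remains to bound $\bigl|\sum 2A_n\bigr|=2\lambda_k\bigl|\sum_{n=n_0+1}^{\lfloor q_k/2\rfloor}\frac{\{n\cev{\alpha}_k\}-1/2}{n}\bigr|$.

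\emph{Abel summation and Ostrowski.} Set $s_n:=1/2-\{n\cev{\alpha}_k\}$, $M:=\lfloor q_k/2\rfloor$, and $S_\ell:=S_\ell(\cev{\alpha}_k)$. Partial summation gives $\sum_{n=n_0+1}^{M}\frac{s_n}{n}=\frac{S_M}{M}-\frac{S_{n_0}}{n_0+1}+\sum_{\ell=n_0+1}^{M-1}\frac{S_\ell}{\ell(\ell+1)}$. As $\cev{\alpha}_k=q_{k-1}/q_k$ and $M<q_k$, \eqref{Ostr_est} yields $|S_M|\le\frac32(\log M)\max_j a_j(\alpha)=O(\log q_k)$, so $S_M/M=o(1)$. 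Choose $j_0=j_0(\alpha)$ with $a_j(\alpha)\le 3$ for all $j\ge j_0$ and set $T_0(k):=\langle a_k,a_{k-1},\dots,a_{j_0}\rangle$; since at scale $\ell\le T_0(k)$ the estimate \eqref{Ostr_est} for $S_\ell$ only involves the partial quotients $a_k,\dots,a_{j_0}$ of $\cev{\alpha}_k$, all $\le 3$, one gets $|S_\ell|\le\frac92\log\ell$, so for $T<\ell\le T_0(k)$
\[ \sum_{\ell=T+1}^{T_0(k)}\frac{|S_\ell|}{\ell(\ell+1)}\le\frac92\sum_{\ell>T}\frac{\log\ell}{\ell^2}\le\frac92\int_{T}^{\infty}\frac{\log t}{t^2}\,dt=\frac92\cdot\frac{1+\log T}{T}. \]
For $T_0(k)<\ell<M$, the bound $|S_\ell|=O(\log q_k)$ with $\sum_{\ell>T_0(k)}\frac1{\ell(\ell+1)}=\frac1{T_0(k)+1}$ and $T_0(k)\ge\phi^{k-j_0}$ gives a contribution $o(1)$. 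The leftover finite block $-\frac{S_{n_0}}{n_0+1}+\sum_{\ell=n_0+1}^{T}\frac{S_\ell}{\ell(\ell+1)}$ is the explicitly retained truncated sum of \eqref{step_between} (with the $O(1)$ term $S_{n_0}$ to be bounded together with it by the quantity $W_{\cc}$ of the next subsection). Multiplying by $2\lambda_k\le 2\lambda_{\cc}^{\max}+o(1)$ and collecting everything proves \eqref{step_between}.

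\emph{Main obstacle.} The delicate point is the tail bound with the sharp constant $\frac32\cdot 3=\frac92$: a naive use of \eqref{Ostr_est} for $\cev{\alpha}_k$ brings in $\max_j a_j(\alpha)$, which need not be $\le 3$ because $\alpha\in\tilde{E}_3$ only controls the eventual partial quotients. Splitting at $T_0(k)$ fixes this --- for $\ell\le T_0(k)$ only late (hence $\le 3$) partial quotients of $\alpha$ enter $S_\ell(\cev{\alpha}_k)$, while the range $\ell\in(T_0(k),M]$, where a large early partial quotient could enter, is negligible since there $\ell^{-2}$ is of order $q_k^{-2}$ and $T_0(k)$ grows geometrically in $k$. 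The remaining work is the careful tracking of the locally uniform $o(1)$-terms coming from \eqref{close_to_lambda} and Definition/Proposition \ref{prop_lambda_def} through each estimate.
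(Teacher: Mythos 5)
Your proposal follows essentially the same route as the paper's proof: bound $1-h_{n,k}$ by a quantity of size $O(1/n)$, apply Proposition \ref{Hauke2021lem} with $N=n_0+1$, and then treat the linear sum by partial summation, Ostrowski's estimate \eqref{Ostr_est}, and truncation at $T$. The bookkeeping difference (discarding $A_n^2$ before the Taylor step and letting the $(\lambda_{\cc}^{\max})^2/4$ slot in the bracket absorb the slightly larger constant $c$) is immaterial and checks out numerically for $n_0\ge 10$. Your treatment of the range $\ell>T$ is in fact more careful than the paper's: the paper establishes $|S_\ell(\cev{\alpha}_k)|\le\tfrac92\log\ell$ only for $\ell\le T$ (partial quotients $b_i^{(k)}\le 3$ up to the first index $j$ with $q_j^{(k)}>T$) and then uses this bound for all $T<\ell\le\lfloor q_k/2\rfloor$; your split at the $k$-dependent threshold $T_0(k)$, with the crude $O_{\alpha}(\log q_k)$ bound and the geometric growth of $T_0(k)$ disposing of the top range, is exactly what is needed to make that step airtight.

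The one place where your argument does not deliver \eqref{step_between} as stated is the boundary term of the partial summation: your (correct) identity leaves $-S_{n_0}(\cev{\alpha}_k)/(n_0+1)$, which is $O(1)$ and not $o(1)$ (it is bounded only by roughly $n_0/(2(n_0+1))$), whereas the right-hand side of \eqref{step_between} contains no such term. Deferring it to $W_{\cc}$ is not available inside this lemma: by \eqref{west} and \eqref{partsum_final_alg}, $W_{\cc}$ is designed to bound only $\bigl|\sum_{\ell=n_0+1}^{T}S_\ell(\cev{\alpha}_k)/(\ell(\ell+1))\bigr|$, so to prove the lemma as written you would have to either absorb $|S_{n_0}(\cev{\alpha}_k)|/(n_0+1)$ into the retained quantities or carry it explicitly through into $f_{\cc,\infty}$. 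Note that the paper's own summation-by-parts display likewise omits this boundary term without comment, so you have not missed an idea that the paper supplies; rather, your more explicit bookkeeping exposes a term that the paper's proof glosses over. As a verification of the inequality exactly as stated, however, your write-up is incomplete at precisely this point and should either justify why the term can be dropped or adjust the estimate accordingly.
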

\begin{proof}
One can see that for $n\ge n_0+1$, we have
\begin{equation*}
\lvert 1 - h_{n,k}(\varepsilon)\rvert \le 
 \frac{1}{n_0+1}\left(\lambda_{\cc}^{\max}+\frac{\varepsilon\lambda_{\cc}^{\max}+\varepsilon^{2}}{n_0+1}\right) + o(1) = \frac{E_{\cc}(n_0,\varepsilon)}{n_0+1} + o(1),\quad k \to \infty.
\end{equation*}
Hence, an application of Proposition \ref{Hauke2021lem} gives
\begin{align}
\notag
\prod_{n=n_0+1}^{\lfloor q_k/2\rfloor } h_{n,k}(\varepsilon)&\ge 1-\Biggl(\Bigg\lvert\sum\limits_{n=n_0+1}^{\lfloor q_k/2\rfloor}\Biggl(\bigg(1 - \lambda_k\frac{\{n\cev{\alpha}_k\} - \frac{1}{2}}{n}\bigg)^2 - \frac{\left(\varepsilon + \frac{\lambda_k}{2}\right)^2}{n^2}-1\Biggr)\Bigg\rvert+\frac{E^2_{\cc}(n_0,\varepsilon)}{n_0}\Biggr)-o(1)
\\& \ge 1-\Biggl(2\lambda_{k}\Bigg\lvert\sum\limits_{n=n_0+1}^{\lfloor q_k/2\rfloor}\frac{\{n\cev{\alpha}_k\} - \frac{1}{2}}{n}\Bigg\rvert+\frac{\lambda_{k}^2}{4}\sum\limits_{n=n_0+1}^{\lfloor q_k/2\rfloor}\frac{1}{n^2}+(\varepsilon+\lambda_k/2)^2\sum\limits_{n=n_0+1}^{\lfloor q_k/2\rfloor}\frac{1}{n^2}+\frac{E^2_{\cc}(n_0,\varepsilon)}{n_0}\Biggr)-o(1).
\label{sumsplit}
\end{align}
If any term of \eqref{sumsplit} is unbounded, then \eqref{step_between} holds by the trivial estimate $\prod_{n=n_0+1}^{\lfloor q_k/2\rfloor } h_{n,k}(\alpha,\varepsilon) \ge 0$. If all terms in \eqref{sumsplit} are bounded we use $\lambda_k \le \lambda_{\cc}^{\max}+ o(1)$ to obtain 

\[
\prod_{n=n_0+1}^{\lfloor q_k/2\rfloor } h_{n,k}(\alpha,\varepsilon)\ge 
  1-\Biggl(2\lambda_{\cc}^{\max}\Bigg\lvert\sum\limits_{n=n_0+1}^{\lfloor q_k/2\rfloor}\frac{\frac{1}{2}-\{n\cev{\alpha}_k\}}{n}\Bigg\rvert+\frac{1}{n_0}\biggl(\frac{(\lambda_{\cc}^{\max})^2}{4}+(\varepsilon+0.5\lambda_{\cc}^{\max})^2+E^2_{\cc}(n_0,\varepsilon)\biggr)\Biggr) - o(1).
\]
Therefore, we are left to show that
\begin{equation*}
\left|\sum_{n=n_0+1}^{\lfloor q_k/2\rfloor}\frac{\frac{1}{2} -\left\{n\cev{\alpha}_k\right\}}{n}\right|
\le \left|\sum_{\ell=n_0+1}^{T}\frac{S_{\ell}(\cev{\alpha}_k)}{\ell(\ell +1)}\right| 
 + \frac{9}{2}\frac{1+\log{T}}{T}+o(1), \quad k \to \infty.
 \end{equation*}
Applying summation by parts,
we obtain
\begin{equation*}
\begin{split}
\left|\sum_{n=n_0+1}^{\lfloor q_k/2\rfloor}\frac{\frac{1}{2} -\left\{n\cev{\alpha}_k\right\}}{n}\right|
&\le \left|\sum_{\ell=n_0+1}^{T}\frac{S_{\ell}(\cev{\alpha}_k)}{\ell(\ell +1)}\right| +\frac{|S_{\lfloor q_k/2\rfloor}|}{\lfloor q_k/2\rfloor}
 + \sum_{\ell=T+1}^{\lfloor q_k/2\rfloor}\frac{\lvert S_{\ell}(\cev{\alpha}_k)\rvert}{\ell^2}.
 \end{split}
 \end{equation*}
 We write $\cev{\alpha}_k = [0;b_1^{(k)},b_2^{(k)},\ldots]$ with convergents
 $\frac{p_i^{(k)}}{q_i^{(k)}}$. Since $\alpha \in E_3$, we have
$b_i^{(k)} \le 3$.
Thus Ostrowski's estimate \eqref{Ostr_est} gives for any $n_0+1 \le \ell \le T$ that

\[\lvert S_{\ell}(\cev{\alpha}_k)\rvert \le \frac{9}{2}\log \ell,\]
hence we obtain
 \begin{equation}
 \label{sum_from_Tplus1}
 \sum_{\ell=T+1}^{\lfloor q_k/2\rfloor}\frac{\lvert S_{\ell}(\cev{\alpha}_k)\rvert}{\ell^2}\le
 \frac{9}{2} \sum_{\ell=T+1}^{\infty}\frac{ \log \ell}{\ell^2}\le \frac{9}{2}\frac{1+\log{T}}{T}.
\end{equation}    
 Since $\alpha$ is badly approximable, another application of \eqref{Ostr_est} shows that
\begin{equation*}
\frac{|S_{\lfloor q_k/2\rfloor}|}{\lfloor q_k/2\rfloor} = O_{\alpha}\left(\frac{\log(q_k)}{q_k}\right) = o(1), \quad k \to \infty.
\end{equation*}

\end{proof}
 \begin{figure}[h]
  \centering
  \includegraphics[width=1.\linewidth]{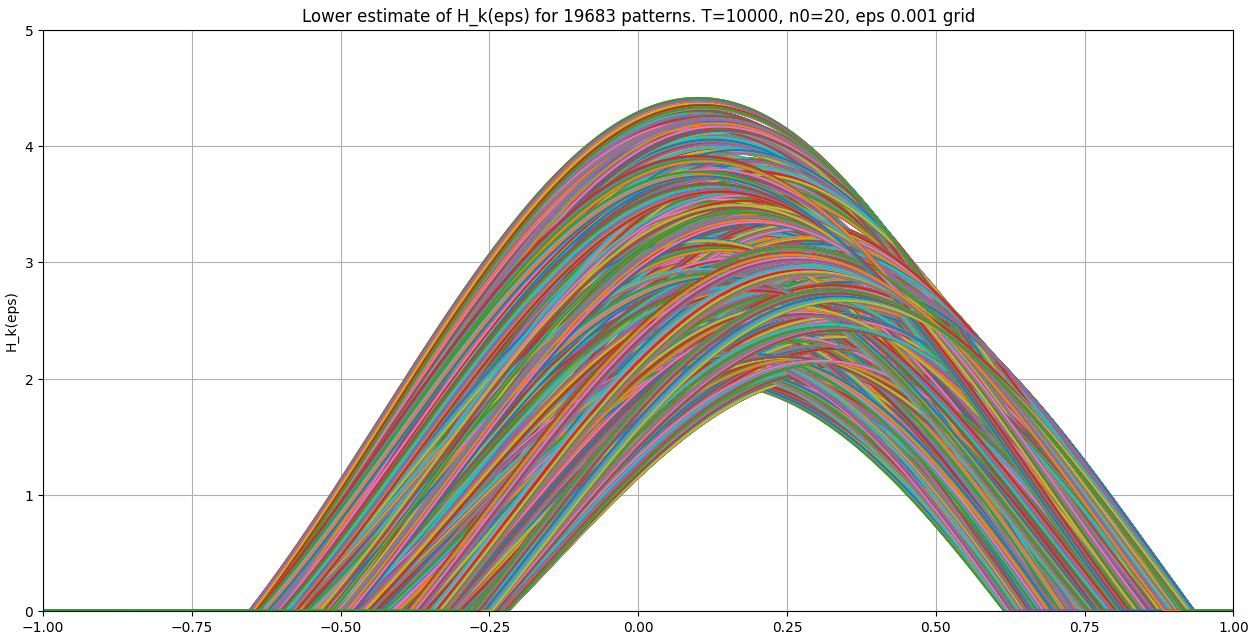}
  \caption{Functions $F_{\cc}(\varepsilon)$ for all $3^9=19683$ patterns $\cc$.}
  \label{fig1}
\end{figure}

Note that the right-hand side of \eqref{step_between} still depends on $\cev{\alpha}_k$ and therefore cannot serve as a function $f_{\cc, \infty}(\varepsilon)$. So in order to finish the construction of $f_{\cc, \infty}(\varepsilon)$ we need to estimate the sum
\begin{equation*}
\left|\sum_{\ell=n_0+1}^{T}\frac{S_{\ell}(\cev{\alpha}_k)}{\ell(\ell +1)}\right|.
\end{equation*}
We use an approach similar to the one established in \cite[Lemma 8]{hauke_badly}: for real numbers $x<y$ define
$$
w_{\min}(\ell,x,y):= \sum_{n =1}^{\ell} \left(1-\{ny\}\right)\cdot\mathds{1}_{[\lfloor nx \rfloor = \lfloor ny \rfloor]}-\frac{1}{2}, \quad w_{\max}(\ell,x,y):= \sum_{n =1}^{\ell} \frac{1}{2} - \left(\{nx\}\cdot\mathds{1}_{[\lfloor nx \rfloor = \lfloor ny \rfloor]}\right).
$$
One can easily see that if $x\le \cev{\alpha}_k\le y$, then
\begin{equation}\label{sandwich}
w_{\min}(\ell,x,y)\le S_{\ell}(\cev{\alpha}_k) \le w_{\max}(\ell,x,y).
\end{equation}
Hence,
\begin{equation}
\label{west}
\left|\sum_{\ell=n_0+1}^{T}\frac{S_{\ell}(\cev{\alpha}_k)}{\ell(\ell +1)}\right|\le\max\left(\left|\sum_{\ell=n_0+1}^{T}\frac{w_{\min}(\ell,x,y)}{\ell(\ell +1)}\right|, \left|\sum_{\ell=n_0+1}^{T}\frac{w_{\max}(\ell,x,y)}{\ell(\ell +1)}\right| \right) =: w_{est}(n_0, T,x,y).
\end{equation}
Choosing $ x= \cev{\cc}_{\min}, y = \cev{\cc}_{\max}$ turns out to give too weak bounds, thus we consider a covering of $(\cev{\cc}_{\min}, \cev{\cc}_{\max})$ with smaller intervals. We do this by an algorithm which receives an integer parameter $m$ that defines the depth (and therefore determines the number of subintervals considered) at which point the algorithm stops searching for improvements. Although a brute-force algorithm could take a uniform grid with a tiny step size, we will present a more efficient approach to only consider small intervals where necessary.

In order to define this algorithm, we introduce some extra notation. Let $B=(b_1,\ldots,b_t)$ be a word in the alphabet $\{1,2,3\}$ and define
\begin{equation*}
cf_{\min}(B)=
\begin{cases}
[0;b_1,\ldots,b_t,\overline{3,1}] \quad\text{if $t$ is even,}\\
[0;b_1,\ldots,b_t,\overline{1,3}] \quad\text{if $t$ is odd},
\end{cases}
\end{equation*}
and
\begin{equation*}
cf_{\max}(B)=
\begin{cases}
[0;b_1,\ldots,b_t,\overline{1,3}] \quad\text{if $t$ is even},\\
[0;b_1,\ldots,b_t,\overline{3,1}] \quad\text{if $t$ is odd}.
\end{cases}
\end{equation*}
Denoting ${\cc}_{left}=(c_4,c_3,c_2,c_1)$, we note that $\cev{\cc}_{\min}=cf_{\min}({\cc}_{left})$, $\cev{\cc}_{\max}=cf_{\max}({\cc}_{left})$. The algorithm gets the sequence $C={\cc}_{left}$ and a threshold parameter $err_{\max}$ as inputs. The initialization value of $err_{\max}$ is 
$$
\max(w_{est}\bigl(n_0,T,\cev{\cc}_{\min},\cev{\cc}_{\min}), w_{est}(n_0,T,\cev{\cc}_{\max},\cev{\cc}_{\max})\bigr)+10^{-6}.
$$
Now the algorithm does the following:
\begin{enumerate}
\item{Set $x=cf_{\min}(C),\ \ y=cf_{\max}(C)$. Calculate $w_{est}(n_0,T,x,y)$.
}
\item{If $w_{est}(n_0,T,x,y)<err_{\max}$, return $w_{est}(n_0,T,x,y)$.}
\item{If $w_{est}(n_0,T,x,y)\ge err_{\max}$ and the length of $C$ is less than $m$, the algorithm gets launched recursively for three longer sequences $(C,1)$,  $(C,2)$, $(C,3)$ and the same value of $err_{\max}$ and returns the maximum of the three outputs.}
\item{If the length of $C$ exceeds $m$, the algorithm terminates and gets launched for the initial input sequence $C={\cc}_{left}$ and increased threshold $err_{\max}=1.05\cdot err_{\max}$.}
\end{enumerate}
 \begin{figure}[h]
  \centering
  \includegraphics[width=1.\linewidth]{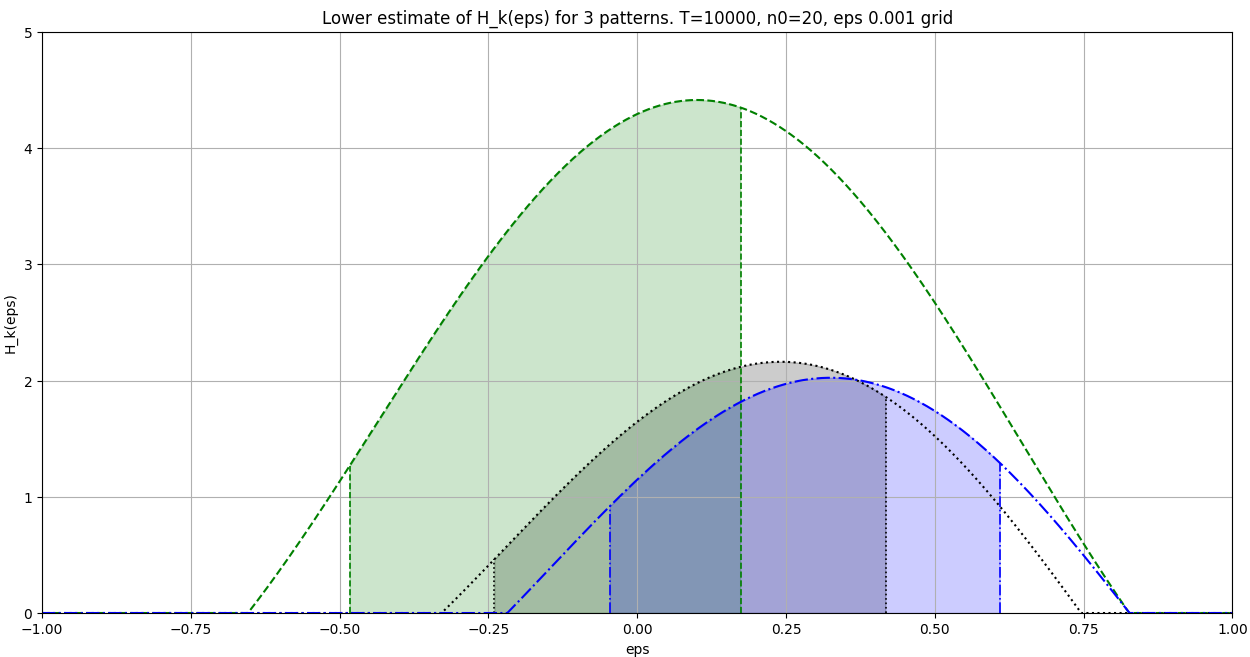}
  \caption{Functions $F_{\cc}(\varepsilon)$ for 3 patterns: $\overline{123123123}$ (black dotted line), $\overline{131313131}$ (green dash-dot line),  and $\overline{313131313}$ (blue dashed line). The range between $\varepsilon_{\cc}^{\min}$ and $\varepsilon_{\cc}^{\max}$ is shaded for each pattern.}
  \label{fig2}
\end{figure}
Writing the output of the above algorithm as $W_{\cc}(n_0,T,m)$, 
it can be verified that
\[ W_{\cc}(n_0,T,m)=\max_{i \in I_m} w_{est}\left(n_0,T,x_i^{(m)},x_{i+1}^{(m)}\right)\]
where the sets $\left\{\left[x_i^{(m)},x_{i+1}^{(m)}\right], i \in I_m\right\}$ cover $(\cev{\cc}_{\min},\cev{\cc}_{\max})$. Thus by \eqref{sandwich} and \eqref{west},
we can deduce that
\begin{equation}
\label{partsum_final_alg}
\left|\sum_{n=n_0+1}^{T}\frac{\frac{1}{2} -\left\{n\cev{\alpha}_k\right\}}{n}\right|
\le W_{\cc}(n_0, T,m)
 +o(1), \quad k \to \infty.
 \end{equation}

We have now all functions defined to finish the proof of Lemma \ref{new_idea_lem} which we repeat here for the reader's convenience.\\

\noindent \textbf{Lemma \ref{new_idea_lem}.} 
    Let $n_0 \ge 10, T \ge n_0+2$ and $m \in \N$ be fixed parameters. For $\cc = (c_1,\ldots,c_9) \in \{1,2,3\}^9$ let
\begin{equation*}
 F_{\cc}(\varepsilon)= F_{\cc}(\varepsilon,n_0,T,m) :=
2\pi(\varepsilon+\lambda_{\cc}^{\min})f_{\cc, \infty}(\varepsilon) \prod_{n=1}^{n_0 } f_{\cc,n}(\varepsilon), \quad \varepsilon\in(\varepsilon_{\cc}^{\min},\varepsilon_{\cc}^{\max}),
\end{equation*}
where
\begin{equation*} 
f_{\cc,\infty}(\varepsilon)=1-\Biggl(2\lambda_{\cc}^{\max}\biggl(W_{\cc}(n_0, T,m)
 + \frac{9}{2}\frac{1+\log{T}}{T}\biggr)+\frac{1}{n_0}\biggl(\frac{(\lambda_{\cc}^{\max})^2}{4}+(\varepsilon+0.5\lambda_{\cc}^{\max})^2+E^2_{\cc}(n_0,\varepsilon)\biggr)\Biggr).
\end{equation*}
Then we have the following:
\begin{itemize}
    \item $F_{\cc}(\varepsilon)$ is pseudo-concave and Lipschitz-continuous on $(\varepsilon_{\cc}^{\min},\varepsilon_{\cc}^{\max})$.
    \item For any $\alpha \in E_3$, let $k_{\ell}(\cc,\alpha):= \{k \in \N: (a_{k-3}(\alpha),a_{k-2}(\alpha),\ldots, a_{k+4}(\alpha),a_{k+5}(\alpha))=\cc\}$.
     Then we have 
\[
\liminf_{\ell \to \infty}H_{k_{\ell}}(\alpha,\varepsilon) \ge F_{\cc}(\varepsilon)
\]
uniformly in $(\varepsilon_{\cc}^{\min},\varepsilon_{\cc}^{\max})$
and $E_3$.
\end{itemize}

\begin{proof}
We will prove that every factor of $F_{\cc}$ is log-concave, which implies log-concavity of $F_{\cc}$, which further implies pseudo-concavity.
Clearly, $2\pi(\varepsilon+\lambda_{\cc}^{\min})$ is a concave function in $\varepsilon$. By Lemma \ref{fcn_lem}, $f_{\cc,n}(\varepsilon)$ is concave and therefore also log-concave. For $f_{\cc,\infty}(\varepsilon)$, we compute the second derivative: We obtain

\[
f_{\cc,\infty}''(\varepsilon) = 
- \frac{2}{n_0} - \frac{1}{n_0}\left(
2E_{\cc}'(n_0,\varepsilon)^2 + 2E_{\cc}(n_0,\varepsilon)E_{\cc}''(n_0,\varepsilon)
\right).
\]

For $\varepsilon \in (\varepsilon_{\cc}^{\min},\varepsilon_{\cc}^{\max})$, we have $\lvert E_{\cc}(n_0,\varepsilon)\rvert \le \lambda_{\max} + \frac{2}{n_0+1}, E''(n_0,\varepsilon) = \frac{2}{n_0+1}$, so for $n_0 \ge 10$ we obtain
$
f_{\cc,\infty}''(\varepsilon) <0$ for any $\varepsilon \in (\varepsilon_{\cc}^{\min},\varepsilon_{\cc}^{\max})$. Thus $f_{\cc,\infty}(\varepsilon)$ is concave and therefore also log-concave. This shows that $F_{\cc}$ is log-concave on $\varepsilon\in(\varepsilon_{\cc}^{\min},\varepsilon_{\cc}^{\max})$. Since every factor of $F_{\cc}$ is Lipschitz-continuous, so is 
$F_{\cc}$ itself.\\
The second statement of Lemma \ref{new_idea_lem} follows from a combination of \eqref{Hk_step1}, \eqref{partsum_final_alg}, and Lemmas \ref{fcn_lem} and \ref{taylor_lem}.
\end{proof}

\section{Case study.}\label{case_study}
In this section, we will show that for all $\cc \in \{1,2,3\}^9$ the functions $F_{\cc} \in\mathcal{F}$ constructed in the previous section satisfy the inequalities (\ref{mainineq_negpert_})-(\ref{mainineq_no_large_negative_}). Due to the huge number of cases and the substantially large parameters $n_0 = 20,T=10.000$, all statements in this section were obtained with computer assistance. We recall that all functions $F_{\cc}$ are pseudo-concave, i.e.
$$
x<\varepsilon<y\implies F_{\cc}(\varepsilon) \ge \min\{F_{\cc}(x),F_{\cc}(y)\}.
$$
We will always assume $k$ to be large enough such that the terms $o(1)$ arising from (\ref{prop8est}) and related quantities defined in Definition/Proposition \ref{prop_lambda_def} become negligible factors in the argument.
Since $F_{\cc}$ is Lipschitz-continuous on the considered range, this factor will also be negligible after an application of $F_{\cc}$. Thus by an abuse of notation, instead of writing e.g.
$$
\lambda_{\cc}^{\min}-o(1) \le \lambda_k\le \lambda_{\cc}^{\max}+o(1)\implies 
F_{\cc}(\lambda_k) \ge \min\{F_{\cc}(\lambda_{\cc}^{\min}),F_{\cc}(\lambda_{\cc}^{\max})\} - o(1),
$$
we will say 
$$
\lambda_{\cc}^{\min} \le \lambda_k \le \lambda_{\cc}^{\max}\implies
F_{\cc}(\lambda_k) \ge \min\{F_{\cc}(\lambda_{\cc}^{\min}),F_{\cc}(\lambda_{\cc}^{\max})\}.
$$
Fix the grid
\begin{equation}
\label{griddef}   
x_i=-1+i\cdot0.001,\quad i=0,1,\ldots, 2000.
\end{equation}
The following statement will be our main tool in the estimation of the functions $F_{\cc_k}(\varepsilon)$ when we know some upper and lower bounds of $\varepsilon$ and the values $F_{\cc}(x_i)$. 
\begin{prop}
\label{monot_prop}
\ \\
\begin{enumerate}
\item{For each $\cc \in \{1,2,3\}^9$ the sequence $(F_{\cc}(x_i))_{i = 0}^{2000}$ has a unique maximum $x_{i_{\max}}>0$. Moreover, for $i < i_{\max}$, the sequence is non-decreasing, and for $i > i_{\max}$ non-increasing.} 
\item{Suppose that $x_i\le\varepsilon<0$. Then $F_{\cc_k}(\varepsilon)\ge F_{\cc_k}(x_i)$.}
\end{enumerate}
\end{prop}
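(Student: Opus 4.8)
The plan is to derive both assertions from the log-concavity (hence pseudo-concavity) of $F_{\cc}$ proved in Lemma \ref{new_idea_lem}, together with one finite computer check. Throughout, I would let the index $i$ range over those values with $x_i\in(\varepsilon_{\cc}^{\min},\varepsilon_{\cc}^{\max})$ (equivalently, one may set $F_{\cc}(x_i):=0$ for the remaining, finitely many, grid points). This is harmless: $F_{\cc}$ is positive on its domain, and in fact, by pseudo-concavity together with \eqref{mainineq_negpert_} and \eqref{mainineq_no_large_positive_}, it satisfies $F_{\cc}\ge 0.35$ on $(\varepsilon_{\cc}^{\min},\varepsilon_{\cc}^{\max})$. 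I would also record that $-1<\varepsilon_{\cc}^{\min}$ and $\varepsilon_{\cc}^{\max}<1$ by Proposition \ref{max_perturb}, so a nonempty block of grid points genuinely lies in the domain.

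For part (1), I would first extract the soft content. Set $g:=\log F_{\cc}$, which is concave on $(\varepsilon_{\cc}^{\min},\varepsilon_{\cc}^{\max})$ by Lemma \ref{new_idea_lem}, and let $x_m$ be a grid point where $(F_{\cc}(x_i))_i$ is maximal. For indices $i<j\le m$ the point $x_j$ is a convex combination of $x_i$ and $x_m$, so concavity of $g$ together with $g(x_m)\ge g(x_i)$ gives $g(x_j)\ge g(x_i)$, i.e.\ $F_{\cc}(x_i)\le F_{\cc}(x_j)$; the symmetric argument handles $j\ge m$. This already establishes the non-decreasing/non-increasing profile around the maximum. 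What then remains is exactly what concavity cannot detect: that the maximal value is attained at a \emph{unique} grid point, and that this grid point is positive. These two facts I would verify by directly evaluating the $3^{9}\cdot 2001$ numbers $F_{\cc}(x_i)$ with the parameters $n_0=20$, $T=10000$, $m=40$ fixed in Lemma \ref{main_techn_lem_variant}, in the same spirit as the rest of Section \ref{case_study}.

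For part (2), I would invoke pseudo-concavity directly, with no further estimates. Let $x_i\le\varepsilon<0$ with $\varepsilon\in(\varepsilon_{\cc}^{\min},\varepsilon_{\cc}^{\max})$ (here $\cc=\cc_k$). By part (1) we have $x_{i_{\max}}>0$, so
\[
\varepsilon_{\cc}^{\min}<x_i\le\varepsilon<0<x_{i_{\max}}<\varepsilon_{\cc}^{\max},
\]
and hence $\varepsilon\in[x_i,x_{i_{\max}}]\subseteq(\varepsilon_{\cc}^{\min},\varepsilon_{\cc}^{\max})$. Pseudo-concavity of $F_{\cc}$ then yields $F_{\cc}(\varepsilon)\ge\min\{F_{\cc}(x_i),F_{\cc}(x_{i_{\max}})\}$, and since $x_{i_{\max}}$ realizes the grid maximum (part (1)), this minimum equals $F_{\cc}(x_i)$, which is the claim. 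Should $x_i$ fall outside $(\varepsilon_{\cc}^{\min},\varepsilon_{\cc}^{\max})$, the inequality is trivial because $F_{\cc}(\varepsilon)\ge 0.35>0=F_{\cc}(x_i)$.

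The only genuine obstacle I anticipate is the computational step underlying part (1): there is no shortcut around evaluating $F_{\cc}$ at all $2001$ grid points for each of the $3^{9}$ patterns $\cc$, and one must make sure the implementation faithfully reproduces the covering-algorithm output $W_{\cc}(n_0,T,m)$ together with the finite products $\prod_{n=1}^{n_0}f_{\cc,n}$ entering the definition of $F_{\cc}$. Every other ingredient of the proof is a one- or two-line consequence of (pseudo-)concavity.
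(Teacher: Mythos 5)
Your proposal is correct and follows essentially the same route as the paper: pseudo-concavity (via log-concavity from Lemma \ref{new_idea_lem}) combined with a finite computer verification over all $\cc\in\{1,2,3\}^9$, with part (2) deduced from part (1) and pseudo-concavity exactly as in the paper. The only difference is cosmetic: you isolate precisely what must be checked numerically (uniqueness and positivity of the maximizing grid point, plus the handling of grid points outside $(\varepsilon_{\cc}^{\min},\varepsilon_{\cc}^{\max})$), whereas the paper verifies statement (1) wholesale by computer.
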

\begin{proof}
The first statement follows from the pseudo-concavity of the functions $F_{\cc}(\varepsilon)$ and is verified with computer assistance by checking the correctness for all $\cc \in \{1,2,3\}^9$. The second statement follows from the first one and, again, the pseudo-concavity property. 
\end{proof}

Using Proposition \ref{monot_prop}, we verify the inequality (\ref{mainineq_negpert_}): for each $\cc \in \{1,2,3\}^9$ we take the maximal $x_i(\cc)$ such that $x_i(\cc)<\varepsilon_{\cc(\alpha,k)}^{\min}$ and show that $\min_{\cc}F_{\cc}(x_i(\cc))>0.35$.

As $0$ belongs to the grid (\ref{griddef}), the inequality (\ref{mainineq_zeropert_}) is verified straightforwardly. Actually, we obtain the sharper estimate
\begin{equation}
\label{mainineq_zeropert_sharp}
 F_{\cc}(0)>1.14671,\quad\forall\cc \in \{1,2,3\}^9.
\end{equation}

\subsection{Positive perturbations}
In this subsection, we obtain lower estimates on $F_{\cc_k}(\varepsilon_{k,t}(N))$ when $\varepsilon_{k,t}(N)>0$. If $t = 0$ and assuming $\varepsilon_{k,t}(N)>0$, we have $0\le \varepsilon_{k,t}(N)\le\lambda_{\cc_k,1}^{\max}$ and therefore
\begin{equation}
\label{mainineq_posoper_t0_sharp}
F_{\cc_k}(\varepsilon_{k,0})\ge\min\bigl(F_{\cc_k}(0), F_{\cc_k}(\lambda_{\cc_k,1}^{\max})\bigr)\ge 1.14671.
\end{equation}
It follows immediately from the definition of $\varepsilon_{k,t}$ and \eqref{deltasum} that
\begin{equation}\label{greater0}
\varepsilon_{k,0}>0 \iff \min\limits_{r>0} r: b_{k+r}>0\ \ \text{is even},
\end{equation}
a fact that we will use several times in the following case study.

If $t=1$, we have $c_5\ge 2$. Using the identity (\ref{deltasum}), we have
\begin{equation}
\label{mainineq_posoper_t1_pert}
\varepsilon_{k,1}(N)\ge q_k\Bigl(\delta_k-(a_{k+2}-1)\delta_{k+1}-a_{k+4}\delta_{k+3}-a_{k+6}\delta_{k+5}-\ldots\Bigr)\ge
q_k\delta_{k+1}\ge \lambda_{\cc_k,1}^{\min}.
\end{equation}
Thus, taking the upper bound from (\ref{eps_min_max_new}), we see that 
\begin{equation}
\label{mainineq_posoper_t1_sharp}
F_{\cc}(\varepsilon_{k,1}(N))\ge\min\bigl(F_{\cc}(\lambda_{\cc_k,1}^{\min}), F_{\cc}(\lambda_{\cc_k,1}^{\max}+\lambda_{\cc_k}^{\max})\bigr)\ge 1.53232.
\end{equation}

If $t=2$, we have $c_5=3$. Similarly to the previous case, we obtain the following bounds:
$$
\lambda_{\cc_k,1}^{\min}+\lambda_{\cc_k}^{\min}\le\varepsilon_{k,2}(N)\le
\lambda_{\cc_k,1}^{\max}+2\lambda_{\cc_k}^{\max}
$$
and deduce that
\begin{equation}
\label{mainineq_posoper_t2_sharp}
F_{\cc_k}(\varepsilon_{k,2}(N))\ge\min\bigl(F_{\cc}(\lambda_{\cc_k,1}^{\min})+\lambda_{\cc_k}^{\min}), F_{\cc}(\lambda_{\cc_k,1}^{\max}+2\lambda_{\cc_k}^{\max})\bigr)\ge 1.2866.
\end{equation}
Thus, the inequality (\ref{mainineq_no_large_positive_}) is verified.

\subsection{Negative perturbations}
 The remaining part of the section will be devoted to the proof of inequality (\ref{mainineq_no_large_negative_}). As we have seen in the previous paragraph, only negative perturbations might cause $F_{\cc_k}(\varepsilon)<1$. However, they will always be overcompensated by other factors arising that are greater than $1$, a phenomenon beautifully illustrated by Aistleitner, Technau, and Zafeiropoulos in the proof of \cite[Lemma 3]{tech_zaf}. In their article, the irrational is always of the form $\alpha=[0;\overline{5}]$. This continued fraction expansion of $\alpha$ means, in our terms, that all the patterns $\cc_k$ coincide, which allows the authors of \cite{tech_zaf} to treat all cases in detail. Our situation is more complicated, which is why we only present the first case 1.1.1.1 in detail. For all other cases, we only provide lower and upper bounds for the corresponding perturbations in a compact form and mention additional arguments if needed.\\

We recall that $\alpha\in\mathfrak{A}(\cc_k,k)$ i.e. $\cc_k=(c_1,c_2,\ldots,c_9)=(a_{k-3}(\alpha),a_{k-2}(\alpha),\ldots,a_{k+5}(\alpha))$. We can assume that $b_{k}>0$, since otherwise, the corresponding factor $H_k$ does not appear in the decomposition. The case distinction will be made by fixing the value of the Ostrowski coefficient for $b_{k+1}$ and if necessary, also the coefficients $b_{k+2},b_{k+3},b_{k+4},b_{k+5},b_{k+6}$.
Note that by the definition of the Ostrowski representation, $b_{k}>0$ implies $b_{k+1}\le a_{k+1}-1$.
\subsubsection*{\LARGE{Case 1. $b_{k+1}>0$.}} 
In this case, we have $a_{k+2}\ge 2$. The case splits into several subcases.
\paragraph*{\Large{Case 1.1. $b_{k+1}=1$.}}
\paragraph*{\large{Case 1.1.1. $b_{k+2}=0$.}}
\paragraph*{\normalsize{Case 1.1.1.1. $b_{k+3}\ge 1$.}}
In this case, we estimate
\begin{equation}
\begin{split}
\label{case1111eq}
0>\varepsilon_{k,0}=q_k\sum\limits_{j=1}^{n-k}(-1)^j b_{k+j}\delta_{k+j}\ge&
q_k\Bigl(-\delta_{k+1}-a_{k+4}\delta_{k+3}-a_{k+6}\delta_{k+5}-\ldots\Bigr)\ge q_k\Bigl(-\delta_{k+1}-\delta_{k+2}\Bigr)\ge\\
\ge&-\lambda_{k,1}-\lambda_{k,2} \ge -\lambda_{\cc_k,1}^{\max}-\lambda_{\cc_k,2}^{\max}.
\end{split}
\end{equation}
Suppose that $x_{i}<-\lambda_{\cc_k,1}^{\max}-\lambda_{\cc_k,2}^{\max}<x_{i+1}$. By Proposition \ref{monot_prop}, 
\begin{equation}
\label{case1111_hk1}
F_{\cc_k}(\varepsilon_{k,0})\ge F_{\cc_k}(x_i).
\end{equation}

The quantity $\varepsilon_{k+1,0}$ is estimated from below as follows:
\begin{equation}
\label{case1111eq2}
\varepsilon_{k+1,0}=q_{k+1}\sum\limits_{j=1}^{n-k-1}(-1)^j b_{k+1+j}\delta_{k+1+j}\ge
q_{k+1}\Bigl(\delta_{k+3}-(a_{k+5}-1)\delta_{k+4}-a_{k+7}\delta_{k+6}-\ldots\Bigr)\ge
q_{k+1}\delta_{k+4}\ge\lambda_{k+1,3}\ge\lambda_{\cc_{k+1},3}^{\min}.
\end{equation}
The upper bound is provided by
\begin{equation}
\label{case1111eq21}
\varepsilon_{k+1,0}\le
q_{k+1}\Bigl(a_{k+4}\delta_{k+3}+a_{k+6}\delta_{k+5}+a_{k+8}\delta_{k+7}+\ldots\Bigr)\le q_{k+1}\delta_{k+2}\le\lambda_{k+1,1}\le\lambda_{\cc_{k+1},1}^{\max}.
\end{equation}

We find maximal $j$ respectively minimal $p$ such that $x_{j}<\lambda_{\cc_{k+1},3}^{\min}<\lambda_{\cc_{k+1},1}^{\max}<x_{p}$. 
As the pattern $\cc_k$ is fixed, we know the first eight elements of the pattern $\cc_{k+1}$. Denote 
$$
\cc^{(i)}=(c_2,c_3,\ldots,c_9,i),\quad i\in\{1,2,3\}.
$$
Thus, using the pseudo-concavity property, we have
\begin{equation}
\label{case1111_hk2}
F_{\cc_{k+1}}(\varepsilon_{k+1,0})\ge \min\bigl(F_{\cc^{(1)}}(x_j), F_{\cc^{(2)}}(x_j), F_{\cc^{(3)}}(x_j), F_{\cc^{(1)}}(x_p), F_{\cc^{(2)}}(x_p), F_{\cc^{(3)}}(x_p)\bigr).
\end{equation}
Combining the estimates (\ref{case1111_hk1}) and (\ref{case1111_hk2}) we obtain that
$$
F_{\cc_k}(\varepsilon_{k,0})F_{\cc_{k+1}}(\varepsilon_{k+1,0})\ge 1.061.
$$
The inequality (\ref{mainineq_no_large_negative_}) is verified in this case. As all following cases are treated in a similar way, we will write the corresponding proofs in less detail. 
\paragraph*{\normalsize{Case 1.1.1.2. $b_{k+3}=0, b_{k+4}=0$}.}
In this case, we obtain
\begin{gather*}
    0>\varepsilon_{k,0}\ge
q_k\Bigl(-\delta_{k+1}-a_{k+6}\delta_{k+5}-a_{k+8}\delta_{k+7}-\ldots\Bigr)\ge-\lambda_{k,1}-\lambda_{k,4}\ge -\lambda_{\cc_{k},1}^{\max}-\lambda_{\cc_{k},4}^{\max},
\\
\lambda_{\cc_{k+1},3}^{\max}\ge 
q_{k+1}\Bigl(a_{k+5}\delta_{k+5}+a_{k+8}\delta_{k+7}+\ldots\Bigr)\ge\varepsilon_{k+1,0}\ge
q_{k+1}\Bigl(-a_{k+7}\delta_{k+6}-a_{k+9}\delta_{k+8}-\ldots\Bigr)\ge-\lambda_{\cc_{k+1},4}^{\max},\\
\implies F_{\cc_k}(\varepsilon_{k,0})F_{\cc_{k+1}}(\varepsilon_{k+1,0})\ge 1.2007.
\end{gather*}

\paragraph*{\normalsize{Case 1.1.1.3. $b_{k+3}=0, b_{k+4}=1, b_{k+5}=0, b_{k+6}\ge 1$}.}
In this case, we obtain
\begin{gather*}
0>\varepsilon_{k,0}\ge
q_k\Bigl(-\delta_{k+1}+\delta_{k+4}+\delta_{k+6}-(a_{k+8}-1)\delta_{k+7}-a_{k+10}\delta_{k+9}-\ldots\Bigr)\ge q_k\Bigl(-\delta_{k+1}+\delta_{k+4}\Bigr)\ge
-\lambda_{\cc_{k},1}^{\max}+\lambda_{\cc_{k},4}^{\min},
\\
0>\varepsilon_{k+1,0}\ge
q_{k+1}\Bigl(-\delta_{k+4}-a_{k+7}\delta_{k+6}-a_{k+9}\delta_{k+8}-\ldots\Bigr)\ge q_k\Bigl(-\delta_{k+4}+\delta_{k+5}\Bigr)\ge-\lambda_{\cc_{k+1},3}^{\max}-\lambda_{\cc_{k+1},4}^{\max},
\\
\implies F_{\cc_k}(\varepsilon_{k,0})F_{\cc_{k+1}}(\varepsilon_{k+1,0})\ge 1.0301.
\end{gather*}

\paragraph*{\normalsize{Case 1.1.1.4. $b_{k+3}=0, b_{k+4}=1, b_{k+5}=0, b_{k+6}=0$}.}
In this case, we obtain
\begin{gather*}
0>\varepsilon_{k,0}\ge
q_k\Bigl(-\delta_{k+1}+\delta_{k+4}-a_{k+8}\delta_{k+7}-a_{k+10}\delta_{k+9}-\ldots\Bigr)\ge\\
\ge q_k\Bigl(-\delta_{k+1}+\delta_{k+4}-\delta_{k+6}\Bigr)
\ge q_k\Bigl(-\delta_{k+1}+0.5\delta_{k+4}\Bigr)
\ge -\lambda_{\cc_{k},1}^{\max}+0.5\lambda_{\cc_{k},4}^{\min},
\\
0\ge\varepsilon_{k+1,0}\ge
q_{k+1}\Bigl(-\delta_{k+4}-a_{k+9}\delta_{k+8}-a_{k+11}\delta_{k+10}-\ldots\Bigr)\ge\\
\ge q_{k+1}\Bigl(-\delta_{k+4}-\delta_{k+7}\Bigr)\ge
q_{k+1}\Bigl(-\delta_{k+4}-0.5\delta_{k+5}\Bigr)\ge
-\lambda_{\cc_{k+1},3}^{\max}-0.5\lambda_{\cc_{k+1},4}^{\max},
\\
\implies F_{\cc_k}(\varepsilon_{k,0})F_{\cc_{k+1}}(\varepsilon_{k+1,0})\ge 1.1424.
\end{gather*}

\paragraph*{\normalsize{Case 1.1.1.5. $b_{k+3}=0, b_{k+4}=1, b_{k+5}\ge 1$}.}
In this case, we obtain
\begin{gather*}
0>\varepsilon_{k,0}\ge
q_k\Bigl(-\delta_{k+1}+\delta_{k+4}-(a_{k+6}-1)\delta_{k+5}-a_{k+8}\delta_{k+7}-\ldots\Bigr)\ge
q_k\Bigl(-\delta_{k+1}+\delta_{k+5}\Bigr)\ge
 -\lambda_{\cc_{k},1}^{\max}+\lambda_{\cc_{k},5}^{\min},
\\
\varepsilon_{k+1,0}\ge
q_{k+1}\Bigl(-\delta_{k+4}+\delta_{k+5}-a_{k+7}\delta_{k+6}-a_{k+9}\delta_{k+8}-\ldots\Bigr)\ge
-q_{k+1}\delta_{k+4}\ge -\lambda_{\cc_{k+1},3}^{\max},
\\
\implies F_{\cc_k}(\varepsilon_{k,0})F_{\cc_{k+1}}(\varepsilon_{k+1,0})\ge 1.1945.
\end{gather*}
\paragraph*{\normalsize{Case 1.1.1.6. $b_{k+3}=0, b_{k+4}=2$.}}
The case condition implies $a_{k+5}\ge2$. We obtain
\begin{gather*}
0>\varepsilon_{k,0}\ge
q_k\Bigl(-\delta_{k+1}+2\delta_{k+4}-(a_{k+6}-1)\delta_{k+5}-a_{k+8}\delta_{k+7}-a_{k+10}\delta_{k+9}-\ldots\Bigr)\ge\\
\ge q_k\Bigl(-\delta_{k+1}+\delta_{k+4}+\delta_{k+5}\Bigr)\ge
-\lambda_{\cc_{k},1}^{\max}+\lambda_{\cc_{k},4}^{\min}+\lambda_{\cc_{k},5}^{\min},
\\
0>\varepsilon_{k+1,0}\ge
 q_{k+1}\Bigl(-2\delta_{k+4}-a_{k+7}\delta_{k+6}-a_{k+9}\delta_{k+8}-\ldots\Bigr)\ge q_{k+1}\Bigl(-2\delta_{k+4}-\delta_{k+5}\Bigr)=\\
 =q_{k+1}\Bigl(-\delta_{k+3}+(a_{k+5}-2)\delta_{k+4}\Bigr)\ge
-\lambda_{\cc_{k+1},3}^{\max}+(a_{k+5}-2)\lambda_{\cc_{k+1},4}^{\min},
\\
\implies F_{\cc_k}(\varepsilon_{k,0})F_{\cc_{k+1}}(\varepsilon_{k+1,0})\ge 1.0183.
\end{gather*}

\paragraph*{\normalsize{Case 1.1.1.7. $b_{k+3}=0, b_{k+4}=3$.}}
The case condition implies $a_{k+5}=3$. We obtain 
\begin{gather*}
0>\varepsilon_{k,0}\ge
q_k\Bigl(-\delta_{k+1}+3\delta_{k+4}-(a_{k+6}-1)\delta_{k+5}-a_{k+8}\delta_{k+7}-\ldots\Bigr)\ge
q_k\Bigl(-\delta_{k+1}+2\delta_{k+4}+\delta_{k+5}\Bigr)\ge
-\lambda_{\cc_{k},1}^{\max}+2\lambda_{\cc_{k},4}^{\min}+\lambda_{\cc_{k},5}^{\min},
\\
0>\varepsilon_{k+1,0}\ge
q_{k+1}\Bigl(-3\delta_{k+4}-a_{k+7}\delta_{k+6}-a_{k+9}\delta_{k+8}-\ldots\Bigr)\ge q_{k+1}\Bigl(-3\delta_{k+4}-\delta_{k+5}\Bigr)=
 q_{k+1}\Bigl(-\delta_{k+3}\Bigr)\ge
-\lambda_{\cc_{k+1},3}^{\max},
\\
F_{\cc_k}(\varepsilon_{k,0})F_{\cc_{k+1}}(\varepsilon_{k+1,0})\ge 1.0189.
\end{gather*}

\paragraph*{\large{Case 1.1.2. $b_{k+2}=1$.}}
In this case, we have $a_{k+3}\ge 2$.
\paragraph*{\normalsize{Case 1.1.2.1. $b_{k+3}=0, b_{k+4}=0, b_{k+5}=0$}.}
We obtain
\begin{gather*}
0>\varepsilon_{k,0}\ge
q_k\Bigl(-\delta_{k+1}+\delta_{k+2}-a_{k+8}\delta_{k+7}-a_{k+10}\delta_{k+9}-\ldots\Bigr)\ge\\
\ge q_k\Bigl(-\delta_{k+1}+\delta_{k+2}-\delta_{k+6}\Bigr)\ge
q_k\Bigl(-\delta_{k+1}+\delta_{k+2}-0.5\delta_{k+4}\Bigr)\ge
\lambda_{\cc_{k},1}^{\max}+\lambda_{\cc_{k},2}^{\min}-0.5\lambda_{\cc_{k},4}^{\max},
\\
0>\varepsilon_{k+1,0}\ge
q_{k+1}\Bigl(-\delta_{k+2}-a_{k+7}\delta_{k+6}-a_{k+9}\delta_{k+8}-\ldots\Bigr)\ge
q_{k+1}\Bigl(-\delta_{k+2}-\delta_{k+5}\Bigr)\ge
-\lambda_{\cc_{k+1},1}^{\max}-\lambda_{\cc_{k+1},4}^{\max},
\\
\implies\max\bigl(F_{\cc_k}(\alpha, \varepsilon_{k,0}), F_{\cc_k}(\alpha, \varepsilon_{k,0})F_{\cc_{k+1}}(\alpha, \varepsilon_{k+1,0})\bigr)\ge 1.0046.
\end{gather*}

\paragraph*{\normalsize{Case 1.1.2.2. $b_{k+3}=0, b_{k+4}=0, b_{k+5}\ge 1$}.}
We obtain
\begin{gather*}
0>\varepsilon_{k,0}\ge
q_k\Bigl(-\delta_{k+1}+\delta_{k+2}-a_{k+6}\delta_{k+5}-a_{k+8}\delta_{k+7}-\ldots\Bigr)\ge
q_k\Bigl(-\delta_{k+1}+\delta_{k+2}-\delta_{k+4}\Bigr)\ge
-\lambda_{\cc_{k},1}^{\max}+\lambda_{\cc_{k},2}^{\min}-\lambda_{\cc_{k},4}^{\max},
\\
0>\varepsilon_{k+1,0}\ge
q_{k+1}\Bigl(-\delta_{k+2}+\delta_{k+5}-(a_{k+7}-1)\delta_{k+6}-a_{k+9}\delta_{k+8}-\ldots\Bigr)\ge
-q_{k+1}\delta_{k+2}\ge-\lambda_{\cc_{k+1},1}^{\max},
\\
\implies\max\bigl(F_{\cc_k}(\alpha, \varepsilon_{k,0}), F_{\cc_k}(\alpha, \varepsilon_{k,0})F_{\cc_{k+1}}(\alpha, \varepsilon_{k+1,0})\bigr)\ge 1.071.
\end{gather*}

\paragraph*{\normalsize{Case 1.1.2.3. $b_{k+3}=0, b_{k+4}\ge 1$}.}
We obtain
\begin{gather*}
0>\varepsilon_{k,0}\ge
q_k\Bigl(-\delta_{k+1}+\delta_{k+2}+\delta_{k+4}-(a_{k+6}-1)\delta_{k+5}-a_{k+8}\delta_{k+5}-\ldots\Bigr)\ge\\
\ge q_k\Bigl(-\delta_{k+1}+\delta_{k+2}+\delta_{k+5}\Bigr)\ge
-\lambda_{\cc_{k},1}^{\max}+\lambda_{\cc_{k},2}^{\min}+
\lambda_{\cc_{k},5}^{\min},
\\
0>\varepsilon_{k+1,0}\ge
q_{k+1}\Bigl(-\delta_{k+2}-\delta_{k+4}-a_{k+7}\delta_{k+6}-a_{k+9}\delta_{k+8}-\ldots\Bigr)\ge\\
\ge q_{k+1}\Bigl(-\delta_{k+2}-\delta_{k+4}-\delta_{k+5}\Bigr)\ge
-\lambda_{\cc_{k+1},1}^{\max}-\lambda_{\cc_{k+1},3}^{\max}-\lambda_{\cc_{k+1},4}^{\max}.
\end{gather*}
For the quantity $\varepsilon_{k+2,0}$, we use \eqref{greater0} and inequality (\ref{mainineq_posoper_t0_sharp}). Thus, we obtain
$$
\max\bigl(F_{\cc_k}(\alpha,\ \varepsilon_{k,0}), F_{\cc_k}(\alpha, \varepsilon_{k,0})F_{\cc_{k+1}}(\alpha, \varepsilon_{k+1,0})
F_{\cc_{k+2}}(\alpha, \varepsilon_{k+2,0})\bigr)\ge 1.02901.
$$

\paragraph*{\normalsize{Case 1.1.2.4. $b_{k+3}\ge 1$}.}
In this case, one can see that $a_{k+4}\ge 2$. We obtain
\begin{gather*}
0>\varepsilon_{k,0}\ge
q_k\Bigl(-\delta_{k+1}+\delta_{k+2}-(a_{k+4}-1)\delta_{k+3}-a_{k+6}\delta_{k+5}-a_{k+8}\delta_{k+7}-\ldots\Bigr)\ge
q_k\Bigl(-\delta_{k+1}+\delta_{k+3}\Bigr)\ge
-\lambda_{\cc_{k},1}^{\max}+\lambda_{\cc_{k},3}^{\min},
\\
0>\varepsilon_{k+1,0}\ge
q_{k+1}\Bigl(-\delta_{k+2}+\delta_{k+3}-(a_{k+5}-1)\delta_{k+4}-a_{k+7}\delta_{k+6}-\ldots\Bigr)\ge
q_{k+1}\Bigl(-\delta_{k+2}+\delta_{k+4}\Bigr)\ge
-\lambda_{\cc_{k+1},1}^{\max}+\lambda_{\cc_{k+1},3}^{\min},
\\
\implies \max\bigl(F_{\cc_k}(\alpha, \varepsilon_{k,0}), F_{\cc_k}(\alpha, \varepsilon_{k,0})F_{\cc_{k+1}}(\alpha, \varepsilon_{k+1,0})\bigr)\ge 1.0705.
\end{gather*}
\paragraph*{\large{Case 1.1.3. $b_{k+2}=2$.}}
In this case, we have $a_{k+3}=3$.

\paragraph*{\normalsize{Case 1.1.3.1. $b_{k+3}=0, b_{k+4}=0, b_{k+5}=0, b_{k+6}=0$}.}
We obtain
\begin{gather*}
0>\varepsilon_{k,0}\ge
q_k\Bigl(-\delta_{k+1}+2\delta_{k+2}-a_{k+8}\delta_{k+7}-a_{k+10}\delta_{k+9}-\ldots\Bigr)\ge
q_k\Bigl(-\delta_{k+1}+2\delta_{k+2}-\delta_{k+6}\Bigr)\ge\\
\ge q_k\Bigl(-\delta_{k+1}+2\delta_{k+2}-0.5\delta_{k+4}\Bigr)\ge
-\lambda_{\cc_{k},1}^{\max}+2\lambda_{\cc_{k},2}^{\min}-
0.5\lambda_{\cc_{k},4}^{\max},
\\
0>\varepsilon_{k+1,0}\ge
q_{k+1}\Bigl(-2\delta_{k+2}-a_{k+9}\delta_{k+8}-a_{k+11}\delta_{k+10}-\ldots\Bigr)\ge
q_{k+1}\Bigl(-2\delta_{k+2}-\delta_{k+7}\Bigr)\ge\\
\ge q_{k+1}\Bigl(-2\delta_{k+2}-0.5\delta_{k+5}\Bigr)\ge
-2\lambda_{\cc_{k+1},1}^{\max}-0.5\lambda_{\cc_{k+1},4}^{\max},
\\
\implies \max\bigl(F_{\cc_k}(\alpha, \varepsilon_{k,0}), F_{\cc_k}(\alpha, \varepsilon_{k,0})F_{\cc_{k+1}}(\alpha, \varepsilon_{k+1,0})\bigr)\ge 1.03587.
\end{gather*}

\paragraph*{\normalsize{Case 1.1.3.2. $b_{k+3}=0, b_{k+4}=0, b_{k+5}=0, b_{k+6}\ge 1$}.}
We obtain
\begin{gather*}
0>\varepsilon_{k,0}\ge
q_k\Bigl(-\delta_{k+1}+2\delta_{k+2}+\delta_{k+6}-(a_{k+8}-1)\delta_{k+7}-a_{k+10}\delta_{k+9}-\ldots\Bigr)\ge
q_k\Bigl(-\delta_{k+1}+2\delta_{k+2}\Bigr)\ge
-\lambda_{\cc_{k},1}^{\max}+2\lambda_{\cc_{k},2}^{\min},
\\
0>\varepsilon_{k+1,0}\ge
q_{k+1}\Bigl(-2\delta_{k+2}-a_{k+7}\delta_{k+6}-
-a_{k+9}\delta_{k+8}-\ldots\Bigr)\ge
q_{k+1}\Bigl(-2\delta_{k+2}-\delta_{k+5}\Bigr)\ge
-2\lambda_{\cc_{k+1},1}^{\max}-\lambda_{\cc_{k+1},4}^{\max},
\\
\implies \max\bigl(F_{\cc_k}(\alpha, \varepsilon_{k,0}), F_{\cc_k}(\alpha, \varepsilon_{k,0})F_{\cc_{k+1}}(\alpha, \varepsilon_{k+1,0})\bigr)\ge 1.0139.
\end{gather*}

\paragraph*{\normalsize{Case 1.1.3.3. $b_{k+3}=0, b_{k+4}=0, b_{k+5}\ge 1$}.}
We obtain
\begin{gather*}
0>\varepsilon_{k,0}\ge
q_k\Bigl(-\delta_{k+1}+2\delta_{k+2}-a_{k+6}\delta_{k+5}-a_{k+8}\delta_{k+7}-\ldots\Bigr)\ge
q_k\Bigl(-\delta_{k+1}+2\delta_{k+2}-\delta_{k+4}\Bigr)\ge
-\lambda_{\cc_{k},1}^{\max}+2\lambda_{\cc_{k},2}^{\min}-
\lambda_{\cc_{k},4}^{\max},
\\
0>\varepsilon_{k+1,0}\ge
q_{k+1}\Bigl(-2\delta_{k+2}+\delta_{k+5}-a_{k+7}\delta_{k+6}
-a_{k+9}\delta_{k+8}-\ldots\Bigr)\ge
-2q_{k+1}\delta_{k+2}\ge
-2\lambda_{\cc_{k+1},1}^{\max},
\\
\implies \max\bigl(F_{\cc_k}(\alpha, \varepsilon_{k,0}), F_{\cc_k}(\alpha, \varepsilon_{k,0})F_{\cc_{k+1}}(\alpha, \varepsilon_{k+1,0})\bigr)\ge 1.0466.
\end{gather*}

\paragraph*{\normalsize{Case 1.1.3.4. $b_{k+3}=0, b_{k+4}=1, b_{k+5}=0$}.}
We obtain
\begin{gather*}
0>\varepsilon_{k,0}\ge
q_k\Bigl(-\delta_{k+1}+2\delta_{k+2}+\delta_{k+4}-a_{k+8}\delta_{k+7}-a_{k+10}\delta_{k+9}-\ldots\Bigr)\ge\\
\ge q_k\Bigl(-\delta_{k+1}+2\delta_{k+2}+\delta_{k+4}-\delta_{k+6}\Bigr)\ge
q_k\Bigl(-\delta_{k+1}+2\delta_{k+2}+0.5\delta_{k+4}\Bigr)\ge
-\lambda_{\cc_{k},1}^{\max}+2\lambda_{\cc_{k},2}^{\min}+
0.5\lambda_{\cc_{k},4}^{\min},
\\
0>\varepsilon_{k+1,0}\ge
q_{k+1}\Bigl(-2\delta_{k+2}-\delta_{k+4}-a_{k+7}\delta_{k+6}-
-a_{k+9}\delta_{k+8}-\ldots\Bigr)\ge\\
\ge q_{k+1}\Bigl(-2\delta_{k+2}-\delta_{k+4}-\delta_{k+5}\Bigr)\ge
-2\lambda_{\cc_{k+1},1}^{\max}-\lambda_{\cc_{k+1},3}^{\max}-
\lambda_{\cc_{k+1},4}^{\max},
\\
\implies \max\bigl(F_{\cc_k}(\alpha, \varepsilon_{k,0}), F_{\cc_k}(\alpha, \varepsilon_{k,0})F_{\cc_{k+1}}(\alpha, \varepsilon_{k+1,0})\bigr)\ge 1.0032.
\end{gather*}

\paragraph*{\normalsize{Case 1.1.3.5. $b_{k+3}=0, b_{k+4}=1, b_{k+5}\ge 1$}.}
We obtain
\begin{gather*}
0>\varepsilon_{k,0}\ge
q_k\Bigl(-\delta_{k+1}+2\delta_{k+2}+\delta_{k+4}-(a_{k+6}-1)\delta_{k+5}-a_{k+8}\delta_{k+7}-\ldots\Bigr)\ge\\
\ge q_k\Bigl(-\delta_{k+1}+2\delta_{k+2}+\delta_{k+5}\Bigr)\ge
-\lambda_{\cc_{k},1}^{\max}+2\lambda_{\cc_{k},2}^{\min}+
\lambda_{\cc_{k},5}^{\min},
\\
0>\varepsilon_{k+1,0}\ge
q_{k+1}\Bigl(-2\delta_{k+2}-\delta_{k+4}+\delta_{k+5}-a_{k+7}\delta_{k+6}
-a_{k+9}\delta_{k+8}-\ldots\Bigr)\ge
q_{k+1}\Bigl(-2\delta_{k+2}-\delta_{k+4}\Bigr)\ge
-2\lambda_{\cc_{k+1},1}^{\max}-\lambda_{\cc_{k+1},3}^{\max},
\\
\implies \max\bigl(F_{\cc_k}(\alpha, \varepsilon_{k,0}), F_{\cc_k}(\alpha, \varepsilon_{k,0})F_{\cc_{k+1}}(\alpha, \varepsilon_{k+1,0})\bigr)\ge 1.0084.
\end{gather*}

\paragraph*{\normalsize{Case 1.1.3.6. $b_{k+3}=0, b_{k+4}\ge 2$}.}
In this case, we have $a_{k+5}\ge 2$. We obtain
\begin{gather*}
0>\varepsilon_{k,0}\ge
q_k\Bigl(-\delta_{k+1}+2\delta_{k+2}+2\delta_{k+4}-(a_{k+6}-1)\delta_{k+5}-a_{k+8}\delta_{k+7}-\ldots\Bigr)\ge\\
\ge q_k\Bigl(-\delta_{k+1}+2\delta_{k+2}+\delta_{k+4}+\delta_{k+5}\Bigr)\ge
-\lambda_{\cc_{k},1}^{\max}+2\lambda_{\cc_{k},2}^{\min}+
\lambda_{\cc_{k},4}^{\min}+\lambda_{\cc_{k},5}^{\min},
\\
\implies F_{\cc_k}(\alpha, \varepsilon_{k,0})>1.0179.
\end{gather*}

\paragraph*{\normalsize{Case 1.1.3.7. $b_{k+3}\ge 1$}.}
We obtain
\begin{gather*}
0>\varepsilon_{k,0}\ge
q_k\Bigl(-\delta_{k+1}+2\delta_{k+2}-(a_{k+4}-1)\delta_{k+3}-a_{k+6}\delta_{k+5}-a_{k+8}\delta_{k+7}-\ldots\Bigr)\ge\\
\ge q_k\Bigl(-\delta_{k+1}+\delta_{k+2}+\delta_{k+3}\Bigr)\ge
-\lambda_{\cc_{k},1}^{\max}+\lambda_{\cc_{k},2}^{\min}+
\lambda_{\cc_{k},3}^{\min},
\\
0>\varepsilon_{k+1,0}\ge
q_{k+1}\Bigl(-2\delta_{k+2}+\delta_{k+3}-(a_{k+5}-1)\delta_{k+4}-a_{k+7}\delta_{k+6}-\ldots\Bigr)\ge
q_{k+1}\Bigl(-2\delta_{k+2}+\delta_{k+4}\Bigr)\ge
-2\lambda_{\cc_{k+1},1}^{\max}+\lambda_{\cc_{k+1},3}^{\min},
\\
\implies \max\bigl(F_{\cc_k}(\alpha, \varepsilon_{k,0}), F_{\cc_k}(\alpha, \varepsilon_{k,0})F_{\cc_{k+1}}(\alpha, \varepsilon_{k+1,0})\bigr)\ge 1.0325.
\end{gather*}

\paragraph*{\Large{Case 1.2. $b_{k+1}=2$.}}
\ \\
In this case, we have $a_{k+2}=3$.
We use the following lower estimate of $\varepsilon_{k,0}$.
\begin{equation*}
\begin{split}
\label{case12eq}
\varepsilon_{k,0}\ge
q_k\Bigl(-2\delta_{k+1}-a_{k+4}\delta_{k+3}-a_{k+6}\delta_{k+5}-\ldots\Bigr)\ge
-q_k\Bigl(-2\delta_{k+1}-\delta_{k+2}\Bigr)\ge
-2\lambda_{\cc_{k+1},1}^{\max}-\lambda_{\cc_{k+1},2}^{\max}.
\end{split}
\end{equation*}
The perturbation $\varepsilon_{k+1,1}$ is estimated by (\ref{mainineq_posoper_t1_sharp}). We obtain
\begin{equation*}
F_{\cc_k}(\alpha, \varepsilon_{k,0})F_{\cc_{k+1}}(\alpha, \varepsilon_{k+1,1})\ge 1.53477.
\end{equation*}

\subsubsection*{\LARGE{Case 2. $b_{k+1}=0$.}}
\paragraph*{\Large{Case 2.1. $b_{k+2}\ge 1$.}}
\ \\
In this case, it follows immediately from (\ref{mainineq_posoper_t0_sharp}) and \eqref{greater0} that
$F_{\cc_k}(\alpha, \varepsilon_{k,0})>1.14671$.

\paragraph*{\Large{Case 2.2. $b_{k+2}=0$.}}
\paragraph*{\large{Case 2.2.1. $b_{k+3}=0$.}}

\paragraph*{\normalsize{Case 2.2.1.1. $b_{k+4}=0, b_{k+5}=0$}.} 
We obtain
\begin{gather*}
\varepsilon_{k,0}\ge
q_k\Bigl(-a_{k+8}\delta_{k+7}-a_{k+10}\delta_{k+9}-\ldots\Bigr)\ge
-q_k\delta_{k+6}\ge -0.5 q_k\delta_{k+4}\ge-0.5\lambda_{\cc_{k},4}^{\max},
\\
\implies F_{\cc_k}(\alpha, \varepsilon_{k,0})>1.0841.
\end{gather*}

\paragraph*{\normalsize{Case 2.2.1.2. $b_{k+4}=0, b_{k+5}\ge1$, \textnormal{minimal $r>5$ such that 
$b_{k+r}\ge1$ is \textbf{odd}}}.}
We obtain
\begin{equation*}
0>\varepsilon_{k,0}\ge
q_k\Bigl(-a_{k+6}\delta_{k+5}-a_{k+8}\delta_{k+7}-\ldots\Bigr)\ge
-q_k\delta_{k+4}\ge -\lambda_{\cc_{k},4}^{\max}.
\end{equation*}
On the other hand, by \eqref{greater0}, we see that $\varepsilon_{k+5,0}\ge 0$. Using the inequality (\ref{mainineq_posoper_t0_sharp}), we obtain
$$
F_{\cc_k}(\alpha, \varepsilon_{k,0})F_{\cc_{k+5}}(\alpha, \varepsilon_{k+5,0})>1.1296.
$$

\paragraph*{\normalsize{Case 2.2.1.3. $b_{k+4}=0, b_{k+5}\ge1$, \textnormal{minimal $r>5$ such that 
$b_{k+r}\ge1$ is \textbf{even}}}.}
We use the following lower estimate of $\varepsilon_{k,0}$ in this case
\begin{equation*}
0>\varepsilon_{k,0}\ge
q_k\Bigl(-b_{k+5}\delta_{k+5}+b_{k+r}\delta_{k+r}-\ldots\Bigr)\ge
-b_{k+5}q_k\delta_{k+5}\ge -b_{k+5}\lambda_{\cc_{k},5}^{\max}.
\end{equation*}
Using (\ref{mainineq_posoper_t0_sharp}), we see that
$$
F_{\cc_k}(\alpha, \varepsilon_{k,0})\prod\limits_{t=1}^{b_{k+5}-1}F_{\cc_{k+5}}(\alpha, \varepsilon_{k+5,t})\ge F_{\cc_k}(\alpha, \varepsilon_{k,0})1.14671^{(b_{k+5}-1)}> 1.0596.
$$
\paragraph*{\normalsize{Case 2.2.1.4 $b_{k+4}\ge1$}.}
Follows from (\ref{mainineq_posoper_t0_sharp}) and \eqref{greater0}.

\paragraph*{\large{Case 2.2.2. $b_{k+3}=1, b_{k+4}=0$}.}

\paragraph*{\normalsize{Case 2.2.2.1. $b_{k+5}=0$, \textnormal{minimal $r>5$ such that 
$b_{k+r}\ge1$ is \textbf{odd}}}.}
We obtain
\begin{equation*}
\begin{split}
0>\varepsilon_{k,0}&\ge
q_k\Bigl(-\delta_{k+3}-b_{k+r}\delta_{k+r}-\ldots\Bigr)\ge
q_k\Bigl(-\delta_{k+3}-a_{k+8}\delta_{k+7}-a_{k+10}\delta_{k+9}-\ldots\Bigr)\ge \\&\ge
q_k\Bigl(-\delta_{k+3}-\delta_{k+6}\Bigr)\ge
q_k\Bigl(-\delta_{k+3}-0.5\delta_{k+4}\Bigr)\ge
-\lambda_{\cc_{k},3}^{\max}-0.5\lambda_{\cc_{k},4}^{\max}.
\end{split}
\end{equation*}
By \eqref{greater0}, we have $\varepsilon_{k+3,0}>0$. As the pattern $\cc_k$ is fixed, we know the first six elements of the pattern $\cc_{k+3}$. Denote 
$$
\cc^{(i,j,k)}=(c_4,c_5,\ldots,c_9,i,j,k),\quad i,j,k\in\{1,2,3\}.
$$
We have
\begin{equation*}
F_{\cc_{k+3}}(\varepsilon_{k+3,0})\ge F_{\cc_{k+3}}(0)\ge \min\limits_{1\le i,j,k\le 3}\bigl(F_{\cc^{(i,j,k)}}(0)\bigr)
\end{equation*}
and finally, obtain
$$
F_{\cc_k}(\alpha, \varepsilon_{k,0})F_{\cc_{k+3}}(\alpha, \varepsilon_{k+3,0})\ge 1.432.
$$

\paragraph*{\normalsize{Case 2.2.2.2. $b_{k+5}=0$, \textnormal{minimal $r>5$ such that 
$b_{k+r}\ge1$ is \textbf{even}}}.}
We obtain
\begin{gather*}
0>\varepsilon_{k,0}\ge
q_k\Bigl(-\delta_{k+3}+b_{k+r}\delta_{k+r}-\ldots\Bigr)\ge
-q_k\delta_{k+3}\ge
-\lambda_{\cc_{k},3}^{\max},
\\
0>\varepsilon_{k+3,0}\ge
q_{k+3}\Bigl(-a_{k+7}\delta_{k+6}-a_{k+9}\delta_{k+8}-\ldots\Bigr)\ge
-q_{k+3}\delta_{k+5}\ge -\lambda_{\cc_{k+3},2}^{\max},
\\
\implies \max\bigl(F_{\cc_k}(\alpha, \varepsilon_{k,0}),F_{\cc_k}(\alpha, \varepsilon_{k,0})F_{\cc_{k+3}}(\alpha, \varepsilon_{k+3,0})\bigr)\ge 1.0535.
\end{gather*}

\paragraph*{\normalsize{Case 2.2.2.3. $b_{k+5}\ge 1$}.}
We obtain
\begin{gather*}
0>\varepsilon_{k,0}\ge
q_k\Bigl(-\delta_{k+3}-a_{k+6}\delta_{k+5}-a_{k+8}\delta_{k+7}-\ldots\Bigr)\ge
q_k\Bigl(-\delta_{k+3}-\delta_{k+4}\Bigr)\ge -\lambda_{\cc_{k},3}^{\max}-\lambda_{\cc_{k},4}^{\max},
\\
\lambda_{\cc_{k+3},1}^{\max}\ge q_{k+3}\delta_{k+4}\ge\varepsilon_{k+3,0}\ge
 q_{k+3}\Bigl(\delta_{k+5}-(a_{k+7}-1)\delta_{k+6}-a_{k+9}\delta_{k+8}-\ldots\Bigr)\ge
q_{k+3}\delta_{k+6}\ge \lambda_{\cc_{k+3},3}^{\min},
\\
\implies F_{\cc_k}(\alpha, \varepsilon_{k,0})F_{\cc_{k+3}}(\alpha, \varepsilon_{k+3,0})\ge 1.434.
\end{gather*}

\paragraph*{\large{Case 2.2.3. $b_{k+3}=1, b_{k+4}\ge 1$}.}
In this case, we have $a_{k+5}\ge 2$.

\paragraph*{\normalsize{Case 2.2.3.1. $b_{k+4}=1, b_{k+5}=0, b_{k+6}=0$}}
We obtain
\begin{gather*}
0>\varepsilon_{k,0}\ge
q_k\Bigl(-\delta_{k+3}+\delta_{k+4}-a_{k+8}\delta_{k+7}-a_{k+10}\delta_{k+9}-\ldots\Bigr)\ge\\
\ge q_k\Bigl(-\delta_{k+3}+\delta_{k+4}-\delta_{k+6}\Bigr)\ge
q_k\Bigl(-\delta_{k+3}+0.5\delta_{k+4}\Bigr)\ge
-\lambda_{\cc_{k},3}^{\max}+0.5\lambda_{\cc_{k},4}^{\min},
\\
0>\varepsilon_{k+3,0}\ge
q_{k+3}\Bigl(-\delta_{k+4}-a_{k+9}\delta_{k+8}-a_{k+11}\delta_{k+10}-\ldots\Bigr)\ge\\
\ge q_{k+3}\Bigl(-\delta_{k+4}-\delta_{k+7}\Bigr)\ge
q_{k+3}\Bigl(-\delta_{k+4}-0.5\delta_{k+5}\Bigr)\ge
-\lambda_{\cc_{k+3},1}^{\max}-0.5\lambda_{\cc_{k+3},2}^{\max},
\\
\implies \max\bigl(F_{\cc_k}(\alpha, \varepsilon_{k,0}), F_{\cc_k}(\alpha, \varepsilon_{k,0})F_{\cc_{k+3}}(\alpha, \varepsilon_{k+3,0})\bigr)\ge 1.073.
\end{gather*}

\paragraph*{\normalsize{Case 2.2.3.2. $b_{k+4}=1, b_{k+5}=0, b_{k+6}\ge 1$}.}
We obtain
\begin{gather*}
0>\varepsilon_{k,0}\ge
q_k\Bigl(-\delta_{k+3}+\delta_{k+4}+\delta_{k+6}-(a_{k+8}-1)\delta_{k+7}-a_{k+10}\delta_{k+9}-\ldots\Bigr)\ge
-q_k\Bigl(-\delta_{k+3}+\delta_{k+4}\Bigr)\ge
-\lambda_{\cc_{k},3}^{\max}+\lambda_{\cc_{k},4}^{\min},
\\
0>\varepsilon_{k+3,0}\ge
q_{k+3}\Bigl(-\delta_{k+4}-a_{k+7}\delta_{k+6}-a_{k+9}\delta_{k+8}-\ldots\Bigr)\ge
q_{k+3}\Bigl(-\delta_{k+4}-\delta_{k+5}\Bigr)\ge -\lambda_{\cc_{k+3},1}^{\max}-\lambda_{\cc_{k+3},2}^{\max},
\\
\implies \max\bigl(F_{\cc_k}(\alpha, \varepsilon_{k,0}), F_{\cc_k}(\alpha, \varepsilon_{k,0})F_{\cc_{k+3}}(\alpha, \varepsilon_{k+3,0})\bigr)\ge 1.063.
\end{gather*}

\paragraph*{\normalsize{Case 2.2.3.3. $b_{k+4}=1, b_{k+5}\ge 1$}.}
We obtain
\begin{gather*}
0>\varepsilon_{k,0}\ge
q_k\Bigl(-\delta_{k+3}+\delta_{k+4}-(a_{k+6}-1)\delta_{k+5}-a_{k+8}\delta_{k+7}-\ldots\Bigr)\ge
q_k\Bigl(-\delta_{k+3}+\delta_{k+5}\Bigr)\ge
-\lambda_{\cc_{k},3}^{\max}+\lambda_{\cc_{k},5}^{\min},
\\
0>\varepsilon_{k+3,0}\ge
q_{k+3}\Bigl(-\delta_{k+4}+\delta_{k+5}-(a_{k+7}-1)\delta_{k+6}-a_{k+9}\delta_{k+8}-\ldots\Bigr)\ge
q_{k+3}\Bigl(-\delta_{k+4}+\delta_{k+6}\Bigr)\ge
-\lambda_{\cc_{k+3},1}^{\max}+\lambda_{\cc_{k+3},3}^{\min},
\\
\implies \max\bigl(F_{\cc_k}(\alpha, \varepsilon_{k,0}), F_{\cc_k}(\alpha, \varepsilon_{k,0})F_{\cc_{k+3}}(\alpha, \varepsilon_{k+3,0})\bigr)\ge 1.0622.
\end{gather*}

\paragraph*{\normalsize{Case 2.2.3.4. $b_{k+4}=2, b_{k+5}=0, b_{k+6}=0$}.}
\ \\
Note that in cases 2.2.3.4-2.2.3.6 we have $a_{k+5}=3$. We obtain
\begin{gather*}
0>\varepsilon_{k,0}\ge
q_k\Bigl(-\delta_{k+3}+2\delta_{k+4}-a_{k+8}\delta_{k+7}-a_{k+10}\delta_{k+9}-\ldots\Bigr)\ge\\
\ge q_k\Bigl(-\delta_{k+3}+2\delta_{k+4}-\delta_{k+6}\Bigr)\ge
q_k\Bigl(-\delta_{k+3}+1.5\delta_{k+4}\Bigr)\ge
-\lambda_{\cc_{k},3}^{\max}+1.5\lambda_{\cc_{k},4}^{\min},
\\
0>\varepsilon_{k+3,0}\ge
q_{k+3}\Bigl(-2\delta_{k+4}-a_{k+9}\delta_{k+8}-a_{k+11}\delta_{k+10}-\ldots\Bigr)\ge\\
\ge q_{k+3}\Bigl(-2\delta_{k+4}-\delta_{k+7}\Bigr)\ge
q_{k+3}\Bigl(-2\delta_{k+4}-0.5\delta_{k+5}\Bigr)\ge
-2\lambda_{\cc_{k+3},1}^{\max}+0.5\lambda_{\cc_{k+3},2}^{\min},
\\
\implies \max\bigl(F_{\cc_k}(\alpha, \varepsilon_{k,0}), F_{\cc_k}(\alpha, \varepsilon_{k,0})F_{\cc_{k+3}}(\alpha, \varepsilon_{k+3,0})\bigr)\ge 1.0659.
\end{gather*}

\paragraph*{\normalsize{Case 2.2.3.5. $b_{k+4}=2, b_{k+5}=0, b_{k+6}\ge 1$}.}
We obtain
\begin{gather*}
0>\varepsilon_{k,0}\ge
q_k\Bigl(-\delta_{k+3}+2\delta_{k+4}+\delta_{k+6}-(a_{k+8}-1)\delta_{k+7}-a_{k+10}\delta_{k+9}-\ldots\Bigr)\ge
q_k\Bigl(-\delta_{k+3}+2\delta_{k+4}\Bigr)\ge -\lambda_{\cc_{k},3}^{\max}+2\lambda_{\cc_{k},4}^{\min},
\\
\implies F_{\cc_k}(\alpha, \varepsilon_{k,0})\ge 1.0262.
\end{gather*}

\paragraph*{\normalsize{Case 2.2.3.6. $b_{k+4}=2, b_{k+5}\ge 1$}.}
We obtain
\begin{gather*}
0>\varepsilon_{k,0}\ge
q_k\Bigl(-\delta_{k+3}+2\delta_{k+4}-(a_{k+6}-1)\delta_{k+5}-a_{k+8}\delta_{k+7}-\ldots\Bigr)\ge
q_k\Bigl(-\delta_{k+3}+\delta_{k+4}+\delta_{k+5}\Bigr)\ge
-\lambda_{\cc_{k},3}^{\max}+\lambda_{\cc_{k},4}^{\min}+
\lambda_{\cc_{k},5}^{\min},
\\
0>\varepsilon_{k+3,0}\ge
q_{k+3}\Bigl(-2\delta_{k+4}+\delta_{k+5}-(a_{k+7}-1)\delta_{k+6}-a_{k+9}\delta_{k+8}-\ldots\Bigr)\ge
q_{k+3}\Bigl(-2\delta_{k+4}+\delta_{k+6}\Bigr)\ge
-2\lambda_{\cc_{k+3},1}^{\max}+\lambda_{\cc_{k+3},3}^{\min},
\\
\implies \max\bigl(F_{\cc_k}(\alpha, \varepsilon_{k,0}), F_{\cc_k}(\alpha, \varepsilon_{k,0})F_{\cc_{k+3}}(\alpha, \varepsilon_{k+3,0})\bigr)\ge 1.0522.
\end{gather*}

\paragraph*{\large{Case 2.2.4. $b_{k+3}=2$}.}
\ \\
In this case, we have $a_{k+4}\ge 2$. We obtain
\begin{gather*}
0>\varepsilon_{k,0}\ge
q_k\Bigl(-2\delta_{k+3}-a_{k+6}\delta_{k+5}-a_{k+8}\delta_{k+7}-\ldots\Bigr)\ge
q_k\Bigl(-2\delta_{k+3}-\delta_{k+4}\Bigr)\ge
-2\lambda_{\cc_{k},3}^{\max}-\lambda_{\cc_{k},4}^{\max},
\\
\lambda_{\cc_{k+3}}^{\max}+\lambda_{\cc_{k+3},1}^{\max}\ge\varepsilon_{k+3,1}>0,
\\
\implies \max\bigl(F_{\cc_k}(\alpha, \varepsilon_{k,0}), F_{\cc_k}(\alpha, \varepsilon_{k,0})F_{\cc_{k+3}}(\alpha, \varepsilon_{k+3,1})\bigr)\ge 1.14807.
\end{gather*}

\paragraph*{\large{Case 2.2.5. $b_{k+3}=3$}.}
\ \\
In this case, we have $a_{k+4}=3$. We obtain
\begin{gather*}
0>\varepsilon_{k,0}\ge
q_k\Bigl(-3\delta_{k+3}-a_{k+6}\delta_{k+5}-a_{k+8}\delta_{k+7}-\ldots\Bigr)\ge
q_k\Bigl(-3\delta_{k+3}-\delta_{k+4}\Bigr)\ge
-3\lambda_{\cc_{k},3}^{\max}-\lambda_{\cc_{k},4}^{\max},
\\
t\lambda_{\cc_{k+3}}^{\max}+\lambda_{\cc_{k+3},1}^{\max}\ge\varepsilon_{k+3,t}>(t-1)\lambda_{\cc_{k+3}}^{\min}, \quad t\ge 1,
\\
\implies\max\bigl(F_{\cc_k}(\alpha, \varepsilon_{k,0}), F_{\cc_k}(\alpha, \varepsilon_{k,0})F_{\cc_{k+3}}(\alpha, \varepsilon_{k+3,1})F_{\cc_{k+3}}(\alpha, \varepsilon_{k+3,2})\bigr)\ge 1.4541.
\end{gather*}\\

\section*{Appendix. Proof of Theorem \ref{thm_Gauss}.}

We will show that $\alpha\in S$ if and only if $T(\alpha)\in S$ which by iteration implies Theorem \ref{thm_Gauss}. By \cite{hauke_badly}, we have $S\subseteq\tilde{E}_6$ and $\tilde{E}_6$ is invariant under the Gauss map, hence we can assume without loss of generality that $\alpha$ is badly approximable. Actually we will prove the following stronger statement:

\begin{lem}\label{lemma_gauss}
    Let $P \in \N, \alpha\in E_P$ and let $N \in \N$ with Ostrowski expansion $N=\sum_{l=0}^n b_lq_l(\alpha)$. Writing $\alpha'=T(\alpha)$ and $N':=\sum_{l=1}^n b_lq_{l-1}(\alpha')$, we have
       \begin{equation}
\label{N_Nprime}    
P_N(\alpha)\asymp P_{N'}(\alpha'),
\end{equation}
with the implied constant only depending only on $P$.
\end{lem}

 Note that Lemma \ref{lemma_gauss} can be compared to estimates 
 in \cite[Section 4]{zag_conj} obtained for the case where $\alpha$ is not badly approximable, in which case $\frac{P_N(\alpha)}{P_{N'}(\alpha')}$ is not known to be bounded.
 Since $N\to N'$ is a bijection for $N>P$, Lemma \ref{lemma_gauss} implies Theorem \ref{thm_Gauss}.

 \begin{proof}[Proof of Lemma \ref{lemma_gauss}]
Recall that by Proposition \ref{prop_shifted}, we have

\[
P_N(\alpha) = \prod_{k=0}^{n}\prod_{t= 0}^{b_k-1} P_{q_k}\bigl(\alpha,\varepsilon_{k,t}(N)\bigr),\quad
P_{N'}(\alpha')=
\prod_{k=1}^{n}\prod_{t=0}^{b_k-1} P_{q_{k-1}(\alpha')}\bigl(\alpha',\varepsilon_{k-1,t}(N', \alpha')\bigr)
\]
and by Proposition \ref{limit_H} for any compact interval $I$,
\begin{equation}
\label{PkHk}
P_{q_k}(\alpha,\varepsilon) = H_k(\alpha,\varepsilon)\left(1 + \mathcal{O}\left(q_k^{-2/3}\log^{2/3}q_k\right)\right) + \mathcal{O}(q_k^{-2}),\quad \varepsilon \in I,
\end{equation}
with the implied constant only depending on $P$ and $I$. 

%
We recall that 
$$
H_k(\alpha,\varepsilon) =
2\pi \lvert \varepsilon + \lambda_k(\alpha) \rvert \prod_{n=1}^{\lfloor q_k/2\rfloor } h_{n,k}(\alpha,\varepsilon),
$$
where
\begin{equation*}
h_{n,k}(\alpha,\varepsilon) = \Bigg\lvert\Bigg(1 - \lambda_k\frac{\left\{n\cev{\alpha}_k\right\} - \frac{1}{2}}{n}\Bigg)^2 - \frac{\left(\varepsilon + \frac{\lambda_k}{2}\right)^2}{n^2}\Bigg\rvert.
\end{equation*}

Note that $q_{k-1}(\alpha')<q_k(\alpha)<(P+1)q_{k-1}(\alpha')$ and therefore $\forall \sigma\in\mathbb{R}$ all the quantities of the form $O(q_k(\alpha))^{\sigma}$ are $O(q_{k-1}(\alpha'))^{\sigma}$ and vice versa. Due to this fact, we will write $O(q_{k}^{\sigma})$ when it does not create ambiguity. We split the proof of \eqref{N_Nprime} into the following steps, where all the implied constants only depend on $P$:

\begin{itemize}
\item Step 1: For $k \geq 1$, $1 \leq n \leq \sqrt{q_k(\alpha)}, 0 \leq t \leq b_k -1$, we have
\begin{align}\label{convergence_eps}
 \varepsilon_{k,t}(N, \alpha)&=\varepsilon_{k-1,t}(N', \alpha')\bigl(1+O(1/q_k^2)\bigr),\\
\lambda_{k}(\alpha)&=\lambda_{k-1}(\alpha')\bigl(1+O(1/q_k^2)\bigr),\label{convergence_lambda}\\
\{n\cev{\alpha}_k\}&=\{n\cev{\alpha}'_{k-1}\}\bigl(1+O(1/q_k^{3/2})\bigr)\label{convergence_na}.\end{align}\\
\item Step 2:
For any $|\varepsilon|<1$,
\begin{equation}
\label{hk_large_factors}
\prod_{n=\lfloor\sqrt{q_k(\alpha)}\rfloor+1}^{\lfloor q_k(\alpha)/2\rfloor } h_{n,k}(\alpha,\varepsilon)=1+O\left(\frac{1}{q_k^{0.49}}\right), \quad 
\prod_{n=\lfloor\sqrt{q_k(\alpha)}\rfloor+1}^{\lfloor q_{k-1}(\alpha')/2\rfloor } h_{n,k-1}(\alpha',\varepsilon)=1+O\left(\frac{1}{q_k^{0.49}}\right).
\end{equation}
\\
\item Step 3:
\begin{equation}
\label{prod_before_sqrt}
\prod\limits_{n=1}^{\lfloor\sqrt{q_k(\alpha)}\rfloor}h_{n,k}(\alpha,\varepsilon)=\prod\limits_{n=1}^{\lfloor\sqrt{q_k(\alpha)}\rfloor}h_{n,k-1}(\alpha',\varepsilon)\left(1+O\left(\frac{1}{q_k^{1.49}}\right)\right). 
\end{equation}
\item Step 4: The functions $(H_k(\alpha,\varepsilon))_{k \in \N}$ are uniformly Lipschitz continuous on compact intervals $I$, with the Lipschitz constant only depending on $I$ and $P$.
\item Step 5:
\begin{equation}
\label{step4equiv}
H_k(\varepsilon_{k,t}(N, \alpha))=H_{k-1}(\varepsilon_{k-1,t}(N', \alpha'))\left(1+O\left(\frac{1}{q_k^{0.49}}\right)\right).
\end{equation}
\end{itemize}
Now we deduce \eqref{N_Nprime}. In the view of (\ref{PkHk}), one can write (\ref{step4equiv}) as
$$
P_{q_k}\bigl(\alpha,\varepsilon_{k,t}(N)\bigr)=
P_{q_{k-1}(\alpha')}\bigl(\alpha',\varepsilon_{k-1,t}(N', \alpha')\bigr)\left(1+O\left(\frac{1}{q_k^{0.49}}\right)\right).
$$
Note that all the Ostrowski digits $b_k$ do not exceed $P$, therefore
$$
P_N(\alpha)\asymp P_{N'}(\alpha')\prod_{k=1}^{n}
\left(1+O\left(\frac{1}{q_k^{0.49}}\right)\right)\asymp P_{N'}(\alpha').
$$

We start the proof of lemma with proving Step 1: Note that \eqref{convergence_eps} follows from the definition of $N'$ and the fact that 
$$
q_k(\alpha)\delta_{k+t}(\alpha)=q_{k-1}(\alpha')\delta_{k+t-1}(\alpha')\left(1+O\left(\frac{1}{q_{k}^2}\right)\right).
$$ 
\eqref{convergence_lambda} follows from basic properties of continued fractions, so we are left to prove \eqref{convergence_na}.
For this to hold, it suffices to show that $\lfloor n\cev{\alpha}_k\rfloor=\lfloor n\cev{\alpha}'_{k-1}\rfloor$. 
Note that 
$$
|n\cev{\alpha}_k-n\cev{\alpha}'_{k-1}|\ll \frac{1}{q_k^{3/2}},
$$
so if there is an integer between $n\cev{\alpha}_k$ and $n\cev{\alpha}'_{k-1}$ then $\|n\cev{\alpha}_k\|\ll 1/q_k^{3/2}$ which is impossible since $n\le\sqrt{q_k(\alpha)}$ and $\cev{\alpha}_k$ has partial quotients bounded by $P$.\\

Proof of Step 2: 
Using the Taylor estimate $\exp(x)=(1+x)(1+O(x^2))$, we obtain
\begin{equation}
\label{prod_after_sqrt}
\prod_{n=\lfloor\sqrt{q_k(\alpha)}\rfloor+1}^{\lfloor q_k(\alpha)/2\rfloor } h_{n,k}(\beta,\varepsilon)=\exp\left(2\sum\limits_{n=\lfloor\sqrt{q_k(\alpha)}\rfloor+1}^{\lfloor q_k(\alpha)/2\rfloor}\frac{1/2-\{n\cev{\alpha_k}\}}{n}+\frac{O(1)}{n^2}\right)\left(1+O\left(\frac{1}{q_k(\alpha)}\right)\right).
\end{equation}

The first estimate of \eqref{hk_large_factors} now follows by partial summation and (\ref{sum_from_Tplus1}) in the same way as in Lemma \ref{taylor_lem}, the second estimate in \eqref{hk_large_factors} follows by using $q_{k-1}(\alpha') \asymp q_k(\alpha)$ and identical arguments.\\

Proof of Step 3:
From the estimates (\ref{convergence_lambda}) and (\ref{convergence_na}) for any $k \ge 1$, $1\le n\le \sqrt{q_k(\alpha)}$ we have 
\begin{equation}
h_{n,k}(\alpha,\varepsilon) =h_{n,k-1}(\alpha',\varepsilon)\left(1+O\left(\frac{1}{nq_k^{3/2}}\right)\right).
\end{equation}
Multiplying the equality above for $n=1,2,\ldots,\lfloor \sqrt{q_k}\rfloor$, we obtain (\ref{prod_before_sqrt}).\\

Proof of Step 4:
Note that $H_k(\alpha,\varepsilon)$ is everywhere differentiable except at its zeros.
Since $H_k$ is non-negative, it suffices to show the Lipschitz property between two such zeros,
thus without loss of generality, $I$ is differentiable in its interior.
Hence we can bound the derivative by

\[
\lvert H_k(\alpha,\varepsilon)'\rvert  \leq 2\pi \prod_{\substack{n = 1}}^{q_k/2}h_{n,k}(\varepsilon)
 + 2\pi \lvert \varepsilon + \lambda_k\rvert \lvert 2\varepsilon + \lambda_k\rvert \sum_{m \leq q_k/2} \frac{1}{m^2} \prod_{\substack{n = 1\\n \neq m}}^{q_k/2}h_{n,k}(\varepsilon).
\]

Note that $\prod\limits_{\substack{n = 1\\n \neq m}}^{q_k/2}h_{n,k}(\varepsilon) \ll_{I,P} 1$ uniformly in $m$ and $k$
and the same holds for $\prod\limits_{\substack{n = 1}}^{q_k/2}h_{n,k}(\varepsilon)$,
facts that can be proven analogously to Step 2. Since also $\lambda_k$ is uniformly bounded, 
it follows that $\lvert H_k(\alpha,\varepsilon)'\rvert \ll_{I,P} 1$
which implies $H_k$ being uniformly Lipschitz continuous.\\

Proof of Step 5:
Due to (\ref{hk_large_factors}) and (\ref{prod_before_sqrt}) for any $|\varepsilon|<1$ one has
\begin{equation}
\begin{split}
H_k(\alpha,\varepsilon) &=
2\pi \lvert \varepsilon + \lambda_k(\alpha) \rvert \prod_{n=1}^{\lfloor q_k(\alpha)/2\rfloor } h_{n,k}(\alpha,\varepsilon)=
2\pi \lvert \varepsilon + \lambda_{k-1}(\alpha') \rvert
\prod_{n=1}^{\lfloor q_{k-1}(\alpha')/2\rfloor } h_{n,k-1}(\alpha',\varepsilon)\left(1+O\left(\frac{1}{q_k^{0.49}}\right)\right)\\&=H_{k-1}(\alpha',\varepsilon)\left(1+O\left(\frac{1}{q_k^{0.49}}\right)\right).
\end{split}    
\end{equation}
As the functions $H_k(\alpha,\varepsilon)$ are uniformly Lipschitz-continuous, taking into account (\ref{convergence_eps}), we obtain (\ref{step4equiv}).
 \end{proof}

\subsection*{Acknowledgements} 
We would like to thank Daniel El-Baz for making us aware of Ostrowski's estimate (\ref{Ostr_est}). We are thankful to Christoph Aistleitner and Bence Borda for helpful comments on
an earlier version of this manuscript. DG was supported by the Austrian Science Fund (FWF) Projects I-5554 and P-34763. 
MH was supported by the Austrian Science Fund (FWF) Project P-35322 and by the EPSRC grant EP/X030784/1.


\begin{thebibliography}{HD82}


\bibitem{quantum_invariants}
 C. Aistleitner, B. Borda,
 {\it Quantum invariants of hyperbolic knots and extreme values of
trigonometric products.} Math. Z. 302 (2022), 759–782.

\bibitem{zag_conj}
C. Aistleitner, B. Borda,
{\it A conjecture of Zagier and the value distribution of quantum modular forms.}
arXiv:2110.07407.

\bibitem{other_aist_borda}
C. Aistleitner, B. Borda,
{\it Maximizing Sudler products via Ostrowski expansions and cotangent sums.}
Algebra \& Number Theory 17 (2023), 667–717.

\bibitem{tech_zaf}
C. Aistleitner, N. Technau, A. Zafeiropoulos,
      {\it On the order of magnitude of Sudler products},
Amer. J. Math.  145 (2023),	721-764.

\bibitem{all_shall}
J. Allouche, J. Shallit,
{\it Automatic Sequences: Theory, Applications, Generalizations.} 
Cambridge University Press, 2003.

\bibitem{atkinson}
F.V. Atkinson,
{\it On a problem of Erdös and Szekeres.}
Canad. Math. Bull., 4 (1961), 7–12.

\bibitem{aj}
 A. Avila, S. Jitomirskaya, 
 {\it The Ten Martini Problem.} 
 Ann. of Math. (2) 170 no. 1 (2009), 303–342.
 
 \bibitem{bbr}
 J.P. Bell,  P.B. Borwein, L.B. Richmond, 
 {\it Growth of the product $\prod_{j=1}^{n}(1 -x^{a_j})$.} Acta Arith. 86 (1998),
 no. 2,  155–170.
 
 \bibitem{bell}
 J. Bell,
 {\it Estimates for the norms of sines and cosines.}
 J. Math. Anal. Appl. 405 (2013), no. 2, 530–545.

 
 \bibitem{bd1}
 S. Bettin, S. Drappeau, 
 {\it Limit laws for rational continued fractions and value distribution of quantum
modular forms.} 
Proc. Lond. Math. Soc. 125 no.6 (2022), 1377-1425.

\bibitem{bd2}
 S. Bettin, S. Drappeau, 
 {\it Modularity and value distribution of quantum invariants of hyperbolic knots.}
 Math. Ann. 382 (2022), 1631-1679.
 
 \bibitem{bd3}
  S. Bettin, S. Drappeau,
  {\it Partial sums of the cotangent function.} J. Th{\'e}or. Nombres Bordeaux 32 (2020), 217-230.
  
  \bibitem{bglly}
  C. Billsborough, S. Gold, E. Linder, D. Lubinsky, J. Yu, {\it Average growth of Lp norms of Erdős–Szekeres polynomials.} Acta Math. Hungar. 166, 179–204 (2022).
  
  
  \bibitem{borda}
  B. Borda,
  {\it On the distribution of Sudler products and Birkhoff sums for the irrational rotation.}
  Ann. Inst. Fourier (Grenoble), to appear; pre-print: arXiv:2104.06716.
  
  
  \bibitem{bc}
 J. Bourgain, M.C. Chang, 
 {\it On a paper of Erdöss and Szekeres.}
 J. d’Anal. Math., 136 (2018), no. 1,
253–271.
  
  \bibitem{erdos_szekeres}
P. Erdős, G. Szekeres,
{\it On the product $\prod_{k=1}^{n}(1 - z^{\alpha_k})$.} Acad. Serbe Sci.
Publ. Inst. Math. 12 (1958), 29–34.

\bibitem{fh}
  G. Freiman, H. Halberstam, 
  {\it On a product of sines.} 
  Acta Arith., 49 (1988), no. 4, 377–385.
  
  \bibitem{grepstad_survey}
S. Grepstad, L. Kaltenböck, M. Neumüller,
  {\it On the asymptotic behaviour of the sine product $\prod_{r=1}^n|2\sin(\pi r \alpha)|$.}
  In D. Bilyk, J. Dick and F. Pillichshammer, Discrepancy Theory (2020).

\bibitem{grepstad_neum}
S. Grepstad, L. Kaltenböck, M. Neumüller, 
{\it A positive lower bound for 
$\liminf_{N \to \infty}\prod_{r=1}^{N} 2\lvert \sin \pi r\phi\rvert$.} Proc.
Amer. Math. Soc. 147 (2019), 4863–4876.

\bibitem{grepstadII}
S. Grepstad, M. Neumüller, A. Zafeiropoulos,
{\it On the order of magnitude of Sudler products II.}
Int. J. Number Theory 19 no.5 (2023), 955-996.

\bibitem{hauke_density}
M. Hauke,
{\it Metric density results for the value distribution of Sudler products.}
 Proc. Am. Math. Soc. 151 (2023), 2339-2351.
 
 \bibitem{hauke_extreme}
 M. Hauke,
 {\it On extreme values for the Sudler product of quadratic irrationals.}
Acta Arith. 204 no.1 (2022), 41–82. 

\bibitem{hauke_badly}
M. Hauke,
{\it On the asymptotic behaviour of Sudler products for badly approximable numbers.} J. Math. Anal. Appl. 531 no.1 (2024), 127737.

\bibitem{jenk}
O. Jenkinson, {\it On the density of Hausdorff dimensions of bounded type continued fraction sets: The Texan conjecture.} Stochastics and Dynamics 04 no.1 (2004), 63–76. 

\bibitem{kam}
O. Knill, F. Tangerman,
{\it Self-similarity and growth in Birkhoff sums for the golden rotation.}
Nonlinearity 24 (2011), 3115\,--\,3127.

\bibitem{koch}
H. Koch,
{\it Universal supercritical behavior for some skew-product maps.}
Discrete Contin. Dyn. Syst.43(2023), no.1, 256–275.

\bibitem{kol}
M. N. Kolountzakis, 
{\it A construction related to the cosine problem.} Proc. Amer. Math. Soc. 122 (1994),
no. 4, 1115–1119.

\bibitem{kol2}
 M. N. Kolountzakis, 
 {\it The density of Bh[g] sets and the minimum of dense cosine sums.} J. Number
Theory 56 (1996), no. 1, 4–11.

\bibitem{khinchin}
 A. Ya. Khinchin, 
 {\it Continued fractions.} Dover (1997).

 \bibitem{lubinsky}
 D. Lubinsky, 
 {\it The size of $(q; q)_n$ for $q$ on the unit circle}, J. Number
Theory 76 no.2 (1999), 217–247.

 \bibitem{lubinsky_pade}
  D. Lubinsky,
  {\it Rogers–Ramanujan and the Baker\,--\,Gammel\,--\,Wills (Padé) conjecture.} Ann. of Math.
157 (2003), 847\,--\,889.
      
       \bibitem{mestel}
B. Mestel, P. Verschueren, 
{\it Growth of the Sudler product of sines at the golden rotation
number.} J. Math. Anal. Appl. 433 no.1 (2016), 200–226.
      
      \bibitem{ostrowski}
A. Ostrowski, 
{\it Bemerkungen zur Theorie der Diophantischen Approximationen.} Abh. Math. Semin. Univ. Hambg. 1 (1922), 77–98.
    

       
		\bibitem{povy}
  M. Pollicott, P. Vytnova,
\textit{Hausdorff dimension estimates applied to Lagrange and Markov spectra, Zaremba theory, and limit sets of Fuchsian groups.} Trans. Amer. Math. Soc. Ser. B9(2022), 1102–1159.


       
       \bibitem{rock_sz}
 A. M. Rockett, P. Sz\"usz,
 {\it Continued fractions.} World Scientific Publishing, River Edge, NJ,
1992.
      
       
       \bibitem{schmidt}
 W. M. Schmidt,
 {\it Diophantine approximation.} Lecture Notes in Mathematics, 785. Springer, Berlin,
1980.

\bibitem{shallit}
J. Shallit, {\it Simple continued fractions for some irrational numbers.} J. Number Theory 11 no.2 (1979), 209–217.

\bibitem{sudler}
 C. Sudler Jr.,
 {\it An estimate for a restricted partition function.} Quart. J.
Math. Oxford Ser. 15 (1964), 1–10.

\bibitem{versch}
 P. Verschueren, {\it Quasiperiodic Sums, Products and Composition Sum Operators}, PhD thesis,
The Open University, 2016.

\bibitem{wright}
 E. M. Wright,
 {\it Proof of a conjecture of Sudler’s} Quart. J. Math. Oxford
Ser. 15 no.2 (1964), 11–15.




\end{thebibliography}
\end{document}